\date{\today}
\numberwithin{equation}{section}
\newtheorem{theorem}{Theorem}[section]
\newtheorem{proposition}{Proposition}[section]
\theoremstyle{definition}
\newtheorem{remark}{Remark}[section]
\DeclareMathOperator{\supp}{supp}
\DeclareMathOperator{\sinc}{sinc}
\DeclareMathOperator{\sign}{sign}
\newcommand{\eps}{\varepsilon}
\newcommand{\g}{ f_0}
\newcommand{\R}{\mathbb{R}}
\renewcommand{\r}[1]{{\eqref{#1}}}
\newcommand{\be}[1]{\begin{equation}\label{#1}}
\newcommand{\ee}{\end{equation}}
\newcommand{\p}{{\partial}}
\renewcommand{\d}{\mathrm{d}}
\renewcommand{\i}{\mathrm{i}}
\newcommand{\bo}{\partial \Omega}
\newtheorem{example}{Example}[section]
\title[Recovery of a general nonlinearity in the semilinear wave equation]{Recovery of a general  nonlinearity in the semilinear wave equation}
\author[A. S\'a Barreto]{Ant\^ onio S\'a Barreto}
\author[P. Stefanov]{Plamen Stefanov}
\address{Department of Mathematics, Purdue University, West Lafayette, IN 47907}
\thanks{The first author is partly supported by the Simons Foundation grants \#349507 and  \#848410. 
The second author is partly supported by the  NSF Grant DMS-1900475.}
\begin{document}
 \begin{abstract}
We study the inverse problem of recovery a  non-linearity $f(x,u)$, 
which is compactly supported in $x$, in the semilinear wave equation $u_{tt}-\Delta u+ f(x,u)=0$. We probe the medium with either complex or real-valued harmonic waves of wavelength $\sim h$ and amplitude $\sim 1$.  They propagate in a regime where the non-linearity affects the subprincipal but not the principal term, except for the zeroth harmonics. We measure the transmitted wave when it exits $\supp_x f$. We show that one can recover $f(x,u)$ when it is an odd function of $u$, and we can recover $\alpha(x)$ when $f(x,u)=\alpha(x)u^{2m}$. This is done in an explicit way as $h\to0$.
\end{abstract} 
\maketitle

\section{Introduction}  
Consider the semilinear wave equation
 \be{1}
u_{tt}-\Delta u+ 
f(x,u)=0,\quad (t,x)\in \R_t\times\R_x^n.
\ee
We assume that $f$ is smooth and compactly supported in the $x$ variable.

The problem we study is whether we can recover $f(x,u)$  
for all $x$ and $u$ from remote measurements. We show that this can be done in an explicit way when $f$ is an odd function of $u$, which covers the case 
\be{1odd}
f(x,u) = \g(x,|u|^2)u
\ee
 studied extensively in the literature. Without that assumption, we show that we can recover $\alpha(x)$ explicitly  when $f(x,u)=\alpha(x)|u|^m$, with $m$ (an even) integer. We probe the non-linearity with high-frequency incident waves of wavelength $\sim h$, $0<h\ll1$, and look at the asymptotic expansion of the wave at some exit time $t=T$ as $h\to0$. We take both complex and real incident waves. The real case is harder and perhaps more relevant for applications. While solvability with not necessarily small initial conditions is not guaranteed unless we make additional assumptions on $f$, we show that \r{1} is solvable for $t\in [0,T]$ with the waves we use the probe the medium.

In the works on inverse problems for non-linear hyperbolic PDEs so far, it is usually assumed that $u$ is small: one takes an asymptotic expansion of a solution with initial conditions $u_0=\eps_1 u_1+\eps_2 u_2 +\eps_3 u_3 +\eps_4 u_4$ chosen so that their weak singularities collide at a chosen point at a chosen time. Then one takes the limit as all $\epsilon$'s tend to zero. The information about the non-linearity is extracted from the $\eps_1\eps_2 \eps_3\eps_4$ term in the asymptotic expansion, which has a certain weak singularity; this method is sometimes called \textit{higher order linearization}. This can and does provide information about the Taylor expansion of $f(x,u)$ w.r.t.\ $u$ at $u=0$ only, see \cite{LassasUW_2016}. 
In \cite{OSSU-principal}, one has three waves instead of four. 
An exception is the work \cite{Antonio2020inverse}, where $\g$ is $x$-independent (and the nonlinearity $f(x,u)$ is critical, $\sim |u|^4u$, $n=3$); then $f$ is a similar perturbation of a fixed non-zero solution $f_0$ rather than a perturbation of the zero one. Recently in \cite{S-Antonio-nonlinear}, the authors studied a cubic non-linearity $\g(x,|u|^2)u=\alpha(x)|u|^2u$ and proposed using non-small solutions producing an exit signal of magnitude comparable to the one of the incident field. The solution then is $u\sim h^{-1/2}$, and  propagates in the weakly nonlinear regime. We showed that the propagating wave has a phase shift in the principal term which is proportional to the X-ray transform of $\alpha$ along the characteristic rays; then one can recover $\alpha$ from that information. The method in \cite{S-Antonio-nonlinear} can be extended to $f(x,u)$ having that type of cubic asymptotic behavior as $|u|\to\infty$. Aside from those two extremes, $u\to0$ or $u\to\infty$, the inverse problem remained open to author's best knowledge, see also section~\ref{sec_discussion}. 

As we emphasized above, methods based on asymptotically small solutions can only recover the non-linearity at $u=0$, and possibly its Taylor expansion there, see for example \cite{Hintz1,Hintz2,LassasUW_2016, LUW1, lassas2020uniqueness}. 
Moreover, they propagate essentially linearly, where the non-linearity is negligible. To recover the non-linearity away from $u=0$, we need non-small solutions; and we need a problem allowing such solutions. The main idea of this work is to use high frequency solutions $u\sim 1$ (in the $L^\infty$ norm). When $f$ is an odd function of $u$ as in \r{1odd}, that would put us in the principally linear geometric optics regime (see \cite{Metivier-Notes} and section~\ref{sec_discussion}) and the non-linearity would affect the sub-principal term.   The general case is more delicate and does affect the principal term. If the principal part is  $u\sim \chi(\phi)e^{\i\phi/h}$, with $\phi=-t+x\cdot\omega$, and $0\le \chi\in C_0^\infty$, then its modulus is $|\chi(\phi)|$, and in the PDE, we would have $\g(x,\chi^2)u$ modulo $O(h)$. So the non-linearity would act effectively as a time-dependent potential $V(x)=\g (x,\chi^2(\phi))$, which can be recovered from near-field scattering data by means of its X-ray transform. In other words, once we justify that claim, we get an essentially an  inverse problem for the linear wave equation $\Box u+V(t,x)u=0$, which is well studied. Note that this is not a linearization. The X-ray transform of $V$ is contained in the subprincipal term of the exit wave. Thus we can recover $\g(x,p)$ for every $p$ in the range of $\chi^2$.  

Assume we have real incident fields like $u\sim \chi(\phi) \cos(\phi/h)$; then the situation is quite different. This is a solution of the free (linear) wave equation, and would be the principal part of the non-linear solution when $f$ is of the type \r{1odd}. Its square is not $\chi^2(\phi)$ anymore. 
The ``effective potential'' $V=\g(x,\chi(\phi)^2\cos^2(\phi/h))$ would be highly oscillatory, and we need to expand it (multiplied by $u$) in Fourier modes first. This leads to expanding the function $\g(x,M^2q^2)q$ in Chebyshev polynomials over the interval $|q|\le 1$, see \r{R3} below. Then we show that the wave develops (odd) harmonics and each has an amplitude proportional to the X-ray transform of the corresponding Chebyshev coefficient. This allows us to recover those coefficients and ultimately, $\g$. We also show that the first harmonic only is enough: it leads to an Abel equation, see \r{R9}, allowing us to recover $\g$. 

The case of general $f$ not necessarily an odd function of $u$ is more delicate. We study real incident waves only. Then the self-interaction of the wave creates a  zeroth harmonic $u^{(0)}$, which is known in the physics literature as rectification.  Then  $u^{(0)}$ affects the principal symbol, in fact, and its principal part solves another semi-linear wave equation with a non-linearity depending on $f$, see \r{E12}. The zeroth harmonic  $u^{(0)}$ mixes (interacts) with the non-zero ones and affects all the frequencies of the exit signal. For this reason, it is not clear how to extract information about $f$ from them. When $f$ is a polynomial in $u$, with $x$ dependent coefficients, the highest harmonic does not depend on  $u^{(0)}$, and we can actually recover the principal term of that polynomial; in particular, we can recover $\alpha_m$ when $f=\alpha_m(x) u^m$. 

We refer to section~\ref{sec_discussion} for a further discussion of relevant works on inverse problems for semi-linear hyperbolic PDEs and non-linear geometric optics. 

\smallskip
\textbf{Setup and main results.} 
We describe our setup now. Assume $\supp_x\g\subset B(0,R)=\{x;\; |x|<R\}$. We take $n=2$ or $n=3$. 
We are probing the medium with
\be{10a}
u_\textrm{in}^\mathbb{C}= e^{\i (-t+x\cdot\omega)/h} \chi(-t+x\cdot\omega), \quad \omega\in S^{n-1}. 
\ee
Assume $C_0^\infty \ni \chi\ge0$ and let $\delta>0$ be such that  $\supp\chi\subset (-\delta, \delta)$. We solve \r{1} with initial condition
\be{1c}
u=u_\text{in}^\mathbb{C}\quad \text{for $t<-R-\delta$}
\ee
which solves the free wave equation for such $t$. We measure 
\be{2a}
\Lambda(u_\text{in}^\mathbb{C})(x,\omega,h) = u|_{t=T, \; |x\cdot\omega-T|\le\delta},
\ee
where $T>R+\delta$ is fixed and   $u$ is the unique bounded solution. 

Our first main theorem is the following. 

\begin{theorem}\label{thm_main} 
Let the complex  $u_\text{\rm in}^\mathbb{C}$ be as in \r{10a}, and let $K=\max\chi$. 
Assume that $f$ is of the form \r{1odd} for $|u|\le K$ with $\g$ smooth.  Then, for $h\ll1$,  there is a unique bounded solution to \r{1}, \r{1c}  defined for $t\in [0,T]$. Moreover,

(a)  
\be{main_eq}
\begin{split}
\Lambda(u_\text{in}^\mathbb{C})  &= e^{\i (-T+x\cdot\omega)/h} \chi(-T+x\cdot\omega) \\
&\quad \times \left(  
1+ \i \frac{h}2\int  \g\big(x+s\omega,\chi^2(-T+x\cdot\omega) \big)\,\d s 
\right) +O(h^2). 
\end{split}
\ee

(b) For every fixed $\omega\in S^{n-1}$, the second term in the asymptotic expansion of $\Lambda(u_\text{\rm in}^\mathbb{C})$ as $h\to0$  recovers the X-ray transform of $x\mapsto \g(x,p)$  at the direction $\omega$, for every $p\in [0,K^2]$. 

(c) $\Lambda(u_\text{\rm in}^\mathbb{C})$, known for all unit $\omega$, all $0<h\ll1$, recovers $\g(x,p)$ uniquely for all $x$ and $p\in [0,K^2]$. 
\end{theorem}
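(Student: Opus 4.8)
The plan is to construct the solution $u$ via a WKB/geometric optics ansatz adapted to the weakly nonlinear regime, then read off the exit data. First I would write $u = e^{\i\phi/h}a(x,t;h)$ with phase $\phi = -t+x\cdot\omega$ (which solves the eikonal equation for the flat d'Alembertian, so $\phi$ is characteristic and the transport operator is $2\p_\phi$ along the ray direction), and expand the amplitude $a = a_0 + h a_1 + O(h^2)$. Because $f$ is odd in $u$ of the form \r{1odd}, substituting the ansatz and using $|u|^2 = |a_0|^2 + O(h)$ shows the nonlinearity contributes only $\gzero(x,|a_0|^2)\, e^{\i\phi/h} a_0$ at order $h^0$ relative to the subprincipal term, i.e.\ it does not enter the eikonal equation and does not enter the $a_0$ transport equation. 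Hence $a_0$ solves the free transport equation $\p_t a_0 = \omega\cdot\nabla a_0$ with data $\chi(\phi)$, giving $a_0 = \chi(-t+x\cdot\omega)$ for all $t$, exactly as in the linear case; in particular $|a_0|^2 = \chi^2(\phi)$ is constant along rays.

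Next I would solve the $a_1$ transport equation. Collecting the $O(h)$ terms in $\Box u + f(x,u) = 0$ produces a transport equation of the form $2(\p_t - \omega\cdot\nabla) a_1 = \i \gzero(x,\chi^2(\phi))\, a_0$ (up to sign conventions), which is a linear inhomogeneous ODE along the rays $s\mapsto x+s\omega$ with zero initial data for $t< -R-\delta$. Integrating along the ray from before $\supp_x\gzero$ up to the exit time $t=T$ gives $a_1|_{t=T} = \tfrac{\i}{2}\,\chi(-T+x\cdot\omega)\int \gzero\big(x+s\omega,\chi^2(-T+x\cdot\omega)\big)\,\d s$, where the argument of $\chi^2$ is frozen at its value on the ray (a constant of motion), which is precisely the bracket in \r{main_eq}. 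This establishes part (a); part (b) is then immediate because, after dividing the subprincipal coefficient by the known factor $\tfrac{\i}{2}\chi(-T+x\cdot\omega)$ and specializing to points where $\chi(-T+x\cdot\omega)=\sqrt p$, the integral $\int \gzero(x+s\omega,p)\,\d s$ is exactly the X-ray transform of the function $x\mapsto \gzero(x,p)$ in direction $\omega$. Part (c) follows from inverting the X-ray transform: $\gzero(\cdot,p)$ is smooth and compactly supported in $x$, so for $n=2,3$ it is determined by its X-ray transform over all lines (all $\omega$ and all line positions, the latter encoded by letting $x$ range over the measurement hyperplane), uniquely for every fixed $p\in[0,K^2]$.

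Two technical points require care and one of them is the main obstacle. The routine-but-necessary point is solvability: I must show the nonlinear problem \r{1}, \r{1c} actually has a unique bounded solution on $[0,T]$ for small $h$, not merely a formal asymptotic series. The plan is to truncate the WKB series at order $h^2$ to get an approximate solution $u_{\mathrm{app}}$ with residual $O(h^2)$ in a suitable norm, then set $u = u_{\mathrm{app}} + r$ and solve the resulting equation for $r$ by a fixed-point/contraction argument in a space like $C([0,T];H^s)$ (or an energy estimate), using that $f$ is smooth, that $u_{\mathrm{app}}$ stays in the region $|u|\le K$ where the hypothesis on $f$ applies, and that the Duhamel operator for $\Box$ gains enough to absorb the $h$-dependence; this forces $r = O(h^2)$ and upgrades the formal expansion to a genuine one, pinning down the error term in \r{main_eq}.

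The genuinely delicate step — the main obstacle — is controlling the nonlinear feedback in the amplitude equations over a time interval of fixed length $T$ while the amplitude is of size $\sim 1$. One must verify that the contributions of $\chi$, and in particular the possible vanishing of $\chi$ (so that $a_0$ can hit $0$, where $\gzero(x,|a_0|^2)=\gzero(x,0)$ is still smooth), do not create loss of regularity or secular growth in $a_1$, and that no resonances among harmonics occur: since $f$ is odd, the ansatz $e^{\i\phi/h}a$ is self-consistent and only the first harmonic is generated, so there is genuinely no coupling to other modes — this is exactly the ``principally linear'' feature invoked in the introduction — but this must be checked rather than assumed. Once the closed transport system for $(a_0,a_1)$ is seen to be triangular (free transport for $a_0$, then linear inhomogeneous transport for $a_1$ with a smooth, compactly supported source), there is no obstruction, and the three conclusions follow as above.
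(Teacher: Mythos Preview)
Your approach is essentially the paper's: same single-phase ansatz $u=e^{\i\phi/h}a$ with $\phi=-t+x\cdot\omega$, same transport hierarchy giving $a_0=\chi(\phi)$ and $a_1$ as a ray integral of $\g(x,\chi^2(\phi))$, and the same recovery of $\g(\cdot,p)$ via the X-ray transform for each $p$ in the range of $\chi^2$.

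One point deserves sharpening. You propose to ``truncate the WKB series at order $h^2$'' and claim a residual $O(h^2)$ ``in a suitable norm,'' but the paper observes that with $u_1=e^{\i\phi/h}(a_0+ha_1)$ the residual $P(u_1)$ is only $O(h)$ in $L^\infty$, and each $x$- or $t$-derivative costs a factor $h^{-1}$. The stability estimate one needs (source in $H^1$ to control the error in $H^2$, hence in $L^\infty$ for $n=2,3$) then yields only $u-u_1=O(h)$, not $O(h^2)$. The paper fixes this by continuing the construction to an arbitrary finite order $N$, so that the residual is $O(h^{N-s})$ in $H^s$; it then cuts off $f$ outside $|u|\le K+\eps$, applies a global existence theorem for the cut-off problem and a stability estimate to get $u-u_N=O(h^{N-1})$ in $H^2$, and finally removes the cut-off once $\|u\|_{L^\infty}\le K+O(h)$ is established. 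Your fixed-point/contraction scheme would confront the same derivative loss; the cure is the same higher-order expansion plus cut-off, not a different idea, but it should be stated rather than left implicit in ``a suitable norm.''

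Finally, what you flag as the ``main obstacle'' (nonlinear feedback, resonances, harmonics) is in fact a non-issue in the complex case: with the odd form $\g(x,|u|^2)u$ and a single complex phase, $|u|^2=|a|^2$ carries no phase at all, so the nonlinearity acts exactly as a time-dependent potential $\g(x,|a_0|^2)$ and no higher harmonics are generated. The transport hierarchy is triangular from the outset, and the paper treats this as routine. (Your sign in the $a_0$ transport equation is off: it should read $(\partial_t+\omega\cdot\nabla)a_0=0$, consistent with $a_0=\chi(-t+x\cdot\omega)$.)
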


Rescaling $\chi$ allows us to recover $\g$ for all $x$, $p$ if $f$ is of the type \r{1odd} for all $u$.

We turn our attention now to the  recovery of $f$ with a real-valued incident wave
\be{10real}
u_\textrm{in}^\mathbb{R}= \cos\frac{-t+x\cdot\omega}{h} \chi(-t+x\cdot\omega), \quad \omega\in S^{n-1},
\ee
which is just the real part of \r{10a}, modeling problems where $u$ must be real. Then  the solution will stay real. Consider the odd case \r{1odd} first, then  $f(x,u) = \g(x,|u|^2)u=\g(x,u^2)u$. The incident  wave is a linear combination of waves $\chi(\phi) e^{\i k\phi/h} $ with   $k=-1,1$, and plugging this in the non-linearity would create higher order \textit{odd only} harmonics. To compute them, we expand the principal part of the non-linear term $\g\big(x,\cos^2(\phi/h)\chi^2(\phi)\big) \chi(\phi)\cos(\phi/h)$ into Fourier cosine series in the $\phi/h$ variable, see \r{R1} below. This leads naturally to an expansion of $\g(x,M^2q^2)q$ into Chebyshev polynomials over $q\in[-1,1]$
with  Chebyshev coefficients  
\be{R3}
\gamma_m(x,M) = \frac4\pi \int_{0}^1  \left[\g\big(x,M^2 q^2 \big) q\right] T_m(q) \frac{\d q} {\sqrt{1-q^2}}, 
\ee
where $T_m$ are the Chebyshev polynomials of first kind, and $\gamma_m(x,M)=0$ for $m$ even. Here $M$ is a parameter which will eventually be replaced by $\chi(-T+x\cdot\omega)$. 

\begin{theorem}\label{thm_main_R} 
Let $f$ be odd for $|u|\le K$ as in Theorem~\ref{thm_main}. 
Let the real  $u_\text{\rm in}^\mathbb{R}$ be as in \r{10real}. Then, for $h\ll1$,  there is a unique bounded solution to \r{1} with initial condition \r{10real} for $t<-R-\delta$,  defined for $t\in [0,T]$. Moreover, 

(a)  
\be{R8T}
\begin{split}
\Lambda(u_\text{in}^\mathbb{R}) &=  \cos\frac{-T+x\cdot\omega}{h} \chi(-T+x\cdot\omega)\\
&\quad + h\chi(-T+x\cdot\omega)\sum_{k\ge1,\,\textrm{\rm odd} } \frac1{2k} \sin\frac{k( -T+x\cdot\omega)}{h} X\gamma_k 
+O(h^2),
\end{split}
\ee
where 
\be{R8a}
X\gamma_k(x,\omega)  = \int \gamma_k\big(x+\sigma\omega, \chi(-T+x\cdot\omega) \big)\, \d\sigma. 
\ee

(b) The non-linearity $\g(x,p)$ is uniquely determined by  the second term above in the asymptotic expansion of $\Lambda(u_\text{\rm in}^\mathbb{R})$ as $h\to0$,  for all $x$ and $p\in[0,K^2]$. 
\end{theorem}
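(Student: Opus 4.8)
The plan is to run a multi-harmonic geometric-optics (WKB) construction, read off the transport equations harmonic by harmonic, and then upgrade the formal expansion to a statement about the genuine solution by the same perturbation argument used for Theorem~\ref{thm_main}. Write $\phi=-t+x\cdot\omega$. Since $\phi$ is a characteristic phase for $\Box=\p_t^2-\Delta$, the eikonal equation holds identically and $\Box(e^{\i k\phi/h}a)=e^{\i k\phi/h}\big(-\tfrac{2\i k}{h}Ta+\Box a\big)$, where $Ta:=(\p_t+\omega\cdot\nabla_x)a$ is the derivative of $a$ along the null rays $\sigma\mapsto(t+\sigma,\,x+\sigma\omega)$; note $T\phi=0$, so $\phi$ is ray-constant. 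I would seek $u=\sum_{k\ \textrm{odd}}e^{\i k\phi/h}a_k$ with $a_{-k}=\overline{a_k}$ (so $u$ is real) and $a_k=a_k^{(0)}+h\,a_k^{(1)}+O(h^2)$. Restricting to odd harmonics is self-consistent precisely because $f$ is odd: if $u$, as a function of the fast variable $\theta=\phi/h$, is anti-periodic, $u(\theta+\pi)=-u(\theta)$, then $f(x,u(\theta+\pi))=f(x,-u(\theta))=-f(x,u(\theta))$ is anti-periodic too, so $f(x,u)$ carries no even harmonics — in particular no zeroth harmonic, i.e.\ no rectification. Matching the incident data \r{10real} for $t<-R-\delta$ forces $a_{\pm1}^{(0)}=\tfrac12\chi(\phi)$ and $a_k^{(0)}=0$ for $|k|\ge3$.

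The Chebyshev coefficients enter through the nonlinear term. To leading order $u\simeq\chi(\phi)\cos(\phi/h)$, so $f(x,u)=\g(x,u^2)u\simeq\g\big(x,\chi^2(\phi)\cos^2(\phi/h)\big)\chi(\phi)\cos(\phi/h)$; with $q=\cos(\phi/h)$ and $M=\chi(\phi)$ this equals $M\big[\g(x,M^2q^2)q\big]$, and expanding the odd, smooth function $q\mapsto\g(x,M^2q^2)q$ on $[-1,1]$ in Chebyshev polynomials produces exactly the coefficients $\gamma_m(x,M)$ of \r{R3}, vanishing for $m$ even. Hence the $e^{\i k\phi/h}$-component of $f(x,u)$ equals $\tfrac12\chi(\phi)\gamma_k(x,\chi(\phi))+O(h)$ for every odd $k$. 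Substituting the ansatz into $\Box u+f(x,u)=0$ and separating harmonics and powers of $h$: the $O(1/h)$ equations give $Ta_k^{(0)}=0$ (automatic, since each $a_k^{(0)}$ is a multiple of $\chi(\phi)$), and the $O(1)$ equations give $Ta_k^{(1)}=\tfrac1{4\i k}\chi(\phi)\gamma_k(x,\chi(\phi))$ for all odd $k$, where the terms $\Box a_k^{(0)}=\Box(\mathrm{const}\cdot\chi(\phi))$ drop out because $\phi$ is characteristic. Integrating along the backward ray from the exit point $(T,x)$ — using $a_k^{(1)}\to0$ as $t\to-\infty$, that $\phi$ is ray-constant, and that $\gamma_k(\cdot,M)$ is compactly supported in $x$ (inherited from $\g$, and $T>R+\delta$ so the forward ray has already left $\supp_x\g$) — yields $a_k^{(1)}\big|_{t=T}=-\tfrac{\i}{4k}\chi(-T+x\cdot\omega)\,X\gamma_k(x,\omega)$ with $X\gamma_k$ as in \r{R8a}. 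Finally $e^{\i k\phi/h}a_k+e^{-\i k\phi/h}a_{-k}=2\,\mathrm{Re}\,(e^{\i k\phi/h}a_k)$ contributes the principal term $\chi\cos(\phi/h)$ from $k=\pm1$ and the correction $\tfrac{h}{2k}\chi\,\sin(k\phi/h)\,X\gamma_k$ from each odd $k\ge1$, which is precisely \r{R8T}.

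To make this rigorous I would proceed exactly as for Theorem~\ref{thm_main}: build an approximate solution $U$ from the ansatz carried to order $h^2$ and truncated at a high harmonic — the truncation is harmless because smoothness of $\g$ forces rapid decay of the $\gamma_k$ — so that $\Box U+f(x,U)=O(h^2)$ in the energy norm, with $U$ bounded, explicit, and compactly supported for each $t$. Then set $u=U+v$: the equation for $v$ is semilinear with a bounded linearized potential (because $U$ is bounded) and a small compactly supported source of size $O(h^2)$; finite speed of propagation confines everything to a fixed compact set, and in dimension $n=2,3$ an energy estimate combined with Gr\"onwall on $[0,T]$ gives, for $h\ll1$, a bounded solution on $[0,T]$, uniqueness within the class of bounded solutions (subtract two such solutions and energy-estimate the difference), and $\|v\|=O(h^2)$; hence $\Lambda(u_\text{in}^{\mathbb R})=U|_{t=T}+O(h^2)$, which is part (a). I expect this step to be the main obstacle: we are deliberately outside the small-data regime (the whole point of the method is $u\sim1$), so the perturbation argument for $v$ must be carried out with care, and the infinite harmonic cascade must be controlled — but the oddness of $f$ is exactly what keeps the amplitude $O(1)$ and pins us to the principally linear regime in which this argument closes.

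For part (b), the second term in \r{R8T} exposes $X\gamma_k(x,\omega)$ for every unit $\omega$ and every odd $k$. Since $\gamma_k\big(\cdot,\chi(-T+x\cdot\omega)\big)$ is compactly supported, injectivity of the X-ray transform in $n=2,3$ recovers $\gamma_k\big(x,\chi(-T+x\cdot\omega)\big)$; varying $\omega$ and rescaling $\chi$ (as in the remark following Theorem~\ref{thm_main}) then recovers $\gamma_k(x,M)$ for all $x$ and $M\in[0,K]$. Finally one reconstructs $\g$: summing the Chebyshev series $\g(x,M^2q^2)q=\sum_{k\ \textrm{odd}}\gamma_k(x,M)T_k(q)$ over $q\in(0,1]$ recovers $\g(x,p)$ for $p\in(0,M^2]$; alternatively, already the single coefficient $\gamma_1(x,M)$, viewed as a function of $M$, determines $\g(x,\cdot)$ through the Abel integral equation \r{R9}. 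Either way $\g(x,p)$ is uniquely determined for all $x$ and all $p\in[0,K^2]$.
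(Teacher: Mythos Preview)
Your proposal is correct and follows essentially the same route as the paper: the multi-harmonic ansatz, the Chebyshev expansion of the nonlinearity, the transport equations for $a_k^{(1)}$, and the justification via the cutoff/perturbation machinery of the appendix, yielding exactly Proposition~\ref{pr_R} and hence part~(a). Two small corrections on part~(b): first, reading off the individual $X\gamma_k$ from the sum in \r{R8T} is not quite immediate, since $X\gamma_k$ itself depends on $x_\parallel$ through $\chi(-T+x_\parallel)$; the paper handles this ``slow versus fast'' separation in Proposition~\ref{pr_Xg}. Second, to sweep $M=\chi(-T+x_\parallel)$ over $[0,K]$ you vary $x_\parallel$ across $\supp\chi$, not rescale $\chi$ --- rescaling is what pushes $K$ itself higher, as in the remark after Theorem~\ref{thm_main}.
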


To prove the theorem, we show that one can recover $X\gamma_k$ first. Those functions look like the Fourier sine coefficients of the second term in the $x_\parallel :=x\cdot\omega$ variable but they depend on $x_\parallel$ as well, through $X\gamma$. That additional dependence is ``slow'', which allows for the separation. 
 Then we can invert the X-ray transform to get $\gamma_k$, and therefore $\g$. Note that $x_\parallel$ can be fixed here.  We also show that we can recover $\g$ from $\gamma_1$ alone by solving an Abel equation; then we need all $x_\parallel$, see \r{R10}. Details are given in section~\ref{sec_real}. In particular, it is shown there that all steps in the recovery are explicit. 

Finally, when $f$ is not necessarily even in $u$, existence of solution, and the geometric optics construction still works, see section~\ref{sec_G_non-odd} but we get zeroth harmonics. About the inverse problem, we prove the following. 

\begin{theorem}\label{thm_even} 
Let $f= \alpha_1(x)u+ \dots+\alpha_m(x) u^m$ for $|u|\le K$, $m\ge2$. Then, for $h\ll1$, there is unique bounded solution of \r{1} with a real initial condition \r{10real} for $t<-R-\delta$,  defined for $t\in [0,T]$. Moreover,  $\Lambda(u_\textrm{\rm in}^\mathbb{R})$, as $h\to0$, recovers $\alpha_m$ uniquely. In particular, $f(x,u) = \alpha(x) u^m$ is recovered uniquely. 
\end{theorem}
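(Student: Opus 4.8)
The plan is to read $\alpha_m$ off the top ($m$-th) harmonic of the exit signal, exploiting that --- precisely because $f$ is a \emph{polynomial} of degree $m$ --- this harmonic decouples, to leading order in $h$, both from the rectified (zeroth) harmonic and from the sub-leading coefficients $\alpha_1,\dots,\alpha_{m-1}$. Write $\phi=-t+x\cdot\omega$ and $z=e^{\i\phi/h}$. First I would invoke the geometric optics construction of section~\ref{sec_G_non-odd}, which provides, for $h\ll1$, the unique bounded solution on $[0,T]$ together with an expansion into harmonics
\be{thm13_GO}
u = u^{(0)} + \sum_{0<|k|\le m} u^{(k)}\, z^{k} + O(h^2),\qquad u^{(-k)}=\overline{u^{(k)}},
\ee
in which $u^{(0)},u^{(\pm1)}=O(1)$ (with $u^{(0)}$ the rectified field, governed by the semilinear equation \r{E12}), $u^{(k)}=O(h)$ for $2\le|k|\le m$, and all harmonics of order larger than $m$ are $O(h^2)$. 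In this regime the transport equation for $u^{(\pm1)}$ has an $O(1)$ right-hand side and is integrated over the bounded interval $[0,T]$ from the incident data \r{10real}, so $u^{(\pm1)}=\tfrac12\chi(\phi)+O(h)$.

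Next I would isolate the $m$-th harmonic of $f(x,u)=\sum_{j=1}^m\alpha_j(x)u^j$. Modulo $O(h)$ one has $u=a_0+a_1z+a_{-1}z^{-1}$ with $a_0=u^{(0)}$, $a_{\pm1}=u^{(\pm1)}$, so the coefficient of $z^m$ in $u^j$ is a sum of multinomial terms $a_0^{k_0}a_1^{k_1}a_{-1}^{k_{-1}}$ with $k_1-k_{-1}=m$ and $k_0+k_1+k_{-1}=j\le m$; since $k_1\ge m$, this forces $j=m$, $k_1=m$, $k_0=k_{-1}=0$. Hence the $m$-th harmonic of $f(x,u)$ is $\alpha_m(x)\,a_1^m+O(h)=\alpha_m(x)\big(\tfrac12\chi(\phi)\big)^m+O(h)$, with no dependence on $u^{(0)}$ or on $\alpha_1,\dots,\alpha_{m-1}$. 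Feeding this source into the principal transport equation for $u^{(m)}$ --- whose characteristics are the light rays $s\mapsto(t_0+s,x_0+s\omega)$, along which $\phi$ and hence $\chi(\phi)$ are constant --- and integrating from $t\to-\infty$ (where $u^{(m)}=0$, since \r{10real} carries only the $\pm1$ harmonics and $m\ge2$) up to $t=T$, at which time the ray has already left $\supp_x f$ because $T>R+\delta$, I would obtain, with $(X\alpha_m)(x,\omega):=\int\alpha_m(x+s\omega)\,\d s$ and an explicit nonzero constant $c_m$ (which one checks is purely imaginary),
\be{thm13_um}
u^{(m)}|_{t=T} = c_m\, h\, \chi^m(-T+x\cdot\omega)\,(X\alpha_m)(x,\omega) + O(h^2).
\ee
Adding the conjugate harmonic, $\Lambda(u_\textrm{\rm in}^\mathbb{R})$ then acquires the term $2\Re\!\big(c_m h\,\chi^m(-T+x\cdot\omega)(X\alpha_m)(x,\omega)\,z^{m}\big)$, a fixed multiple of $h\,\chi^m(-T+x\cdot\omega)\sin\!\big(m(-T+x\cdot\omega)/h\big)(X\alpha_m)(x,\omega)$, oscillating at frequency $m/h$ just like the harmonics in \r{R8T}.

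Then I would recover $\alpha_m$. For each fixed $\omega$, the same separation-of-scales mechanism as for Theorem~\ref{thm_main_R}(b) --- the amplitudes vary on the $O(1)$ scale in $x\cdot\omega$ while $z^m$ oscillates on scale $h$ --- allows one to read off the $O(h)$ coefficient of $z^m$ in $\Lambda(u_\textrm{\rm in}^\mathbb{R})$; the $O(1)$ contributions $k=0,\pm1$ and all harmonics with $|k|<m$ sit at other frequencies and do not interfere. That coefficient is $c_m\,\chi^m(-T+x\cdot\omega)(X\alpha_m)(x,\omega)$. Since $\chi\not\equiv0$, there is $\phi^\ast\in(-\delta,\delta)$ with $\chi(\phi^\ast)\ne0$; restricting to the slice $x\cdot\omega=T+\phi^\ast$ of the measured slab $|x\cdot\omega-T|\le\delta$ and dividing by the known nonzero number $c_m\chi^m(\phi^\ast)$ produces $(X\alpha_m)(x,\omega)$ for every line in direction $\omega$ that meets $\supp_x f$. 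Letting $\omega$ range over $S^{n-1}$ and inverting the X-ray transform, injective on compactly supported functions, recovers $\alpha_m$; in particular, for $f(x,u)=\alpha(x)u^m$ we get $\alpha=\alpha_m$, and every step is constructive.

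The substantive work is not in this recovery but in the ingredient borrowed above: section~\ref{sec_G_non-odd} must establish, for $h\ll1$, the existence of the bounded solution on $[0,T]$ and the validity of the harmonic expansion \r{thm13_GO} with quantitatively controlled remainders in the presence of the rectified field $u^{(0)}$, which itself solves the semilinear wave equation \r{E12}. Granting that, the one genuinely delicate point in the argument above is the combinatorial claim that no product of at most $m$ factors drawn from $\{u^{(0)},u^{(\pm1)}\}$ can yield a $z^m$ term containing $u^{(0)}$; this is precisely where the hypothesis that $f$ is a polynomial of degree exactly $m$ enters, and it explains why the method recovers only the leading coefficient: for a non-polynomial $f$, or for the sub-leading $\alpha_j$, the relevant harmonic is coupled to $u^{(0)}$ and the clean formula \r{thm13_um} breaks down.
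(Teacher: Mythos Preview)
Your proposal is correct and follows essentially the same route as the paper. The paper phrases the key decoupling via the Fourier coefficients $\mathsf{f}_k$ of \r{Ef}: expanding $(u+M\cos\theta)^j$ by the binomial theorem, only the term $M^m\cos^m\theta=2^{1-m}M^m\cos(m\theta)+\dots$ contributes to $\mathsf{f}_m$, so $\mathsf{f}_m=2^{1-m}M^m\alpha_m$ is independent of the zeroth harmonic $u$; your multinomial argument in $z=e^{\i\phi/h}$ is exactly the same computation in exponential rather than trigonometric form, and the subsequent integration of the transport equation \r{E13} and extraction via Proposition~\ref{pr_Xg_even} match your steps \r{thm13_um} and the separation-of-scales recovery.
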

The reconstruction is explicit again and it uses the highest harmonic in the subprincipal term which is not affected by the zeroth one. We make it even more explicit in the special case of a quadratic nonlinearity $f(x,u)=\alpha(x)u^2$. 

Uniqueness in the case $f(x,u) = \alpha(x) u^m$, and stability (for  specific small incident waves) was proven in \cite{lassas2020uniqueness} using the higher order linearization method. The analysis there, as explained earlier, is based on the asymptotic behavior of the non-linearity as $u\to0$. 

The results here can be extended in several directions; we chose not to do so in order to keep the exposition more transparent.  One can involve a Riemannian metric, the non-linearity $f$ can depend on $t$ as well, and one can localize the probing waves on the plane $x\perp\omega$. We refer to section~\ref{sec_remark} for a discussion.

The structure of the paper is as follows. The odd case with a complex incident wave is studied in section~\ref{sec_complex}, where we prove Theorem~\ref{thm_main}. The odd case with a real incident wave and the related geometric optics construction are considered in section~\ref{sec_real}, where we prove Theorem~\ref{thm_main_R}. Section~\ref{sec_even}is devoted to the case of general $f$ and to the proof of Theorem~\ref{thm_even} in particular. Numerical examples are presented in section~\ref{sec-num}, some further remarks can be found in section~\ref{sec_remark}, and a discussion --- in section~\ref{sec_discussion}. In the appendix, we review some known and prove some new results about the solvability of \r{1}.

\section{Odd nonlinearities. A complex monochromatic incident wave} \label{sec_complex}
We assume first $f$ odd as in \r{1odd} and the incident wave complex as in \r{10a}. 
\subsection{Geometric optics} 
Let $u$ be as in Theorem~\ref{thm_main}. We prove first the following. 
\begin{proposition}\label{pr_c} 
Under the assumptions of Theorem~\ref{thm_main}, there is unique bounded solution for $t\in[0,T]$ when $h\ll1$. Moreover, 
\be{main_eq0}
\begin{split}
u(t,x) &= e^{\i (-t+x\cdot\omega)/h} \chi(-t+x\cdot\omega) \\
&\quad \times \left(  
1 - \i \frac{h}2\int_{-\infty}^t  \g\big(x+s\omega,\chi^2(-t+x\cdot\omega) \big)\,\d s 
\right) +O(h^2)
\end{split}
\ee
uniformly in $t$ and $x$.
\end{proposition}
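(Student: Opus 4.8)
The plan is to construct a geometric optics (WKB) solution with a phase that is exactly linear, $\phi=-t+x\cdot\omega$, since $\phi$ solves the eikonal equation for the flat d'Alembertian, and seek an amplitude in powers of $h$. Writing $u = e^{\i\phi/h}(a_0 + h a_1 + h^2 a_2 + \dots)$ and plugging into $\Box u + \g(x,|u|^2)u = 0$, I would first note that the leading $h^{-2}$ term vanishes because $|\nabla_{t,x}\phi|^2 = -1+|\omega|^2 = 0$ (null phase). At order $h^{-1}$ one gets the familiar transport equation $2\,\partial_\phi a_0 = 2(\partial_t + \omega\cdot\nabla_x)a_0 = 0$ along the rays $s\mapsto(t+s, x+s\omega)$, because the nonlinearity $\g(x,|u|^2)u$ contributes only at order $h^0$ (here $|u|^2 = |a_0|^2 + O(h)$ stays bounded by $K^2$, so $\g$ is evaluated in its smooth regime). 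Hence $a_0$ is transported unchanged from the initial data, giving $a_0 = \chi(-t+x\cdot\omega)$ — this is the principally-linear regime advertised in the introduction.

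Next, at order $h^0$ the equation for $a_1$ reads $2\i(\partial_t+\omega\cdot\nabla_x)a_1 = -\Box a_0 - \g(x,|a_0|^2)a_0$. Since $a_0=\chi(\phi)$ depends only on $\phi$ and $\Box\phi = 0$ with $|\nabla\phi|^2=0$, the term $\Box a_0 = \chi''(\phi)|\nabla_{t,x}\phi|^2$ vanishes as well (or more carefully, $\Box(\chi(\phi)) = \chi''(\phi)(-1+|\omega|^2)=0$), so the transport equation for $a_1$ is driven purely by the nonlinear term: $2\i\,\partial_s a_1 = -\g(x+s\omega, \chi^2(\phi))\chi(\phi)$ along the ray, with $a_1 \to 0$ as $s\to-\infty$ (no incoming subprincipal part). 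Integrating in $s$ from $-\infty$ to the value corresponding to time $t$ gives
\[
a_1(t,x) = -\frac{\i}{2}\,\chi(-t+x\cdot\omega)\int_{-\infty}^{t} \g\big(x+s\omega,\chi^2(-t+x\cdot\omega)\big)\,\d s ,
\]
which is exactly the $O(h)$ correction in \r{main_eq0}. One should keep in mind that after the ray exits $\supp_x\g = B(0,R)$ the integrand vanishes, so the integral is effectively over a compact $s$-interval and $a_1$ is itself compactly supported in $\phi$ with the same support as $\chi$; the ``$\int_{-\infty}^t$'' for $t=T>R+\delta$ agrees with $\int_{\mathbb R}$, which is how one passes to \r{main_eq}.

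The remaining work is to upgrade this formal construction to a genuine solution with a rigorous $O(h^2)$ remainder, uniformly in $(t,x)$ on $[0,T]\times\R^n$. For this I would proceed as usual: truncate the WKB series at a sufficiently high order (order $h^2$ suffices since we only claim a two-term expansion) to get an approximate solution $u_{\mathrm{app}}$ satisfying $\Box u_{\mathrm{app}} + \g(x,|u_{\mathrm{app}}|^2)u_{\mathrm{app}} = O(h^2)$ in a suitable norm (say $L^\infty_t H^s_x$ with $s$ large so the nonlinearity acts nicely by Moser estimates and $H^s\hookrightarrow L^\infty$), then set $u = u_{\mathrm{app}} + v$ and derive the equation for $v$, which is a semilinear wave equation with a small forcing $O(h^2)$ and coefficients depending on $u_{\mathrm{app}}$. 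A standard energy/fixed-point argument — invoking the solvability results from the appendix referenced in the excerpt — then shows that for $h\ll1$ there is a unique bounded $v$ with $\|v\|=O(h^2)$ on $[0,T]$, yielding both the existence/uniqueness claim and the stated expansion. The main obstacle, and the step deserving the most care, is precisely this last one: controlling the error equation uniformly over the fixed time interval $[0,T]$ despite the $h^{-1}$ and $h^{-2}$ losses coming from differentiating $e^{\i\phi/h}$, i.e. checking that the residual really is $O(h^2)$ in the right norm after all derivatives land, and that the nonlinear term $\g(x,|u|^2)u$ (which is genuinely present, not a linearization) does not spoil the estimate — this is where the boundedness $|u|\le K+O(h)$ and the smoothness of $\g$ on $[0,K^2]$ are essential, together with Grönwall on the energy of $v$.
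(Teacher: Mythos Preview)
Your approach is the paper's: same WKB ansatz with the linear null phase, same transport hierarchy yielding $a_0=\chi(\phi)$ and the explicit $a_1$, and the same justification strategy via an approximate solution plus a stability/energy estimate for the remainder. (A minor slip: your order-$h^0$ transport equation should read $2\i(\partial_t+\omega\cdot\nabla_x)a_1 = +\g(x,|a_0|^2)a_0$, since in the paper's notation $T=-2\partial_s$; your final $a_1$ is nonetheless correct.)

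The one genuine imprecision is in the justification. You assert that truncating at ``order $h^2$'' gives a residual $O(h^2)$ in $L^\infty_tH^s_x$ for $s$ large, but this is exactly what fails: the two-term approximant $u_1=e^{\i\phi/h}(a_0+ha_1)$ has residual $P(u_1)=O(h)$ in $L^\infty$, and each spatial derivative of the oscillatory factor costs a power of $h$, so the residual is only $O(h^{1-s})$ in $H^s$. The paper resolves precisely the obstacle you flag by \emph{continuing the WKB construction to higher order} $u_N$, with $R_N=O(h^{N-s})$ in $H^s$; since the stability estimate (Theorem~\ref{stability}) needs the source in $H^1$ to control the error in $H^2$ (hence in $L^\infty$ by Sobolev embedding for $n=2,3$), one must take $N\ge3$ to conclude $u-u_N=O(h^2)$ uniformly. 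The paper also makes the existence step precise by first replacing $f$ with a cut-off $\kappa(|u|^2)f(x,u)$ so that Theorem~\ref{cut-off} gives global well-posedness automatically, and then removes the cut-off a posteriori from $|u|\le K+O(h)$; your final sentence is in this spirit but does not spell it out.
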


\begin{proof}
As before, we are looking for an asymptotic solution of \r{1} of the form
\be{2}
u=e^{\i\phi(t,x,\omega)/h} a(t,x,\omega,h)  
\ee
with $0\le h\ll1$, $|\omega|=1$, and $a$ having some asymptotic expansion in powers of $h$, not necessarily integer ones a priori: $a=a_0(t,x)+ha_1(t,x)+ \dots$ with $a_j$ independent of $h$.  
Plug \r{2} into \r{1} to get
\be{3}
\begin{split}
e^{-\i\phi /h}P(e^{\i\phi /h}  a) &=  e^{-\i\phi /h}\Box (e^{\i\phi /h} a) +   \g(x, |a|^2)a  \\
&= h^{-2}(-\phi_t^2+|\p_x\phi|^2)a+ 2\i h^{-1}(\phi_t, -\p_x\phi)\cdot(\p_t, \p_x)a\\ &\quad +\i h^{-1} (\Box\phi )a + \Box a+   \g(x, |a|^2)a\\&=0,
\end{split}
\ee
where $P(u)$ is the non-linear operator on the l.h.s.\ of \r{1}.  

We are in the linear regime now. The highest order  $h^{-2}$ term gives us the eikonal equation
\be{5}
\phi_t^2-|\p_x\phi|^2=0.
\ee
The $h^{-1}$ order term gives us the well-known first transport equation, the same as in the linear case:
\be{6}
Ta_0=0, \quad T:= 2 (\phi_t, -\p_x\phi)\cdot (\p_t, \p_x) + \Box\phi. 
\ee
The second transport equation is affected by the non-linearity:
\be{T2}
\i T a_1 + \Box a_0 + \g(x,|a_0|^2)a_0=0.
\ee 
Note that here we used the expansion $\g(x,p+h) =\g(x,p)+O(h)$.  

Now, with the linear phase $\phi = -t+x\cdot\omega$, the first transport equation takes the form
\be{8}
-2 (1, \omega)\cdot (\p_t, \p_x)a_0 = 0.
\ee

Recall that we have the incoming wave \r{10a}. In time-space, for a fixed $\omega$, introduce the variables 
\be{sy}
(s,y)=(t,x-t\omega); \quad \text{then $(t,x)=(s,y+s\omega)$}. 
\ee
Then $\p_s= \p_t+\omega\cdot\p_x$, and $T=-2\p_s$. The first transport equation with its  initial condition takes the form
\be{IC0}
-2\frac{\d}{\d s}a_0 =0, \quad a_0=\chi(y\cdot\omega)\quad \text{for $s\ll0$}. 
\ee
Therefore, 
\be{aoc}
a_0=\chi(y\cdot\omega) = \chi(-t+ x\cdot\omega).
\ee

The second transport equation \r{T2} takes the form
\be{T2a}
-2\i \frac{\d}{\d s} a_1  + \chi(y\cdot\omega)\, \g\big(y+s\omega, 
\chi^2(y\cdot\omega)\big)=0, \quad a_1|_{s\ll0}=0.
\ee 
Note that this would be the transport equation if we had a time dependent potential (depending on the incident direction $\omega$ as well)
\[
V(t,x) = \g(x,  \chi^2(-t+x\cdot\omega)).
\]
The solution of \r{T2a} is
\be{a1}
a_1(s) = -\frac{\i}2 \chi(y\cdot\omega)\int_{-\infty}^s \g\big(y+\sigma \omega, \chi^2(y\cdot\omega)\big)\,\d \sigma.
\ee
If we restrict this to $y\cdot\omega=\tau$, corresponding to $t-x\cdot\omega= \tau$, we get the effective potential $V(x) = \g(x,\chi^2(\tau))$, as explained in the introduction.

Let $u_1=e^{\i\phi/h}(a_0+h a_1)$. Then $u_1$ solves $e^{-\i\phi/h}P(u_1)  =-h\Box a_1 +f(x, e^{-\i\phi/h}(a_0+h a_1))-f(x, e^{-\i\phi/h}a_0) $, see \r{3}. By \r{1odd}, we can write this as $ P(u_1)  =-h e^{\i\phi/h}\Box a_1 +f(x,  (a_0+h a_1))-f(x,  a_0) $, so we get $P(u_1)=O(h)$ in $L^\infty$ but each derivative of the r.h.s.\ multiplies this by $h^{-1}$. By the estimates in Theorem \ref{cut-off}, we get $u_1=O(h)$ in the energy norm, which is not sufficient. 
Following  \cite{S-Antonio-nonlinear}, we continue the construction to a higher order.  Expanding the nonlinearity in Taylor series in $h$, and at each step, we solve linear ODEs. Thus for every $N>1$, we have $u_N$ having an expansion similar to \r{main_eq0} with a remainder $R_N$ satisfying 
\be{R01}
\Box u_N + f(x,u_N) = R_N = O(h^{N-s})\quad \text{in $H^s$}.
\ee
Moreover, $\|u_N\|_{L^\infty}\le C$ for $h<h_0$ for every fixed $h_0>0$ and $C$ depending on $f$ and $\chi$.   Given $\eps>0$, we can choose $h_0$ so that $C=K+\eps$, where $K=\max\chi$. Take $\kappa$ in \r{Weq2} so that $\kappa(q^2)=1$ in a neighborhood of $q^2\le K$ and $\kappa=0$ in a slightly larger one. Then $u_N$ solves \r{R01} with $f$ replaced by its cut-off version $\kappa(u^2)f(x,u)$ as in \r{Weq2}. By Theorem~\ref{cut-off}, there exists a unique $u$ solving \r{1} with $f$ replaced by its cutoff version.  
Choose $s=1$ in \r{R01} to get $u-u_N=O(h^{N-1})$ in $H^2$, uniformly in $t$, by Theorem~\ref{stability}. Then by the trace theorem, this is also true in the uniform norm since $n=2,3$. In particular, this shows that $|u|$ has the same upper bound up to $O(h)$; therefore, we can remove the cut-off to get that $u$ actually solves \r{1}. By 
Theorem~\ref{unique-1}, this  solution $u$ is unique among all bounded solutions for $t\in [0,T]$.  
\end{proof}

\begin{remark}
The statement of the proposition holds for $0<h\le h_0$ with $h_0$ depending on $\chi$ and on $f$. For the purpose of the unique recovery statements in the main theorems, one can take $f_1$ and $f_2$; apply the expansion with the smallest of the two $h_0$'s, and then prove $f_1=f_2$. 
\end{remark}

We should also remark here that  \cite[Theorem 8.3.1]{Rauch-geometric-optics} shows that, for semilinear systems of order one, at least, once one finds an approximate oscillatory solution $u_h$ as above, up to order $h^\infty,$ then there exists an exact solution $u$ which is $C^\infty$  and  such that $u-u_h=O(h^\infty)$.  


\begin{proof}[Proof of Theorem~\ref{thm_main}] 
The existence and the uniqueness statement and part (a) follow from Proposition~\ref{pr_c}. 

Fixing $-T+x\cdot\omega= :\tau\in \supp\chi$, we recover the X-ray transform of $\g(\,\cdot\,,K^2\chi^2(\tau))$ at the direction $\omega$. Indeed, writing $x=z+\sigma\omega$, $z\perp\omega$, we get
\[
\begin{split}
\int  \g\big(x+s\omega,\chi^2(-T+x\cdot\omega)\big)\,\d s &= \int  \g\big(z+(s+\sigma) \omega,\chi^2(-T+\sigma)\big)\,\d s\\
&= \int  \g\big(z+s \omega,\chi^2(-T+\sigma)\big)\,\d s. 
\end{split}
\]
When $|T-x\cdot\omega|\le\delta$, we have $|T-\sigma|\le\delta$, so when we fix $\tau = -T+x\cdot\omega= -T+\sigma$, we have $|\sigma|\le\delta$ and the formula above is the X-ray transform of $\g(\cdot, \chi^2(\tau))$ for such $\tau$. 
 Varying $\tau$, $\chi^2(\tau)$ runs over the range of $\chi^2$ which is $[0,K^2]$. This proves (b). Part (c) follows immediately. 
\end{proof}

\section{Odd nonlinearities. A real incident monochromatic wave} \label{sec_real}
We assume $f$ odd as in \r{1odd} and the incident wave real as in \r{10real}. 
\subsection{Heuristic arguments} 
Now, we would expect $|u|^2 \sim \cos^2(\phi/h)\chi^2(\phi)$, $\phi=-t+x\cdot\omega$, i.e., the principal term in the expansion of $u$ to be unaffected by the non-linearity but that would change in next section. Plugging this into $\g(x,|u|^2)$ yields the formal potential
\be{R_NL}
V:=\g\big(x,\cos^2(\phi/h)\chi^2(\phi)\big) 
\ee
 (up to an $O(h)$ remainder), which oscillates highly, therefore the method we used before needs modifications. The nonlinearity  $\textrm{NL}= \g(x,|u|^2)u$ then takes the form
\be{R_NL1}
\textrm{NL}=\g\big(x,\cos^2(\phi/h)\chi^2(\phi)\big) \chi(\phi)\cos(\phi/h)
\ee
modulo $O(h)$.  We think of $\phi$ in $\chi(\phi)$ as a second copy of $\phi$, independent of the one in the $\cos(\phi/h)$ term. Set $\chi(\phi)=:M$.  
We may want to use the formula $2\cos^2\theta=1+\cos(2\theta)$. 
Then  $\text{NL} =g(x, M^2\cos^2(\phi/h)) M\cos(\phi/h)$. Let us expand the $2\pi$ periodic, even function $\theta\mapsto \g\big(x,M^2 \cos^2\theta \big)\cos\theta$ (note that we removed the factor $M$) into a Fourier cosine series
\be{R1}
\g\big(x,M^2\cos^2 \theta \big)\cos\theta  =\frac{\gamma_0(x,M)}2+ \sum_{m\ge1} \gamma_m(x,M)\cos (m\theta),
\ee
(we will see that $\gamma_0=0$ in a moment and that $m$ must be odd), where
\be{R5}
\gamma_m(x,M) = \frac2\pi \int_0^\pi   \g\big(x,M^2\cos^2(\theta) \big) \cos\theta  \cos(m\theta) \,\d\theta.
\ee
Perform the change of variables $ q= \cos\theta$ to get formula \r{R3} stated in the Introduction. Knowing $\gamma_m$, we can recover $\g(x,p)$ for $0\le p\le M^2$ by
\be{R3a}
\g(x,M^2q^2) q = \sum \gamma_m(x,M)T_m(q).
\ee
The non-linearity takes the form
\be{R4}
\textrm{NL}= \chi(\phi) \sum_{m\ge0, \, \text{\rm odd}} \gamma_m(x, \chi(\phi) )\cos(m\phi/h)
\ee
modulo $O(h)$.

\subsection{Geometric optics in the principally linear regime} 
 Assume 
\be{R2}
u\sim  \sum_{k\in \mathbf{Z}\setminus 0}  e^{\i k \phi/h}  a^{(k)}(t,x,\omega,h),
\ee
compare with \r{2}, where each $a^{(k)}$ has an expansion $a^{(k)} = a^{(k)}_0+h a^{(k)}_1+\dots$. 
Since we want $u$ to stay real, we require $\bar a^{(k)}= a^{(-k)}$. Using \r{3}, we see that we can take the phase $\phi$ satisfying the same eikonal equation \r{5}; and we take the phase $\phi=-t+x\cdot\omega$ because this is what is dictated by the initial condition \r{10real}. Plug \r{R1} into the equivalent of \r{3} (no absolute value in $|u|^2$ there) to get 
\be{exp}
\begin{split}
-h^{-1}\sum_k & e^{\i k\phi/h}2\i k\frac{\d}{\d s} \left(  a^{(k)}_0+h a^{(k)}_1+\dots\right) \\
& +\g \Big(x, \Big( \sum_m e^{\i m\phi/h} \Big( a^{(m)}_0+h a^{(m)}_1+\dots\Big)\Big)^2 \Big)\sum_k e^{\i k\phi/h}    \Big( a^{(k)}_0+h a^{(k)}_1+\dots\Big) \\
&+ \sum_k e^{\i k\phi/h}  \Box \left(  a^{(k)}_0+h a^{(k)}_1+\dots\right)=0.
\end{split}
\ee
Expand $\g$ into a Taylor series in its second variable to get that the second term above multiplied by $h$ equals
\be{R6}
h\g \Big(x, \Big( \sum_m  e^{\i m\phi/h}    a^{(m)}_0 \Big)^2 \Big) \sum_k e^{\i k\phi/h} a^{(k)}_0   + O(h^2).
\ee
The first transport equations, see also \r{IC0}, says that $a_0^{(k)}$ stay constant along the rays, therefore, by \r{10real},
\be{R6a}
a^{(1)}_0=a^{(-1)}_{0} = \frac12 \chi(y\cdot\omega),
\ee
in the variables \r{sy}, and all other $a^{(k)}_0$ coefficients vanish. In particular, this means that in \r{R6}, $m=-1,1$ only, same for $k$ there. Then  \r{R6} takes the form
\[
h \g(x, M^2\cos^2(\phi/h)) M\cos(\phi/h) +O(h^2), 
\]
which is just $hM \textrm{NL}+O(h^2))$, see \r{R_NL1}, as expected. 
We expanded this into Fourier cosine series in \r{R4}. This, together with \r{exp}, shows that the second transport equations take the form
\be{R6b}
-2\i k \frac{\d}{\d s}a_1^{(k)}  + \frac12 \gamma_k(y+s\omega, \chi( y\cdot\omega) ) \chi( y\cdot\omega)   =0, \quad a_1^{(k)} |_{s\ll0}=0
\ee
with the Chebyshev coefficients $\gamma_k$ given by \r{R3}. 
Therefore,
\be{R7}
a_1^{(k)}(s) = -\frac{\i}{4k} \chi( y\cdot\omega) \int_{-\infty}^s \gamma_k\big(y+\sigma\omega, \chi(y\cdot\omega)\big)\, \d\sigma,
\ee
compare with \r{a1}. The coefficients $\gamma_m$ are rapidly converging, locally uniformly with respect of its variables, which makes it easy to prove convergence in \r{R8}. The construction can be continued up to any finite order with a justification of the expansion as in the previous section. 

This proves the following.

\begin{proposition}\label{pr_R} 
Under the assumptions of Theorem~\ref{thm_main_R}, there is unique bounded solution $u$ defined for $t\in [0,T]$. Moreover, 
\be{R8}
\begin{split}
u&=  \cos\frac{-t+x\cdot\omega}{h} \chi(-t+x\cdot\omega)\\
&\quad + h\chi(-t+x\cdot\omega)\sum_{k\ge1,\,\textrm{\rm odd} }\frac1{2k}  \sin\frac{k( -t+x\cdot\omega)}{h} \int_{-\infty}^t \gamma_k\big(x+\sigma\omega, \chi(-t+x\cdot\omega) \big)\, \d\sigma\\&\quad +O(h^2),
\end{split}
\ee
 uniformly, where the Chebyshev coefficients of the non-linearity are given by \r{R3}.
\end{proposition}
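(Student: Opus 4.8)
The plan is to follow the scheme of the proof of Proposition~\ref{pr_c}, now with the multi-harmonic WKB ansatz \r{R2} and the reality constraint $\bar a^{(k)}=a^{(-k)}$. First I would fix the phase $\phi=-t+x\cdot\omega$ --- forced by the initial data \r{10real} and solving the eikonal equation \r{5} --- and pass to the ray coordinates \r{sy}, in which the transport operator acting on the $k$-th harmonic becomes $-2\i k\,\d/\d s$. The $h^{-1}$-order terms give the first transport equations: each $a_0^{(k)}$ is $s$-independent, and matching with \r{10real} in the region $s\ll0$ forces $a_0^{(\pm1)}=\frac12\chi(y\cdot\omega)$ and $a_0^{(k)}=0$ for $|k|\ne1$, so the principal part is purely linear, as announced in the heuristics.

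Next, at the $h^0$ order I would insert this principal part into $\g$; since only $a_0^{(\pm1)}$ survive, the argument of $\g$ is $M^2\cos^2(\phi/h)$ with $M=\chi(y\cdot\omega)$, and a Taylor expansion of $\g$ in its second slot shows the nonlinear term contributes $h\,M\,\textrm{NL}+O(h^2)$ with $\textrm{NL}$ as in \r{R_NL1}. Expanding $\theta\mapsto\g(x,M^2\cos^2\theta)\cos\theta$ in a Fourier cosine series --- equivalently, expanding $\g(x,M^2q^2)q$ in Chebyshev polynomials over $q\in[-1,1]$, which kills even indices by parity --- converts $\textrm{NL}$ into \r{R4}, and the $h^0$ terms of \r{exp} decouple into the scalar ODEs \r{R6b}, whose solutions are \r{R7}. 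Summing over odd $k$ and returning to $(t,x)$ gives the displayed expansion \r{R8} up to $O(h^2)$.

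Two points then need care. First, convergence of the $k$-sum in \r{R8}: since $\g$ is smooth, the Fourier--Chebyshev coefficients $\gamma_k(x,M)$ and all their $x,M$-derivatives decay faster than any power of $k$, locally uniformly, so the series --- and the analogous series produced at higher orders --- converge in $C^\infty$ on compact sets; this also lets the construction be iterated to any finite order $N$, producing $u_N$ of the form \r{R8} (with further terms) such that $\Box u_N+f(x,u_N)=R_N=O(h^{N-s})$ in $H^s$ and $\|u_N\|_{L^\infty}\le K+\eps$ for $h\le h_0(\eps)$. Here one must track that the $O(h)$ errors coming from $\g(x,p+O(h))=\g(x,p)+O(h)$ and from the trigonometric reductions cost one power of $h$ per derivative, exactly as in Proposition~\ref{pr_c}, so they are harmless after enough iterations. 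Second, passing from the approximate to an exact bounded solution: introducing the cutoff $\kappa(u^2)f$ as in \r{Weq2}, Theorem~\ref{cut-off} gives a unique solution of the cutoff equation, Theorem~\ref{stability} gives $u-u_N=O(h^{N-1})$ in $H^2$ hence in $L^\infty$ (as $n=2,3$), so $|u|\le K+O(h)$ and the cutoff can be removed; Theorem~\ref{unique-1} then yields uniqueness among all bounded solutions on $[0,T]$. The main obstacle is the bookkeeping in this last step --- organizing the higher-order iteration so that the infinite family of harmonics stays under control in the energy norms and the remainder estimate \r{R01} genuinely holds --- but structurally it is identical to the complex case, and no new analytic input is needed beyond the rapid decay of the Chebyshev coefficients.
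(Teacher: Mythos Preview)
Your proposal is correct and follows essentially the same route as the paper: the multi-harmonic ansatz \r{R2} with $\phi=-t+x\cdot\omega$, first transport equations forcing \r{R6a}, the Fourier--Chebyshev expansion \r{R4} of the nonlinearity yielding the decoupled second transport equations \r{R6b} with solutions \r{R7}, rapid decay of $\gamma_k$ for convergence, and then the cutoff-plus-stability argument of Proposition~\ref{pr_c} to pass from $u_N$ to the exact bounded solution. The paper is terser on the justification step (it simply says ``the construction can be continued to higher order with a justification of the expansion as in the previous section''), but what you spell out is exactly that.
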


\subsection{The inverse problem. Proof of Theorem~\ref{thm_main_R}} 
By Proposition~\ref{pr_R}, 
\[
 \begin{split}
\Lambda (u_\text{\rm in}^\mathbb{R}) &=   \chi(-T+x\cdot\omega) \bigg( \cos\frac{-T+x\cdot\omega}{h}
 + h \sum_{k\ge1,\,\textrm{\rm odd} } \sin\frac{k( -T+x\cdot\omega)}{h} X\gamma_k\bigg) +O(h^2),
\end{split}
\]
with $X\gamma_k$, given by \r{R8a},  
is the X-ray transform of $\gamma_k$ in the $x$ variable. In particular, this proves part (a) of Theorem~\ref{thm_main_R}. 
 It is convenient to set 
\[
x=x_\perp+x_\parallel\omega,
\]
where $x_\perp=x-(x\cdot\omega )\omega \perp \omega$, $x_\parallel=x\cdot\omega$. Then 
\[
 \begin{split}
\Lambda (u_\text{\rm in}^\mathbb{R}) &=   \chi(-T+x_\parallel) \bigg( \cos\frac{-T+x_\parallel}{h}
 + h \sum_{k\ge1,\,\textrm{\rm odd} } \sin\frac{k( -T+x_\parallel)}{h} X\gamma_k\bigg) +O(h^2),
\end{split}
\]
with
\be{R11b}
X\gamma_k(x,\omega) = \int \gamma_k\big(x_\perp+\sigma\omega, \chi(-T+x_\parallel) \big)\, \d \sigma. 
\ee
Given $\Lambda(u_\text{in}^\mathbb{R})$, we know the subprincipal term  above. That term looks like a Fourier sine series in the $x_\parallel$ variable with Fourier coefficients $X\gamma_k$ but $X\gamma_k$ depends on $x_\parallel$ as well. 
On the other hand, it does not oscillate fast with $h$, so in any interval of length $\sim h$ is constant up to $O(h)$. Therefore, we can  compute the Fourier coefficients for $x_\parallel$   in any fixed interval with length the period $2\pi h$ as if $X\gamma_k$ did not depend on $x_\parallel$, up to $O(h)$. This shows that we can recover $X\gamma_k$. 

We have recovered the X-ray transform of $  \gamma_k\big(\,\cdot\, , \chi(-T+x_\parallel) \big)$ in the direction $\omega$. Now we vary $\omega$ and take measurements at $x$ with $x_\parallel= x\cdot\omega$ unchanged (we can think of it as rotating the setup). Then we recover the Chebyshev coefficients $\gamma_k(x,M)$ for every $M=\chi(-T+x\cdot\omega)$. Varying $x$ in the strip $|x\cdot\omega-T|\le\delta$ allows is to recover $\gamma_k(x,M)$ for every $M\in [0,K]$ with $K=\max\chi$ as before. Then we can recover $\g(x,p^2)p$ for $|p|\le K$, see \r{R1}. 

One downside of this is that to recover all coefficients, we need measurements at increasing frequencies $k/h$, $k=1,3,\dots$. If we are limited in that, we can recover some finite Fourier expansion only. When we do numerical simulations, higher $k$ means even smaller step sizes if we use finite differences.

There is an alternative way, however. We can recover the first Chebyshev coefficient $\gamma_1(x,M)$ only. By \r{R3}, we know
\be{R9}
\begin{split}
 \gamma_1(x,M) &= \frac4\pi  \int_{0}^1   \frac{\g(x,M^2q^2) q^2  } {\sqrt{1-q^2}}\, \d q\\
&= \frac4{M^3 \pi} \int_{0}^M  \frac{\g(x,q^2)q^2  } {\sqrt{1-q^2/M^2}}\, \d q\\
&=  \frac4{M^2\pi} \int_{0}^M  \frac{\g(x,q^2) q^2 } {\sqrt{M^2-q^2}} \, \d q. 
\end{split}
\ee
This is an Abel equation with an explicit unique solution \cite[p.~24]{Gorenflo_Abel}
\be{R10}
\g(x,q^2) = \frac1{2q^2} \frac{\d}{\d q} \int_{0}^q  \frac{M^3 \gamma_1 (x,M) } {\sqrt{q^2-M^2}}\,  \d M.
\ee

To compensate for noise, in order to extract the Fourier coefficients in \r{R13}, 
we may want to integrate over a larger interval of ``small'' length but independent of $h$. Next, division by $\chi$ is not really needed before integration. Based on that, we propose the following scheme. Denote by $u_L(T,x,\omega)$ the principal term above. Note that this is exactly the (linear) solution if there were no nonlinearity, i.e., when $\g=0$. 
Compute (denoting $x_\parallel=s$)
\be{R11x}
A_k := h^{-1}\int \sin\frac{k(-T+s)}{h} \big(\Lambda (u_\text{\rm in}^\mathbb{R})(x_\perp+s\omega,\omega)-u_L(T,x_\perp+s\omega,\omega) \big)\psi(\sigma-s)\d s
\ee
where $\psi\in C_0^\infty(\R)$. 
In other words, we multiply the difference $ \Lambda (u_\text{\rm in}^\mathbb{R})-u_L$ by $\psi(\sigma-s)$, the variable $\sigma$ would control the shift; and then, and then we project on a sine Fourier mode.  
 
By \r{R11x},
\be{R13}
\begin{split}
A_k &= \sum_{k'\ge1,\,\textrm{\rm odd} } \int  \sin\frac{k(-T+s)}{h} \sin\frac{k' ( -T+s)}{h}\chi(-T+s) X\gamma_{k'} ( x_\perp+s\omega,\omega)\psi(\sigma-s)\d s + O(h)\\
&=  \sum_{k'\ge1,\,\textrm{\rm odd} } \int  \sin\frac{ks}{h} \sin\frac{k' s}{h}\chi(s) X\gamma_{k'} ( x_\perp +(s+T)\omega,\omega)\psi(\sigma-s-T)\, \d s + O(h).
\end{split}
\ee
Set $f_k(s) = \chi(s)X\gamma_k ( x_\perp+(s+T)\omega,\omega)\psi(\sigma-s-T)$ for a moment, suppressing the other variables. Use the formula $2\sin(ks/h)\sin(k' s/h)= \cos((k'-k)s/h)) -\cos((k'+k)s/h)$ to get 
\[
\begin{split}
A_k =\frac12 \sum_{k'\ge1,\,\textrm{\rm odd} }  \Re \left(\hat f_k\Big( \frac{k'-k}{h}\Big) -   \hat f_k\Big( \frac{k'+k}{h}\Big)\right) =  \frac12  \int   f_k(s) \,\d s + O(h)
\end{split}
\]
since the only non $O(h^\infty)$ contribution comes from $\hat f_k(0)$. 
Indeed, by \r{R5}, $|\partial_x^\alpha \gamma_k|\le C_\alpha$ independently of $k$. Then $|\hat f_k(\xi)|\le C_N\langle\xi \rangle^{-N}$, $\forall N$, independently of $k$.  Then
\[
\begin{split}
\Big|A_k-\frac12 \hat f_k(0)\Big|&\le C_N \sum_{k'\ge1,\,\textrm{\rm odd}, \, k'\not=k}  \left( \Big\langle \frac{k'-k}{h}\Big\rangle^{-N} +  \Big\langle \frac{k'+k}{h}\Big\rangle^{-N} \right)+ O(h)\\  
&\le C_N'\sum_{m\ge1}\langle m/h\rangle^{-N} + O(h)
\end{split}
\]
We have $\langle m/h\rangle^{-1} =(1+(m/h)^2)^{-1}= h(h^2+k^2)^{-1/2}< h/m$ for $m\ge1$. Therefore, the sum above can be estimated by $C_N'h^N\sum_{k\ge1}m^{-N}\le C_N'' h^N$ for $N\ge2$. 

We proved the following.

\begin{proposition} \label{pr_Xg}
We have 
\be{R14}
A_k = \int  \chi(s)X\gamma_k ( x_\perp+(s+T)\omega,\omega)\psi(\sigma-s-T)\,\d s +O(h)
\ee
In particular, choosing a sequence of $\psi$ converging to the Dirac $\delta$, we can recover 
\be{R14a}
\chi(\sigma-T)X\gamma_k ( x_\perp+\sigma\omega,\omega)
\ee
for every $\sigma$, $\omega$ and $x_\perp$. 
\end{proposition}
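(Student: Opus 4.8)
The plan is to insert the asymptotic expansion of $\Lambda(u_\text{\rm in}^\mathbb{R})$ furnished by Proposition~\ref{pr_R} (see \r{R8T}) into the definition \r{R11x} of $A_k$. Since $u_L(T,\cdot,\omega)$ is exactly the leading cosine term of that expansion, subtracting it removes the principal part, and what remains is $h^{-1}$ times the $h$-order term plus $h^{-1}$ times the $O(h^2)$ remainder; the former is $O(1)$ and the latter is $O(h)$. The change of variables $s\mapsto s+T$ then brings $A_k$, up to $O(h)$, to the sum over odd $k'$ displayed in \r{R13}, i.e.\ a sum of integrals $\int \sin(ks/h)\sin(k's/h)\,\chi(s)\,X\gamma_{k'}(x_\perp+(s+T)\omega,\omega)\,\psi(\sigma-s-T)\,\d s$.

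Next I would use $2\sin(ks/h)\sin(k's/h)=\cos((k'-k)s/h)-\cos((k'+k)s/h)$ and read off each resulting integral as the real part of $\hat f_{k'}$ evaluated at frequency $(k'-k)/h$, resp.\ $(k'+k)/h$, where $f_{k'}(s):=\chi(s)\,X\gamma_{k'}(x_\perp+(s+T)\omega,\omega)\,\psi(\sigma-s-T)$ is smooth and compactly supported. Since $k,k'$ are odd, the only frequency that is not $\ge 2/h$ in modulus is $0$, attained by the $\cos((k'-k)s/h)$ term with $k'=k$; by the rapid decay of the Fourier transform of a $C_0^\infty$ function all the other terms are $O(h^\infty)$. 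Hence only the diagonal $k'=k$ survives, contributing $\tfrac12\hat f_k(0)=\tfrac12\int f_k(s)\,\d s$, and after collecting the elementary constants this is the right-hand side of \r{R14}.

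The point requiring care is that the sum runs over infinitely many $k'$, so the $O(h^\infty)$ bound on the off-diagonal terms must be uniform in $k$ and $k'$. Here I would invoke \r{R5}: the integrand defining $\gamma_k$ is bounded together with all of its $x$-derivatives uniformly in $k$, so $|\partial_x^\alpha\gamma_k|\le C_\alpha$ with $C_\alpha$ independent of $k$, whence $|\partial_s^\alpha f_k|\le C_\alpha'$ and therefore $|\hat f_k(\xi)|\le C_N\langle\xi\rangle^{-N}$, both uniformly in $k$. Summing, using $\langle m/h\rangle^{-1}<h/m$ for $m\ge1$, gives $\sum_{m\ge1}\langle m/h\rangle^{-N}\le C_N' h^N$ for $N\ge2$, which absorbs the full off-diagonal contribution into the $O(h)$ error and establishes \r{R14}.

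Finally, \r{R14} exhibits $A_k$, up to $O(h)$, as the value at $\sigma$ of the convolution in $\sigma$ of $s\mapsto\chi(s-T)\,X\gamma_k(x_\perp+s\omega,\omega)$ with $\psi$. Taking a sequence $\psi\to\delta$ and letting $h\to0$, with $h$ small relative to the width of $\psi$ so the uniform bounds above are not destroyed, continuity of $\sigma\mapsto\chi(\sigma-T)\,X\gamma_k(x_\perp+\sigma\omega,\omega)$ yields the pointwise value \r{R14a} for all $\sigma$, $\omega$, $x_\perp$. The main obstacle is precisely the $k$-uniformity of the non-stationary phase estimate; everything else is elementary bookkeeping.
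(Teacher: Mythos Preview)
Your proposal is correct and follows essentially the same route as the paper: substitute the expansion \r{R8T} into \r{R11x}, use the product-to-sum identity for sines to rewrite each term as a Fourier transform of $f_{k'}$ at frequencies $(k'\pm k)/h$, isolate the sole zero-frequency contribution at $k'=k$, and control the infinite off-diagonal sum via the $k$-uniform bound $|\partial_x^\alpha\gamma_k|\le C_\alpha$ coming from \r{R5} together with $\sum_{m\ge1}\langle m/h\rangle^{-N}\le C_N h^N$. Your indexing $f_{k'}$ is in fact cleaner than the paper's $f_k$, but the argument is the same.
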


Therefore, we can recover the convolution of $s\mapsto \chi(s)X\gamma_k ( x_\perp+(s+T)\omega,\omega)$ with arbitrary test functions. In particular, we can recover $X\gamma_k ( x_\perp+\sigma\omega,\omega)$ (think of $\sigma$ as $x_\parallel$) as long as $\sigma-T\in\supp\chi$ but the latter is guaranteed when $X\gamma_k ( x_\perp+\sigma\omega,\omega)$ does not vanish, by \r{R8a}. The convenience of \r{R14} is that it allows us to recover a convolved (a regularized) version of the latter directly from possibly noisy data.

\begin{remark}
To recover $X\gamma_k$ we had to take two limits: first $h\to0$, and next, $\psi\to\delta$ in $\mathcal{D}'$. In applications, we would like take $h\ll1$ and get $X\gamma_k$ up to a small error. One can set $\psi_h(s) =h^{-\mu} \psi(s/h^\mu)$ with $0<\mu<1$ in \r{R11x}. Then  $A_k$ would be equal to  \r{R14a} directly  plus $o(1)$ as $h\to0$, and one can be more specific about the error. 
\end{remark}

\subsection{Recovery algorithms}
We deduced two recovery algorithms. 

\subsubsection{Recovery using all harmonics} 

\begin{itemize}
\item[(i)] Fix $\chi$ and recover $X\gamma_k(x,\omega)$ for $k=1,3,\dots$, by Proposition~\ref{pr_Xg}, and all $x$, $\omega$. 
\item[(ii)] Invert the X-ray transform for every $x_\parallel$ fixed, see \r{R11b}, to recover $\gamma_k(x,M)$ for every $x$ and for every $M\in [0,K]$, $K:=\max\chi$.
\item[(iii)] Recover $\g(x,p)$ by \r{R3a} for every $x$ and every $p\in [0,K^2]$. 
\item[(iv)] To recover $\g(x,p)$ for $p$ on a larger interval, we choose another $\chi$, for example we replace the original one by a scaled version $C\chi$. 
\end{itemize}

This requires measurements or numerical experiments for increasing frequencies $k/h$, $k=3,5,\dots$. 
Note that we actually need one value of $M$ to recover $\g$ for $p$ in a fixed interval; in fact choosing $M=K$ is enough. This corresponds to making measurements at a fixed $x_\parallel$ which maximizes $\chi(x_\parallel-T)$.

\subsubsection{Recovery through the first harmonic} 
\begin{itemize}
\item[(i)] Fix $\chi$ and recover $X\gamma_1(x,\omega)$ by Proposition~\ref{pr_Xg}
\item[(ii)] Invert the X-ray transform for every $x_\parallel$ fixed, see \r{R11b}, to recover $\gamma_1(x,M)$ for every $x$ and for every $M\in [0,K]$, $K:=\max\chi$.
\item[(iii)] Solve the Abel equation \r{R9} to recover $\g(x,p)$ by \r{R10} for every $x$ and every $p\in [0,K^2]$. 
\item[(iv)] To recover $\g(x,p)$ for $p$ on a larger interval, we choose another $\chi$, for example we replace the original one by a scaled version $C\chi$. 
\end{itemize}

Now, we use  all $x_\parallel$ in the data but the first harmonic is enough.

\subsection{The polynomial (cubic) case} \label{sec_cubic}
As a special case, we show how our approach works when $\g(x,|u|^2)u=\alpha(x) u^3$ (and $u$ is real-valued), i.e., when $\g(x,p)=\alpha(x)p$. The construction works in the same way for $\g(|u|^2)u=\alpha(x) u^{2j+1}$, $j\ge1$ integer. 
This is the case considered in \cite{S-Antonio-nonlinear} but there we used  weakly non-linear solutions with amplitudes $\sim h^{-1/2}$. The non-linearity there affects the principal term and creates phase shifts. 

 Then, see \r{R1}, 
\[
\g(x,M^2\cos^2(\theta)) =M^2 \alpha(x) \left(\frac12  + \frac12\cos(2\theta)\right),
\]
which leads to the effective potential, compare to \r{R_NL}, 
\[
V=\alpha(x) \chi^2(\phi) \left(\frac12  + \frac12\cos(2\theta)\right).
\]
Therefore, after multiplying by $u$, we get first and third harmonics only in this case. Indeed, 
\be{Rp0}
\g(x, M^2\cos^2\theta)\cos\theta = M^2\alpha \Big( \frac34\cos \theta+\frac14\cos(3\theta)\Big),
\ee
see also \r{R1}, therefore, $\gamma_1 =3M^2\alpha/4 $, $\gamma_3= M^2\alpha/4 $, all other vanish.

 This leads to an ansatz of the type \r{R2} with $a_1^{(k)}$ having non-zero entries for $k\in \{-3,1,1,3\}$ only:
\be{Rp1}
\begin{split}
u &\sim a_0^{(-1)}e^{-\i\phi/h} + a_0^{(1)}e^{\i\phi/h} \\
 & \quad + h\Big(  a_1^{(-3)}e^{-\i\phi/h} + a_1^{(-1)}e^{\i\phi/h} + a_1^{(1)}e^{\i\phi/h} + a_1^{(3)}e^{3\i\phi/h}          \Big) + O(h^2)\\
 &= 2\Re\Big( a_0^{(1)}e^{\i\phi/h} + h\Big( a_1^{(1)}e^{\i\phi/h} + a_0^{(3)}e^{3\i\phi/h}          \Big)\Big) + O(h^2).
\end{split}
\ee
The $\sim h^2$ term would have harmonics $k\in \{-9,-7,\dots,7,9\}$, etc. As before, the principal terms is unaffected by the non-linearity, so we still have \r{R6a}, therefore the principal part is $u_0:=\chi(y\cdot\omega) \cos(\phi/h)$ in the coordinates \r{sy}. Then, as in \r{Rp0},
\[
\alpha u^3 = \alpha\Big( \frac34\cos(\phi/h)+\frac14\cos(3\phi/h)\Big)\chi^3(y\cdot\omega) ,
\]
which has Fourier coefficients  $\{1/8,3/8, 3/8, 1/8\}$ all multiplied by $M^3\alpha$ with $M$ as above. They can be written as $3M^3 \alpha/(8 |k|)$.

By \r{exp}, the second transport equations are
\[
-2\i k \frac{\d}{\d s} a_1^{(k)} + \frac3{8|k|} M^3\alpha(y+s\omega)  =0, \quad k=-3,-1,1,3. 
\]
The zero initial conditions imply
\be{Rp3}
a_1^{(k)} =- \frac{3\i} {16 k^2} M^3\sign(k) \int   \alpha(y+s\omega)\,\d s.
\ee
We compare this with \r{R7}. We have 
\[
\gamma_1(y+\sigma\omega,\chi(y\cdot\omega)) = \frac34  \chi^2(y\cdot\omega)\alpha(y+\sigma\omega), \quad \gamma_3(y+\sigma\omega,\chi(y\cdot\omega)) = \frac14  \chi^2(y\cdot\omega)\alpha(y+\sigma\omega).
\]
Multiply this by ${-\i /(4k)}$ 
and integrate in $\sigma$ to get 
\[
a_1^{(1)}(s) = -\frac{3\i}{16}  \chi^3(y\cdot\omega)  \int   \alpha(y+s\omega)\,\d s, \quad a_1^{(3)}(s)=\frac19 a_1^{(1)}(s);
\]
also, $a_1^{(-1)}= -a_1^{(1)}= \bar a_1^{(1)}$, $a_1^{(-3)}= -a_1^{(3)}= \bar a_1^{(3)}$, as expected. This confirms \r{Rp3}. 

The subprincipal term in the expansion \r{R8T} of $u$ therefore is
\be{Rp3a}
\frac{h}{8}\chi^3(\phi) X\alpha \bigg(\sin(\phi/h)+\frac19 \sin(3\phi/h)   \bigg ).
\ee
The Abel equation \r{R9} involving $\gamma_1$ takes the form
\[
\gamma_1 = \frac4{\pi} \int_0^M\frac{\alpha(x)M^2q^4}{\sqrt{1-q^2}}\,\d q , 
\]
and a direct computation yields $\gamma_1=3M^2\alpha/4$, as we found out earlier. The recovery formula \r{R10} takes the form
\be{Rp4}
\alpha q^2 = \frac1{2q^2} \frac{\d}{\d q} \int_{0}^q  \frac{M^3 (3M^2\alpha/4 ) } {\sqrt{q^2-M^2}}\,  \d M.
\ee
Set $M=q\cos\theta$ in the integral to  transform the right-hand side to
\[
\frac{3\alpha} {8q^2} \frac{\d}{\d q} q^5\int_{0}^{\pi/2}   {\cos^5\theta }  \,  \d \theta = \frac{3\alpha} {8q^2} \frac{\d}{\d q} q^5 .\frac{8}{15} = \alpha q^2,
\]
which confirms \r{Rp4}. 

\section{Not necessarily odd non-linearities} \label{sec_even}

Assume now that the non-linearity is $f(x,u)$, not necessarily odd in $u$. 

Some heuristic arguments can convince us that we cannot expect the zeroth harmonic to affect the subprincipal (and lower) terms only. If we follow \r{exp}, we would see that when $k=0$, we are missing the first transport equation for the zeroth harmonic leading term $a_0^{(0)} $ because that equation would be multiplied by $-2\i k$. Similarly, in \r{R6b}, the derivative cancels when $k=0$, etc. For this reason, we will seek a zeroth harmonic contribution to the \textit{principal} term.

\subsection{A quadratic non-linearity}\label{sec_q2}
We start with the example $f=\alpha(x)|u|^2$ which is interesting on its own. 
Note that global solvability is not guaranteed and probably not even true. On the other hand, solutions of the type we need do exist, as it follows form our analysis. 
Assume the real incident wave \r{10real} as before. 
 We will look for a solution of the form
\be{E01}
\begin{split}
u&= \frac{\chi(\phi)} 2\Big(e^{-\i\phi/h}+ e^{\i\phi/h}\Big)+ u_0^{(0)}\\
&\quad + h\Big(e^{-2\i\phi/h} a_1^{(-2)}+ e^{-\i\phi/h} a_1^{(-1)}+  u_1^{(0)} +e^{\i\phi/h} a_1^{(1)}+ e^{2\i\phi/h} a_1^{(2)}\Big)+ O(h^2),
\end{split}
\ee
where the zeroth harmonic is $u^{(0)}= u^{(0)}_0+h u^{(0)}_1+\dots$; a more consistent notation would be $a^{(0)}$. 
We presume that the $k=\pm1$ harmonics would be the same as those of $u_\textrm{in}^\mathbb{R}$, and in fact we get that directly from the leading transport equations for $k\not=0$. 
Plugging this into the quadratic non-linearity, we get a principal part
\be{E02}
 \text{NL}_0:=  \frac12  \alpha {\chi^2(\phi)}(1+\cos(2\phi/h)) + \alpha \Big(u_0^{(0)} \Big)^2 + 2\alpha{\chi(\phi)} u_0^{(0)} \cos(\phi/h).
\ee
The zeroth harmonic is
\be{E02a}
 \frac12\alpha {\chi^2(\phi)} +  \alpha \Big(u_0^{(0)} \Big)^2  . 
\ee
Plugging this in \r{1} and isolating the zeroth harmonics in the principal term,  we can expect the principal part of the zeroth harmonic to solve
\be{E03}
\Box u_0^{(0)} + \alpha \Big(u_0^{(0)} \Big)^2  = -\frac12\alpha(x) \chi^2(-t+x\cdot\omega), \quad u_0^{(0)} |_{t\ll0}=0.
\ee
Next we determine $a_1^{(k)}$ for $k=-2,-1,1,2$. The transport equation for $a_1^{(1)}$ is
\be{E03_1}
-2\i \frac{\d}{\d s} a_1^{(1)}    =-
\alpha \chi(\phi) u_0^{(0)} , \quad  a_1^{(1)} |_{t\ll0}=0.
\ee
In order to obtain it, we had to select the $k=1$ harmonic in \r{E02} to put on the r.h.s. Since $u_0^{(0)} $ is already determined, we can integrate it along the characteristic to get $a_1^{(1)}$. Then $a_1^{(-1)}$ is just its conjugate. In particular, the factor $\i=\sqrt{-1}$ shows that the first harmonic in the subprincipal term has a sine term. For $k=2$, we get similarly
\[
-4\i \frac{\d}{\d s} a_1^{(2)}    =-\frac14 \alpha \chi^2(\phi)  , \quad  a_1^{(2)} |_{t\ll0}=0.
\]
The solution is
\[
a_1^{(2)}(s) =- \frac{\i}{16}\chi^2(y\cdot\omega)\int_{-\infty}^s \alpha(y+\sigma\omega)\,\d\sigma,
\]
compare with \r{R7}. In the $(t,x)$ variables, see \r{sy}, we get
\be{E04}
a_1^{(2)}(t,x) =- \frac{\i}{16}\chi^2(-t+x\cdot\omega)\int_{-\infty}^0 \alpha(x+\sigma\omega)\,\d\sigma,
\ee
and in particular, 
\be{E05}
a_1^{(2)}(T,x) =- \frac{\i}{16}\chi^2(-T+x\cdot\omega)X\alpha(x,\omega),
\ee
for $x$ as in \r{2a}. Therefore, this term recovers the X-ray transform of $\alpha$.

To get  the equation for  $u_1^{(0)}$ (the second order term of the zeroth harmonic), we need to compute the non-oscillating terms in the $\sim h$ term of the non-linearity, i.e., of twice the product of the $O(1)$ and the $O(h)$ term in \r{E01}. They can be obtained by combining $k=0$ and $k=0$ in each term; or $k=1$ with $k=-1$, respectively $k=-1$ and $k=1$. 
We get
\be{E05a}
\Box u_1^{(0)} + 2\alpha u_0^{(0)} u_1^{(0)} =- \alpha \chi^2(\phi) a_1^{(1)}, \quad u_1^{(0)}|_{t\ll0}=0.   
\ee
This is a linear wave equation for $u_1^{(0)}$ with known coefficients. 

One can compute a full asymptotic expansion that way. The next order term will have harmonics $\{-4,-3,\dots,3,4\}$, etc. We proved (the justification is as in the odd case) the following.

\begin{proposition}
The unique bounded solution to 
\be{E06}
\Box u + \alpha(x)u^2=0
\ee
with initial condition \r{10real}  satisfies
\be{E07}
\begin{split}
u &= \chi(-t+x\cdot\omega) \cos\frac{-t+x\cdot\omega}h + u_0^{(0)}\\
&\quad  + h u_1^{(0)} + 2\i h  a_1^{(1)} \sin\frac{-t+x\cdot\omega}h + 2\i h  a_1^{(2)} \sin\frac{2(-t+x\cdot\omega)}h + O(h^2).
\end{split}
\ee
\end{proposition}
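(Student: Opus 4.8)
The plan is to carry out the multi-harmonic WKB scheme of Sections~\ref{sec_complex} and~\ref{sec_real}, the one structural difference being that the zeroth harmonic now enters at the \emph{principal} order. First I would substitute the ansatz \r{E01} into \r{E06}, expand $\alpha u^2$ simultaneously in powers of $h$ and in the Fourier modes $e^{\i k\phi/h}$, and collect coefficients. For $k\neq0$ the $h^{-1}$ coefficient of $e^{\i k\phi/h}$ carries the nonvanishing factor $-2\i k\,\partial_s$ (in the coordinates \r{sy}), so the leading transport equations force $a_0^{(k)}=0$ for $|k|\geq2$ and, from the incident condition \r{10real}, $a_0^{(\pm1)}=\frac12\chi(y\cdot\omega)$, exactly as in \r{R6a}. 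For $k=0$ that factor vanishes, so there is no constraint at order $h^{-1}$; instead one balances the $h^0$ non-oscillating terms, and after isolating the non-oscillating part \r{E02a} of the nonlinearity \r{E02} this yields the semilinear wave equation \r{E03} for the principal zeroth harmonic $u_0^{(0)}$, with vanishing Cauchy data in the past.

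Next I would solve the resulting hierarchy in order. Granting (see the last paragraph) a bounded solution $u_0^{(0)}$ of \r{E03} on $[0,T]$, the $k=\pm1$ and $k=\pm2$ second transport equations \r{E03_1} and the one following it are linear first-order ODEs along the lines $x=y+s\omega$ with already-known right-hand sides, hence solved by a single integration as in \r{R7}; this produces $a_1^{(1)}$ (with $a_1^{(-1)}=\bar a_1^{(1)}$) and the explicit $a_1^{(2)}$ of \r{E04} (with $a_1^{(-2)}=\bar a_1^{(2)}$), and the factor $\i$ on the left of these equations makes $a_1^{(1)},a_1^{(2)}$ purely imaginary, which is what turns their contribution into a sine in \r{E07}. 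The non-oscillating part of the $O(h)$ terms of $\alpha u^2$ then gives the \emph{linear} wave equation \r{E05a} for $u_1^{(0)}$, with coefficients already determined. Iterating, at each order $N$ one solves one linear wave equation for the new zeroth-harmonic coefficient (whose linear part is $\Box+2\alpha u_0^{(0)}$) together with finitely many linear transport ODEs for the harmonics $|k|\leq N$; this builds, for every $N$, an approximate solution $u_N$ of the form \r{E07} plus an extra $O(h)$ tail, satisfying $\Box u_N+\alpha u_N^2=R_N=O(h^{N-s})$ in $H^s$ and $\|u_N\|_{L^\infty}\leq C$ uniformly for $h<h_0$.

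The justification and the conclusion are then obtained exactly as in the proof of Proposition~\ref{pr_c}: replace $\alpha u^2$ by its cut-off $\kappa(u^2)\alpha u^2$, so that Theorem~\ref{cut-off} supplies a unique bounded solution $u$ of the cut-off problem; take $s=1$ and use the stability estimate of Theorem~\ref{stability} to get $u-u_N=O(h^{N-1})$ in $H^2$, hence in $L^\infty$ for $n=2,3$ by the trace theorem; this shows $|u|$ obeys the same bound as $|u_N|$ up to $O(h)$, so the cut-off can be removed a posteriori, and Theorem~\ref{unique-1} gives uniqueness among bounded solutions on $[0,T]$. Finally, collecting the principal and first-order terms of $u_N$ and using $e^{-\i k\phi/h}\bar a_1^{(k)}+e^{\i k\phi/h}a_1^{(k)}=2\Re\big(e^{\i k\phi/h}a_1^{(k)}\big)=2\i a_1^{(k)}\sin(k\phi/h)$ for the purely imaginary $a_1^{(1)},a_1^{(2)}$ yields \r{E07}.

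The step I expect to be the real obstacle is the one already flagged in the text: producing a bounded solution of \r{E03} on the fixed interval $[0,T]$, since a quadratic nonlinearity precludes global solvability in general and the forcing $-\frac12\alpha(x)\chi^2(-t+x\cdot\omega)$ is $O(1)$, not small. I would handle it with the cut-off device of the appendix applied to \r{E03} itself: solve $\Box u_0^{(0)}+\kappa\big((u_0^{(0)})^2\big)\alpha(u_0^{(0)})^2=-\frac12\alpha\chi^2$, whose nonlinearity is globally Lipschitz, so that a bounded solution exists on $[0,T]$; then either choose the cut-off level large enough, or equivalently rescale $\chi$ so that $\max\chi$ is small (harmless here, since for $f=\alpha(x)u^m$ one only needs to recover $\alpha$), so that the solution stays where $\kappa\equiv1$ and hence solves \r{E03}. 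Alternatively, the solvability results of the appendix, adapted to admit a compactly supported forcing term, can be quoted directly. Everything else is standard WKB bookkeeping together with the appendix's well-posedness, stability, and uniqueness results.
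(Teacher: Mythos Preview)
Your proposal is correct and follows the paper's approach exactly: the construction is precisely the content of Section~\ref{sec_q2} (equations \r{E01}--\r{E05a}), and the paper's entire proof of the proposition is the sentence ``the justification is as in the odd case,'' which is your third paragraph. The obstacle you flag---solving \r{E03} on $[0,T]$ with a quadratic nonlinearity and $O(1)$ forcing---is glossed over in the paper as well (``solutions of the type we need do exist, as it follows from our analysis''), so your proposed cut-off or small-$\chi$ workarounds are at least as explicit as what the paper offers.
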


Consider the inverse problem for \r{E06} now: recovering $\alpha$ from $\Lambda(u_\textrm{in})$. The principal term in \r{E07}, at $t=T$ and $x$  as in \r{2a}, carries information about $\alpha$ in the zeroth harmonic $u_0^{(0)}$.  The latter solves the non-linear wave equation \r{E03} however, so recovery of $\alpha$ from it seems not easier than the original problem. Also, since $u_0^{(0)}$ is smooth, no additional parameter, it would carry no meaningful resolution about $\alpha$ anyway. Next, the term $u_1^{(0)}$ has the same problem; so we look at the two oscillatory terms. The amplitude $2\i a_1^{(1)}$ of the first harmonic solves \r{E03_1} which depends on $u_0^{(0)}$ along the characteristic, which we do not know. The amplitude $2\i a_1^{(2)}$ of the second harmonic however is given by \r{E05}, which recovers $X\alpha$ explicitly. The next question is whether we can recover $ a_1^{(2)}$ from \r{E07}. This can be done as in Proposition~\ref{pr_Xg} since the contribution from the zeroth harmonics to $A_k$ in \r{R11x} would contribute an $O(h^\infty)$ term to its asymptotic. We provide more details below.

In other words, the inverse problem in this case is solvable in an explicit way as well.

\subsection{General non-odd non-linearities} \label{sec_G_non-odd}
When we plug $u=u_0+\chi(\phi) \cos(\phi/h)$ into the non-linearity, we get $f(x,u_0+\chi(\phi) \cos(\phi/h))$. Following the special case above, we expect the following Fourier coefficients to play a role:
\be{Ef}
\mathsf{f}_k(x,M, u)=\frac2{\pi} \int_{0}^\pi f(x,u+M \cos\theta) \cos(k\theta)\, \d\theta,
\ee
where we will eventually set $M=\chi(\phi)$ as done earlier. The zeroth coefficient $\mathsf{f}_0$ would be responsible for the leading term of the zeroth harmonic. 
When $f=\alpha u^2$, for example, we get $\mathsf{f}_0 = \alpha M^2+2\alpha u^2$, see \r{E02a}.  We are looking for $u$ having an asymptotic expansion 
 \be{E11}
\begin{split}
u&= \frac{\chi(\phi)} 2\Big(e^{-\i\phi/h}+ e^{\i\phi/h}\Big)+ u_0^{(0)} +h u_1^{(0)} + h\sum_{k}  e^{\i k\phi/h} a_1^{(k)}  + O(h^2),
\end{split}
\ee
Then  $u_0^{(0)}$ must solves  
\be{E12}
\Box u_0^{(0)} + \frac12 \mathsf{f}_0\big(x,\chi(\phi) , u_0^{(0)}\big)=0, \quad u_0^{(0)}|_{t\ll0}=0.
\ee
In general, $\mathsf{f}_0\not=0$ when $u=0$; and in fact, this is true  if and only if $f_0$ is odd in the $u$ variable for $|u|\le M$. Then, by uniqueness, we would get  $u_0^{(0)}=0$ in \r{E12}, which is what we got in the odd case. 

The equivalent of \r{E02} now is
\[
\text{NL}_0 = \sum \mathsf{f}_k\big(x,\chi(\phi),u_0^{(0)}\big) \cos(k\phi/h).
\]
Plugging \r{E11} into the PDE \r{1}, and arguing as in \r{exp}, we see first that \r{E12} is justified; and 
\be{E13}
2\i k \frac{\d}{\d s} a_1^{(k)} +  \mathsf{f}_k( x,\chi(\phi),u_0^{(0)}  ) =0,\quad k\not=0.
\ee
The equation for the subprincipal term $u_1^{(0)}$ of the zeroth mode looks similar to \r{E05a} with coefficients obtained by averaging the $\sim h$ term of the Taylor expansion of $f(x,\cdot)$ with the dot there replaced by \r{E11}. The construction can be extended to higher order and justified as done earlier. 

\subsection{The inverse problem}  The appearance of the zeroth modes complicates the inverse problem. They depends on the non-linearity in an implicit non-linear way. One special case when we can recover the non-linearity is when it is of the kind $f=\alpha(x)u^{2m}$ with $m\ge1$ integer, inspired by the quadratic case in section~\ref{sec_q2}. 
 
We show first that the coefficients in \r{E11} are recoverable from the data.

\begin{proposition} \label{pr_Xg_even}
For $A_k$ defined in \eqref{R11x}, we have
\be{E14}
A_k = \int    a_1^{(k)} (T , x_\perp+(s+T)\omega,\omega)\psi(\sigma-s-T)\,\d s +O(h), \quad k\ge1.
\ee
In particular, choosing a sequence of $\psi$ converging to the Dirac $\delta$, we can recover 
\[
 a_1^{(k)} (T,  x_\perp+\sigma\omega,\omega)
\]
for every $\sigma$, $\omega$ and $x_\perp$. 
\end{proposition}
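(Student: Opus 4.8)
The plan is to mimic the argument establishing Proposition~\ref{pr_Xg}, tracking the one new feature: the presence of a zeroth harmonic $u_0^{(0)}+hu_1^{(0)}+\dots$ in the asymptotic expansion \r{E11}, together with the finitely many oscillatory modes $e^{\i k\phi/h}a_1^{(k)}$. The key point to isolate at the outset is that the operator $A_k$ in \r{R11x} is, up to the subtraction of the linear term $u_L$, a windowed Fourier sine projection at frequency $k/h$, and such a projection annihilates, modulo $O(h^\infty)$, any smooth (non-oscillatory) contribution — in particular the zeroth-harmonic part — because its Fourier transform in the $s$ variable decays faster than any power of $h$ at frequency $k/h\neq0$.

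First I would substitute the expansion \r{E11} at $t=T$ into the definition \r{R11x} of $A_k$. The principal non-oscillatory piece $\chi(\phi)$ is not present at $t=T$ restricted to the exit window in the way it matters; rather the principal oscillatory piece of \r{E11} is $\tfrac12\chi(\phi)(e^{-\i\phi/h}+e^{\i\phi/h})=\chi(\phi)\cos(\phi/h)$, which is exactly $u_L$, so it is removed by the subtraction $\Lambda(u_\text{in}^\mathbb{R})-u_L$. What remains under the integral sign is: (i) the smooth zeroth-harmonic terms $u_0^{(0)}+hu_1^{(0)}$ evaluated at $t=T$, which are $C^\infty$ in $s=x_\parallel$ with all derivatives bounded uniformly; (ii) the oscillatory terms $h\sum_{j}e^{\i j\phi/h}a_1^{(j)}$; and (iii) the $O(h^2)$ remainder, which contributes $O(h)$ after division by $h$, using the $H^2\hookrightarrow L^\infty$ trace estimate as in the proof of Proposition~\ref{pr_c}. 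For (i), write $\sin(k(-T+s)/h)$ times a smooth compactly supported (after inserting the window $\psi(\sigma-s)$) function of $s$; its integral is the Fourier transform of that smooth function at $\pm k/h$, hence $O(h^\infty)$ by nonstationary phase / integration by parts, exactly as in the $\hat f_k$ estimate following \r{R13}. For (ii), after dividing by $h$ and integrating, the only surviving term comes from the product $\sin(k(-T+s)/h)$ against the $e^{\pm\i k\phi/h}$ mode, since all other modes $e^{\i j\phi/h}$ with $j\neq \pm k$ again give $O(h^\infty)$ by the same argument; the product $\sin(k\theta)$ against $e^{\pm\i k\theta}$ has constant Fourier component $\mp\i/2$ respectively, and using $a_1^{(-k)}=\overline{a_1^{(k)}}$ (reality of $u$) the two combine to reproduce $\int a_1^{(k)}(T,x_\perp+(s+T)\omega,\omega)\psi(\sigma-s-T)\,\d s$ up to $O(h)$. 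This gives \r{E14}.

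The second assertion then follows by letting $\psi$ run through an approximate identity converging to $\delta$ in $\mathcal{D}'$: the right-hand side of \r{E14} converges to $a_1^{(k)}(T,x_\perp+\sigma\omega,\omega)$, while the left-hand side is determined by the data $\Lambda(u_\text{in}^\mathbb{R})$ (note $u_L$ is data-independent, being the known linear solution). I would remark that, exactly as in the quadratic case of section~\ref{sec_q2}, it is essential here that only the oscillatory modes contribute, so that the (unknown, PDE-entangled) zeroth harmonic $u_0^{(0)}$ drops out — this is what makes the $k\ge1$ coefficients recoverable even though $u_0^{(0)}$ itself is not accessible by this device.

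I expect the main obstacle to be purely bookkeeping rather than conceptual: one must verify that the constants $C_\alpha$ bounding $\partial_x^\alpha a_1^{(k)}$ (equivalently the relevant Fourier coefficients $\mathsf{f}_k$ of \r{Ef}) are uniform in $k$, so that the sum over the finitely or countably many harmonics in \r{E11} of the $O(h^\infty)$ errors is itself $O(h^N)$ for every $N$ — this is the direct analogue of the estimate $|\hat f_k(\xi)|\le C_N\langle\xi\rangle^{-N}$ uniformly in $k$ used right after \r{R13}, and it holds because $\mathsf{f}_k$ are the Fourier coefficients of a fixed smooth function, hence rapidly decaying with $k$ together with all $x$-derivatives. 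The only genuinely new care needed is ensuring the zeroth-harmonic pieces $u_0^{(0)},u_1^{(0)}$ are smooth with uniformly bounded derivatives in the relevant region: this follows from standard energy estimates for the (semi)linear wave equations \r{E12} and its $u_1^{(0)}$-analogue, whose right-hand sides are smooth, together with the finite-speed-of-propagation confinement of the solution to a bounded region for $t\le T$.
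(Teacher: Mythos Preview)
Your proposal is correct and follows precisely the same approach as the paper, whose proof consists of the single remark that the argument of Proposition~\ref{pr_Xg} goes through verbatim, the only novelty being the zeroth modes, which contribute $O(h^\infty)$. Your write-up in fact supplies considerably more detail than the paper does (the windowed Fourier projection interpretation, the explicit handling of the $j=\pm k$ cross-terms, the smoothness of $u_0^{(0)}$ via energy estimates for \r{E12}, and the uniform-in-$k$ decay of $\mathsf f_k$), all of which is accurate and useful.
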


\begin{proof}
The proof is as that in Proposition~\ref{pr_Xg}. What is new here is that we have   zeroth modes but they would give us an $O(h^\infty)$ contribution. 
\end{proof}

\begin{proof}[Proof of Theorem~\ref{thm_even}]
In this case, $\mathsf{f}_m = 2^{1-m} M^m \alpha_m$ which can be seen easily by expanding $(u+M\cos\theta)^m$ by the binomial theorem, and then in Fourier series. Indeed, the highest power of $\cos\theta$ would be $M^m \cos^m\theta = M^m2^{1-m} \cos(m\theta)+\dots$.  Then $a_1^{(m)}=2^{1-m}\chi^m(-T+x\cdot\omega)X\alpha_m$ for $t=T$; therefore we can recover $X\alpha_m$ by Proposition~\ref{pr_Xg_even}, and then $\alpha_m$.
\end{proof}


\section{Numerical examples} \label{sec-num}
In the examples below, we work in the square $[-1,1]^2$ discretized to $N\times N$ nodes with $N$ at least $1,000$, with   $h=0.005$. Then the wavelength is $2\pi h=0.0314\dots$ which is about $15.7$ times larger than the step size. We also take $N=2,000$ and even $N=4,000$ when capturing higher order harmonics is essential. We use a finite difference solver to compute the solution of \r{1} numerically. The terminal time is $T=1.4$. 

Figure~\ref{fig_setup} illustrates the setup. The probing wave $u_\textrm{in}^\mathbb{R}$ (or its real part if we take its complex version $u_\textrm{in}^\mathbb{C}$) at $t=0$ is plotted on the left, and the Cauchy condition for its $t$-derivative is chosen so that it would propagate up. We take non-linearities of the type $f(x,u)= \alpha(x)  F(u)$ (this particular form is not essential for the computations) with $\alpha$ a Gaussian plotted in the middle of the figure. Finally, the plot on the right is that of $\Lambda(u_\textrm{in}^\mathbb{R})$ (or its real part), i.e., the solution at $t=T=1.4$. Since the effect of the non-linearity is in the subprincipal term, it has no visible effect on that plot. Instead, in some of the figures below, we plot $u-u_L$, where $u_L$ is the linear solution; i.e., we subtract the principal term. Then we plot a vertical cross-section of $u-u_L$ through the center in the direction shown: from top to bottom. We plot the top $1/3$ (approximately) of the cross-section, where $u-u_L$ is essentially supported. 

\begin{figure}[ht]
\begin{center}
	\includegraphics[trim = 0 22 0 0 , scale=0.22]{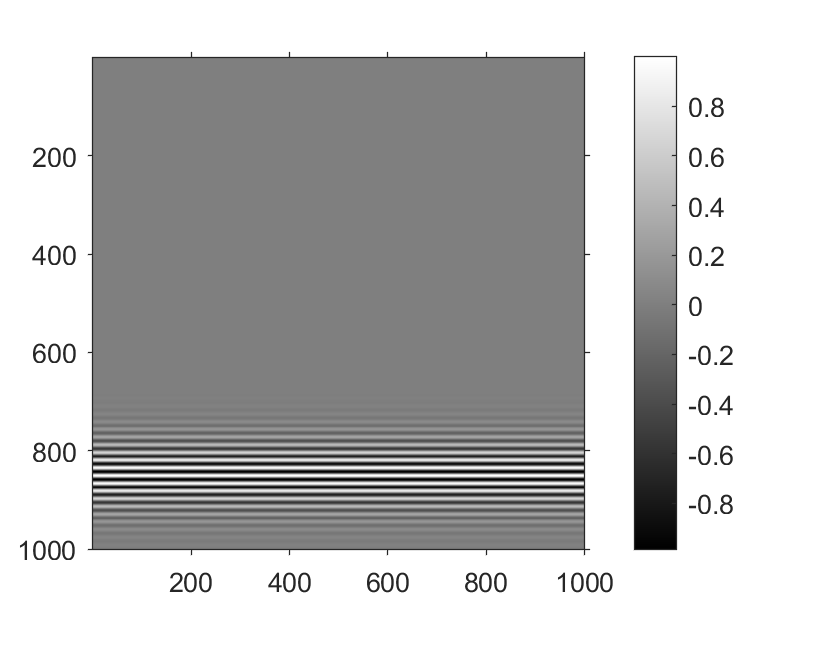}
	\includegraphics[ trim = 0 22 0 0 , scale=0.22]{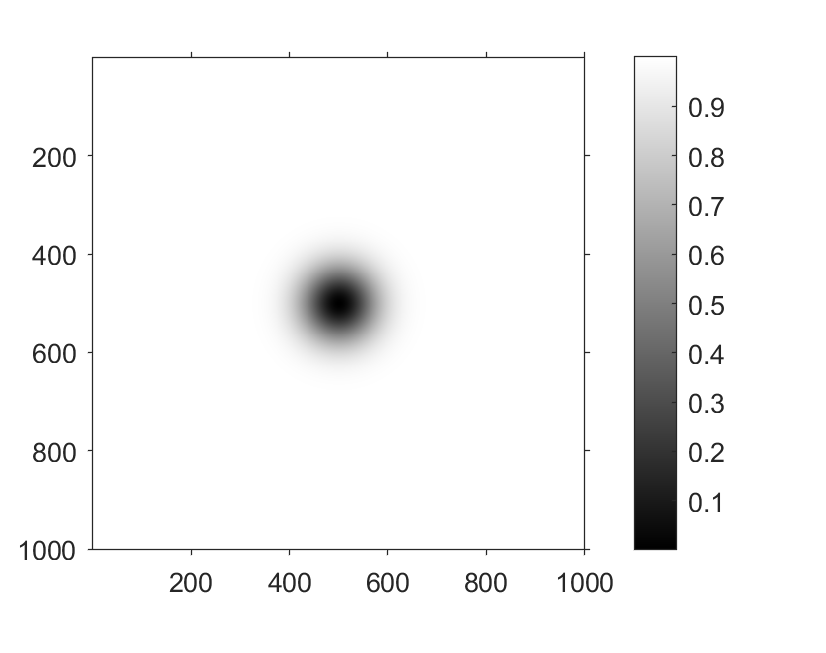}
	\includegraphics[trim = 0 22 0 0, scale=0.22]{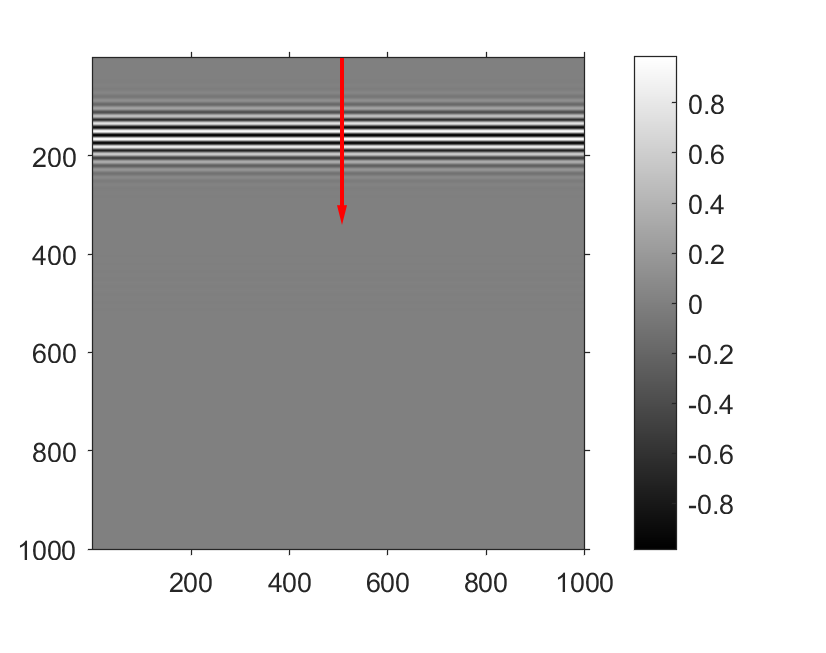}
\end{center}
\caption{\small The setup. All plots are of the real parts. 
Left: The   initial condition. Center:   the non-linearity $\alpha(x)$. Right: the solution at $t=T$. 
}
\label{fig_setup}
\end{figure}

\subsection{Cubic nonlinearity, complex incident wave} 
We start with a complex probing wave.
\begin{example} [Figure~\ref{fig_cubic_complex}]
 We take the nonlinearity to be $f=\alpha(x)u^3$ as in section~\ref{sec_cubic}. The subprincipal term in \r{main_eq} then is
\[
\frac{h}2 e^{\i(-T+x\cdot\omega)}\chi^3(-T+x_\parallel)X\alpha(x_\perp,\omega), \quad  \text{where\ } X\alpha(x_\perp,\omega)= \int \alpha(x_\perp+s\omega)\, \d s. 
\]

\begin{figure}[ht]
\begin{center}
	\includegraphics[trim = 0 0 0 0 , scale=0.22]{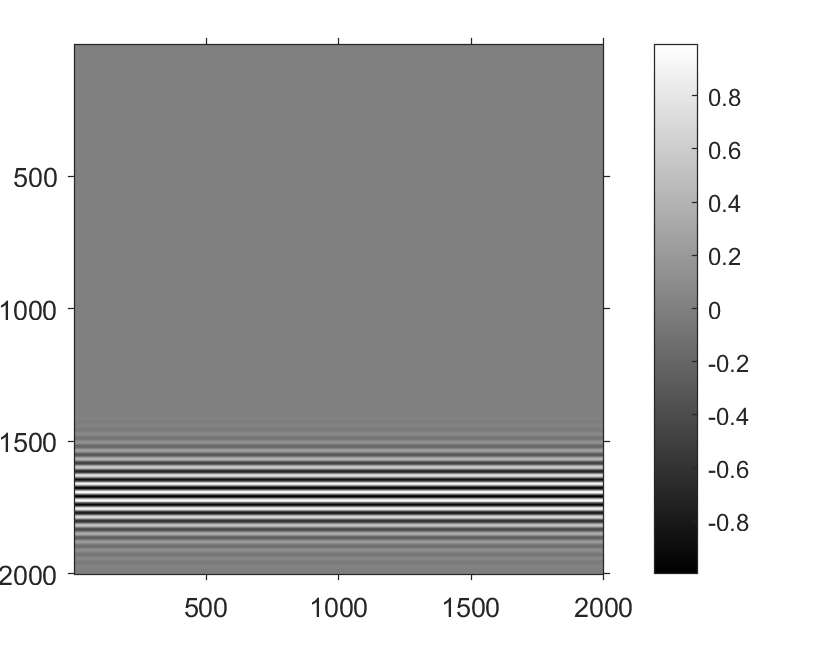}
	\includegraphics[ trim = 0 0 0 0 , scale=0.22]{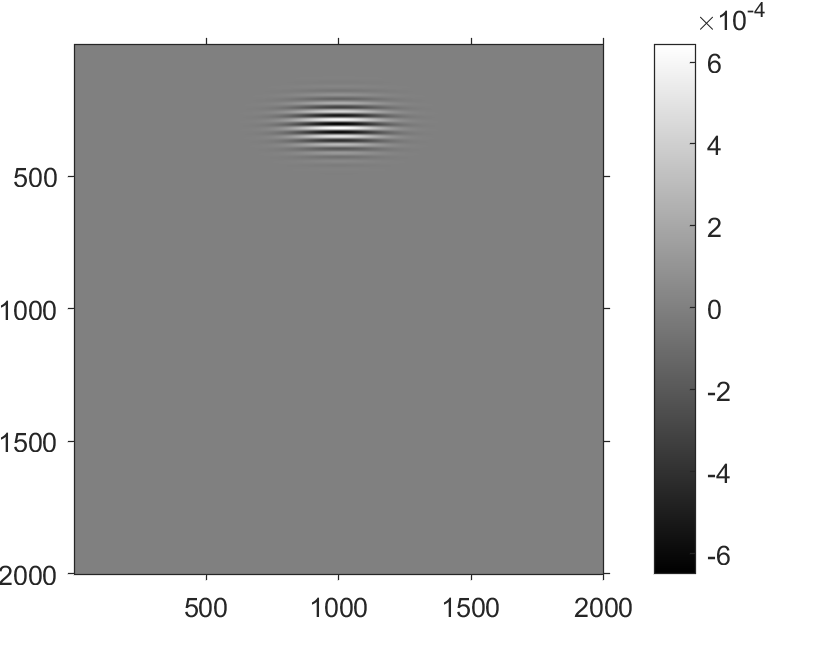}
	\includegraphics[trim = 0 0 0 0, scale=0.22]{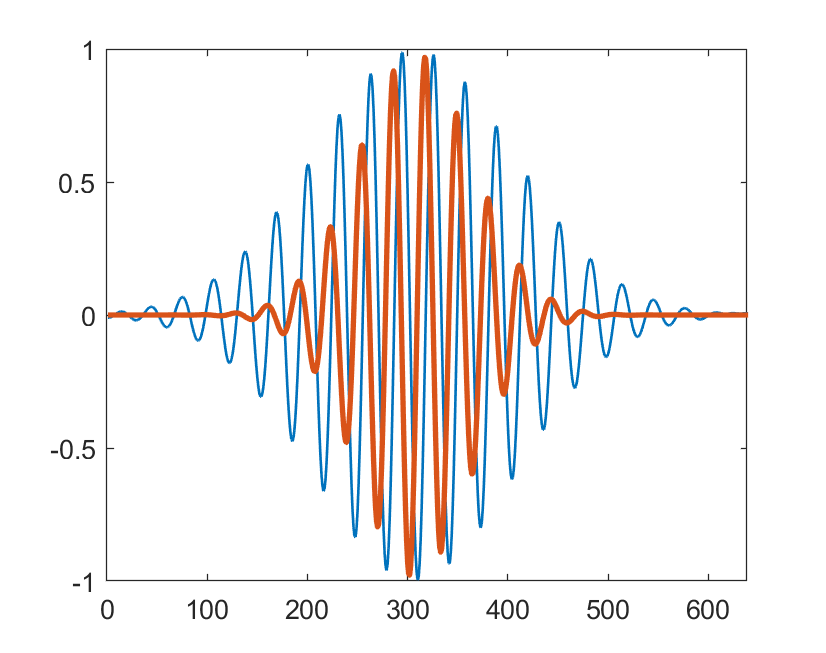}
\end{center}
\caption{\small $f(x,u)=\alpha(x)u^3$, $N=2,000$, complex $u_\text{in}$.  All plots are of the real parts. Left: The   initial condition. Center:   the subprincipal term at $t=1.4$. Right: Plot of a vertical cross section of the subprincipal term through the center (in red) and that of the linear solution (in blue). 
}
\label{fig_cubic_complex}
\end{figure}

In Figure~\ref{fig_cubic_complex}, we plot the real parts of the subprincipal term  (in red) vs.\ the principal one; with the subprincipal term rescaled to have the same maximal amplitude, i.e., divided by $h(X\alpha)/2$. We see that the subprincipal term decays faster away from the center because it has $\chi^3$ as an envelope of the oscillations instead of $\chi$. The two oscillations are shifted by $\pi/2$ which is due to the $\i=\sqrt{-1}$ factor. 
\end{example}

\subsection{More general non-linearities with separated variables, complex incident wave} \label{sec_num_sep} 
Assume $f(x,u) = \alpha(x) F_0(|u|^2)u$.   
Then \r{main_eq} takes the form
\be{main_eq_cubic_g}
\Lambda(u_\text{in})  = e^{\i \phi/h} \chi(\phi)+    \i \frac{h}2 e^{\i \phi/h}(X\alpha) \chi(\phi) F_0(\chi^2(\phi)) +O(h^2), \quad \phi:=-T+x\cdot\omega.
\ee
The envelope of the oscillations then is proportional to $(X\alpha) \chi(-T+x_\parallel) F_0(\chi^2(-T+x_\parallel))$.

\begin{example}[Figure~\ref{fig_cubic_complex_g}]
We choose $F(u) := F_0(|u|^2)u= 1.5\sqrt{3}(1-|u|^2)u$. It has a maximum $1$ on the interval $[0,1]$. Then we chose $\chi(s)=\exp(-s^2/0.02)$. A plot of the function $F\circ\chi= F_0(\chi^2(\cdot)) \chi(\cdot)$ and $-F\circ\chi$ are shown on Figure~\ref{fig_cubic_complex_g}, left. By \r{main_eq_cubic_g}, it must be the envelope of the oscillations of the subprincipal term and the computed profile in Figure~\ref{fig_cubic_complex_g}, right, confirms that. The curve in red is the subprincipal term divided by $h(X\alpha)/2$ as above.  

\begin{figure}[ht]
\begin{center}
	\includegraphics[ trim = 0 0 0 0 , scale=0.22]{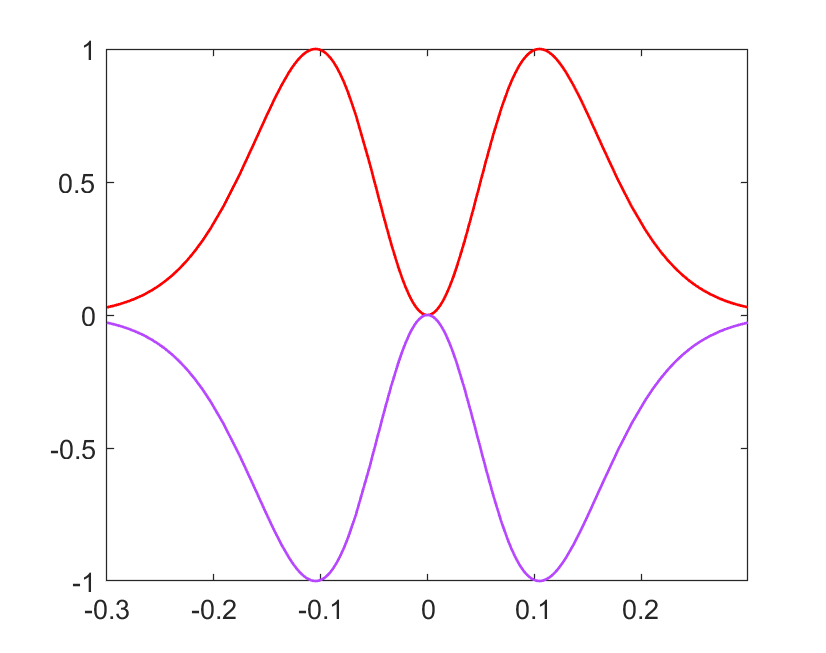} \hspace{20pt} 
	\includegraphics[trim = 0 0 0 0, scale=0.22]{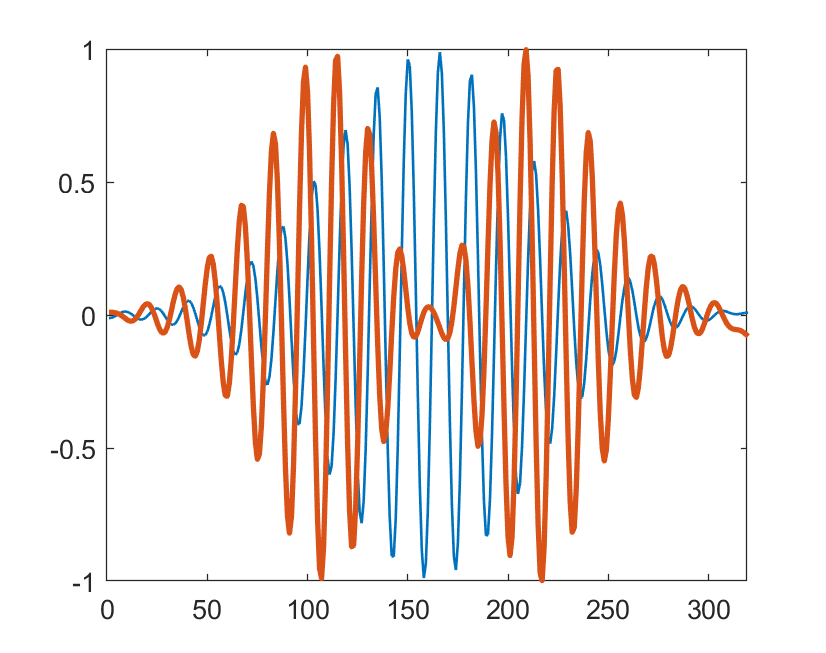}
\end{center}
\caption{\small  $f(x,u)=\alpha(x) 1.5\sqrt{3}(1-|u|^2)u$, $N=1,000$, complex $u_\text{in}$.  All plots are of the real parts. Left: the   theoretical envelope $F\circ \chi$ of the oscillations.  Right: a plot of a vertical cross section of the subprincipal term through the center, rescaled (in red) and that of the linear solution (in blue). 
}
\label{fig_cubic_complex_g}
\end{figure}

The recovery of $\g$ then goes along the following lines. We fix $x_\parallel$, which fixes $M:= \chi(-T+x_\parallel)$. Then the subprincipal term in \r{main_eq_cubic_g} recovers $F(M) X\alpha(x_\perp,\omega)$. Knowing this for all $\omega$ recovers $F(M)\alpha(x)$. Now, varying $x_\parallel$, we vary $M$ on the range of $\chi$, which recovers $F(M)\alpha(x)$. It is also worth noticing that the envelope of the oscillations in Figure~\ref{fig_cubic_complex_g}, right, is proportional to $F\circ\chi$, which recovers $F$, and therefore, $f$, up to a constant factor even without inverting the X-ray transform. 
\end{example}

\subsection{Non-linearities with separated variables, a real incident wave} 
We assume a non\-linearity as in section~\ref{sec_num_sep}. Then $f(x,u)= \alpha(x) F(u)$ by definition, where $F(u) = f_0(|u|^2)u$. We have
\be{N2}
\gamma_m(x,M) = \alpha(x)\tilde \gamma_m(M), \quad \tilde \gamma_m(M):=\frac4{M\pi}  \int_{0}^1 \left[F(Mq ) \right] T_m(q) \frac{\d q} {\sqrt{1-q^2}} . 
\ee
see \r{R3}. By \r{R8a}, \r{R11b},
\be{N3}
X\gamma_k = (X\alpha) \tilde\gamma_m(-T+x_\parallel).
\ee
Then the subprincipal term in \r{R8} takes the form
\be{N4}
h X\alpha(x_\perp) \chi(-T+x_\parallel)\sum_{k\ge1, \, \textrm {odd}}  \frac1{2k} \sin\frac{k(-T+x_\parallel)}{h} \tilde\gamma_k(\chi(-T+x_\parallel)).
\ee


Assume now that $F$ is a polynomial. Then $F(u)=\sum_{m\ge1,\,\textrm{odd} } F_m u^m$. There is a well known expansions of monomials $q^m$ into Chebyshev polynomials; and we can use that in \r{N2}. We will do this for polynomials $F$ of degree five only. We have
\[
q=T_1(q), \quad q^3 = \frac14 T_3(q) +\frac34 T_1(q), \quad q^5 = \frac1{16} T_5(q) +\frac{5}{16}T_3(q) +\frac58 T_1(q).
\]
Then 
\[
\tilde \gamma_1 = F_1 +\frac34F_3M^2 +\frac58 F_5 M^4, \quad \tilde \gamma_3 = \frac14  F_3 M^2+ \frac5{16}F_5 M^4  , \quad \tilde\gamma_5 = \frac1{16} F_5M^4 .
\]
The subprincipal term then takes the form, see \r{N4},
\be{N4a}
\begin{split}
h(X\alpha) &M\bigg[ \frac12 \left( F_1 +\frac34F_3M^2 +\frac58 F_5 M^4\right) \sin\frac{\phi}h \\
&  \quad +\frac16\left(     \frac14  F_3 M^2+ \frac5{16}F_5 M^4   \right) \sin\frac{3\phi}h  + \frac1{10} \cdot\frac1{16} F_5M^4\sin\frac{5\phi}h
\bigg]
\end{split}
\ee
In that case, we can easily demonstrate the two algorithms for recovery of the non-linearity. To make things simple, assume that $\alpha$ is known. If we know \r{N4a} \textit{for a all} $M$ in some interval $[0,K]$, then we can easily recover $F_1$, $F_2$ and $F_3$ from the first harmonic in \r{N4a}, which is proportional to $\tilde\gamma_1$. On the other hand, if $M$ is \textit{fixed}, we can recover the three $\tilde\gamma_k$, and then recover $F(Mq)$ by its Chebyshev expansion.

\begin{example}[cubic nonlinearity, Figure~\ref{fig_cubic_real}]
In particular, if $F(u)=|u|^2u$ (equal to $u^3$ for $u$ real), we get $\tilde \gamma_1=3M^2/4$, $\tilde \gamma_3=M^2/4$, all other zero, see \r{Rp0}; therefore, the subprincipal term is 
\be{N5}
h X\alpha(x_\perp) \chi^3(-T+x_\parallel) \bigg( \frac34 \sin\frac{-T+x_\parallel}{h} + \frac1{24} \sin\frac{3(-T+x_\parallel)}{h}\bigg).
\ee

In Figure~\ref{fig_cubic_real}, left, we plot a vertical cross-section of $u_{t=T}$ and that of the difference $(u-u_L)/(h X\alpha)$ at $t=T$. The theoretical maximum of that difference would be $3/4+1/24\approx 0.79$ (modulo $O(h)$ because it would depend on the phase shift of the oscillations in \r{N4} related to $x_\parallel=T$ maximizing $\chi$ there). This is in good agreement with the experimental plot. 

Next to that plot, in Figure~\ref{fig_cubic_real}, right, we plot the modulus of the Fourier transform of that difference on a log scale. We can see a peak at the incident frequency: the first (the carrier) harmonic, the third harmonic affecting the subprincipal term (and the lower order ones) and the fifth one affecting the lower order terms starting from the sub-subprincipal one. 
\begin{figure}[ht]
\begin{center}
\includegraphics[trim = 0 0 0 0, scale=0.22]{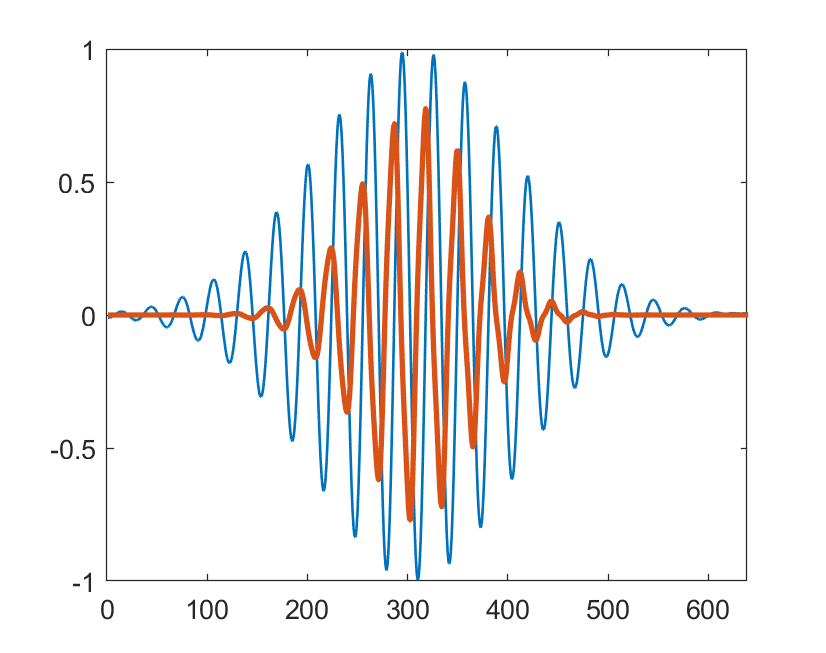}\hspace{20pt}
	\includegraphics[ trim = 0 0 0 0 , scale=0.22]{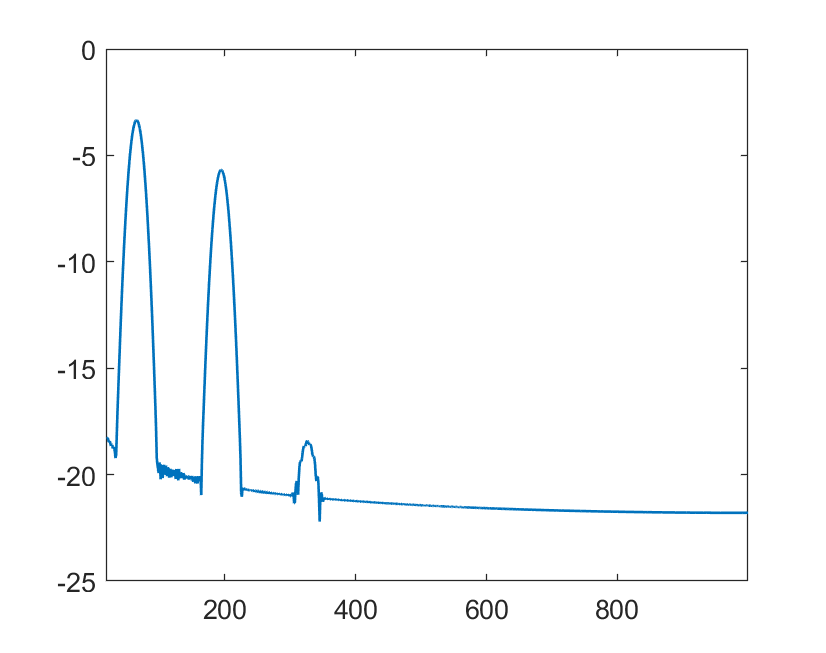}  
\end{center}
\caption{\small $f(x,u)=\alpha(x) u^3$,  $N=2,000$, real $u_\text{in}$. Left: a plot of a vertical cross section of $u-u_L$ (scaled) through the center (in red) and that of $u$ (in blue). Right: the  modulus of the Fourier transform on a log scale of it: the first, third harmonics dominate, and the fifth  one is coming from the third order term. 
}
\label{fig_cubic_real}
\end{figure}
\end{example}

\begin{example}[Figure~\ref{fig_pol_real}] 
In the next example, we choose a polynomial of degree three in order to make the third harmonics not just measurable but also visible. We choose $F_1=1$, $F_3=-1.9$, i.e.,
\[
F(u) = u-1.9u^3.
\]
The first term represents a linear potential $V=1$, actually. 
The coefficients are chosen so that $\tilde \gamma_1=0$ for $M=0.84$, which would kill the first harmonic when $\chi=0.84$ (close to its peak $\chi=1$). We plot the theoretical profile of the subprincipal term divided by $h$,  versus the computed one in Figure~\ref{fig_pol_real}. 
\begin{figure}[ht]
\begin{center}
\includegraphics[trim = 0 0 0 0, scale=0.22]{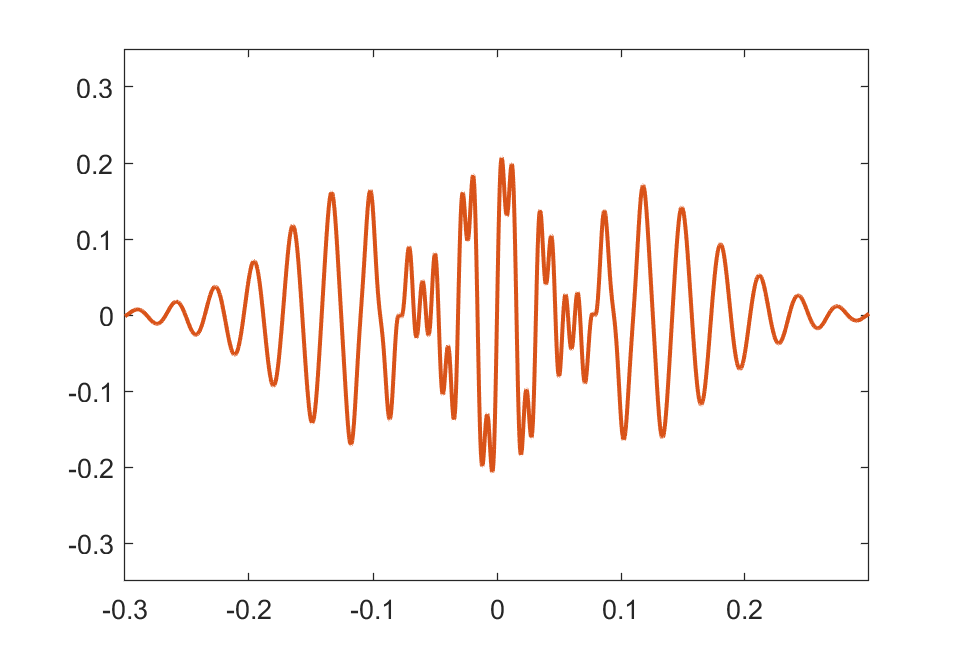}\hspace{20pt}
	\includegraphics[ trim = 0 0 0 0 , scale=0.22]{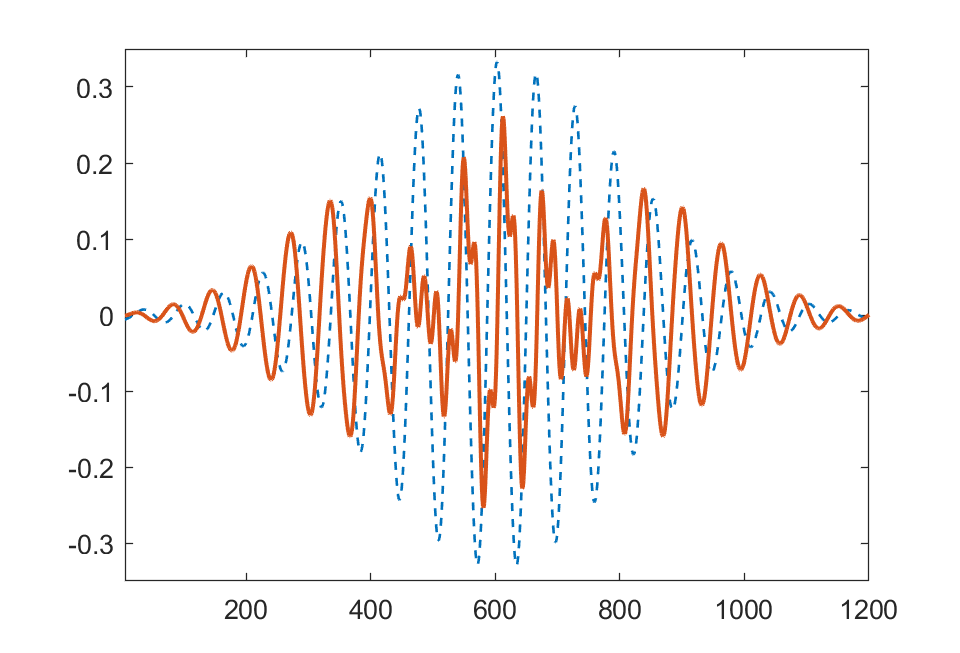}  
\end{center}
\caption{\small   $f(x,u)=\alpha(x)(u-1.9u^3)$, $N=4,000$, real $u_\text{in}$. Left: a plot of the theoretical vertical cross section of $(u-u_L)/h$. Right: the computed one vs.\ $0.33u_L$. 
}
\label{fig_pol_real}
\end{figure}
There is no perfect match because of numerical dispersion: higher frequencies travel slower. Still, one can see evidence of the third harmonics near the center part. The effect is much stronger and eclipses the first harmonic approximately  where $\chi$ is closer to $0.84$. The linear solution is plotted divided by $3$.
\end{example}

\begin{example}[quadratic non-linearity, Figures~\ref{fig_zero}, \ref{fig_zero2}]
Let $f(x,u)=\alpha(x)u^2$. We recall the analysis in section~\ref{sec_q2}. 
We compute a linear solution  $u_L$ first, i.e., with $\alpha=0$. It is equal to $u_\textrm{in}^\mathbb{R}$, of course, which also represents the first term in \r{E01}. Then we compute $u_{0,\textrm{theor}}^{(0)}$ as a numerical solution of \r{E03}. This would be the theoretical zeroth mode, up to a lower order term. Next, we compute $u$ itself. The zeroth mode $u_0^{(0)} $ is relatively small since its size depends on $\chi$ as well (which has small ``support'' in our case), and it propagates in all directions; therefore it spreads. For this reason, we do not plot $u|_{t=T}$; it looks more or less as the unperturbed $u_L|_{t=T}$ in this case. In Figure~\ref{fig_zero}, we plot $u-u_L$, $u_{0,\textrm{theor}}^{(0)}$, and $u-u_L-u_{0,\textrm{theor}}^{(0)}$   at $t=T$. According to \r{E01}, we should have
\[
\begin{split}
u-u_L &= u_{0,\textrm{theor}}^{(0)}+ O(h),\\
u-u_L-u_{0,\textrm{theor}}^{(0)} &= h\left( u_1^{(0)} -2 (\Im a_1^{(1)})\sin(\phi/h) -2 (\Im a_1^{(2)})\sin(2\phi/h)\right) +O(h^2).
\end{split}
\]
Therefore, the first two plots should be very close. Indeed, they are, and in $u-u_L$, we see a slight hint of the oscillations due to the subprincipal term.
\begin{figure}[ht]
\begin{center}
	\includegraphics[trim = 0 0 0 0 , scale=0.22]{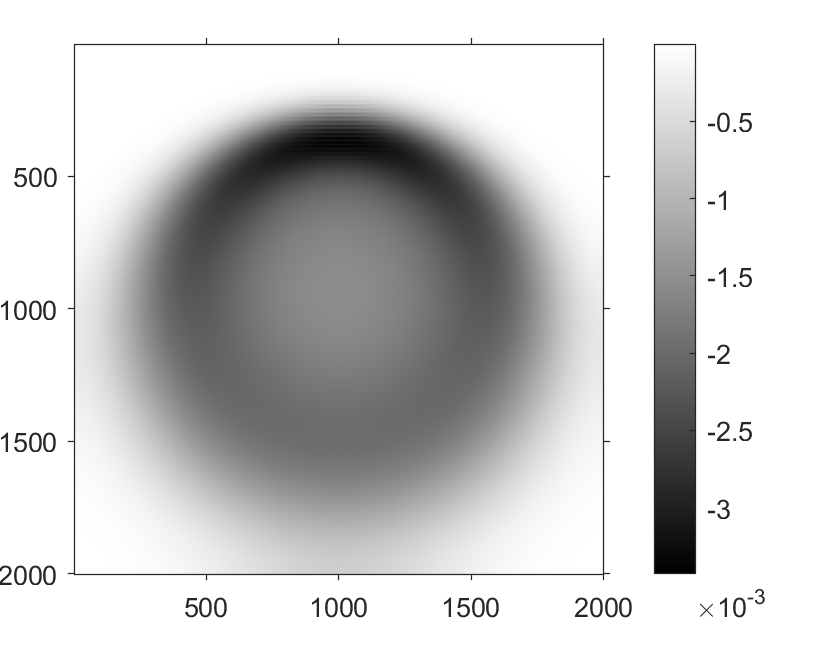}
	\includegraphics[ trim = 0 0 0 0 , scale=0.22]{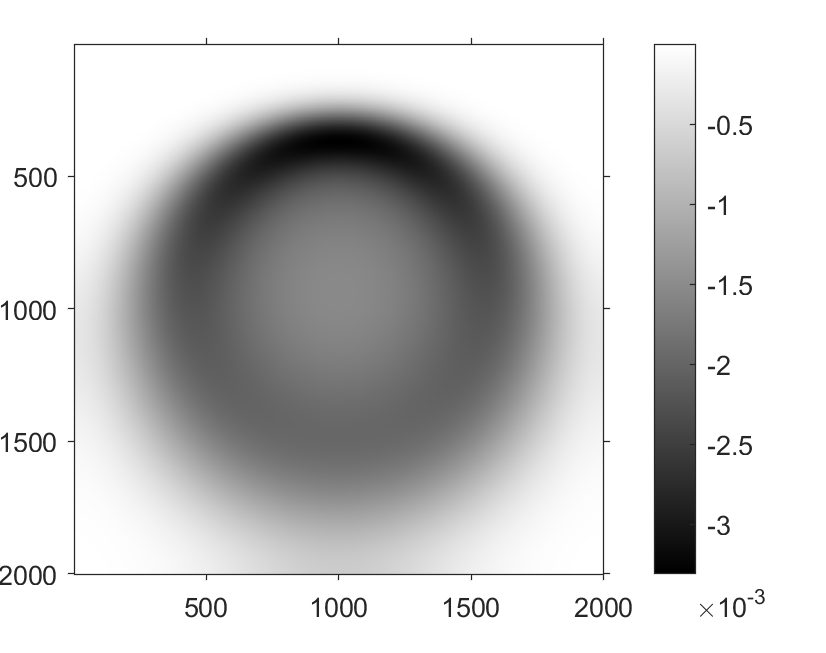}
	\includegraphics[trim = 0 0 0 0, scale=0.22]{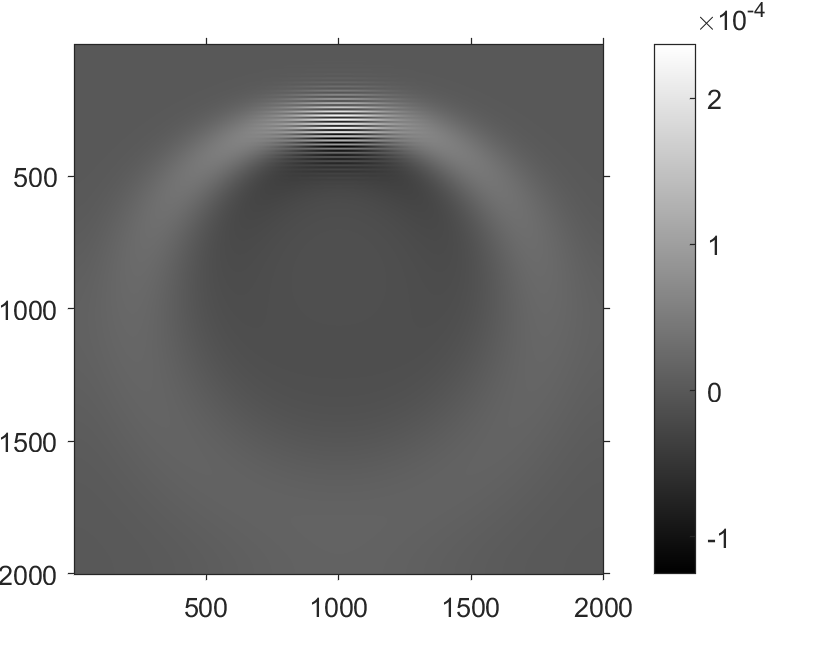}
\end{center}
\caption{\small $f(x,u) = \alpha(x)u^2$, $N=2,000$,   $t=T=1.4$.  
Left: $u-u_L$,  represents the computed zeroth harmonic up to $O(h)$. Center:   $u_{0,\textrm{theor}}^{(0)}$, the theoretical zeroth harmonic up to $O(h)$. Right: The difference of the two, representing the subprincipal term up to $O(h^2)$. 
}
\label{fig_zero}
\end{figure}
The third plot represents the difference $u-u_L-u_{0,\textrm{theor}}^{(0)}$, and we see the oscillations and the contribution of the subprincipal zeroth order term $u_1^{(0)}$. Note that the third plot is on a different scale --- about 15 times smaller. 

In Figure~\ref{fig_zero2}, we plot vertical profiles of $(u-u_L)/h$, left, and $(u-u_L- u_{0,\textrm{theor}}^{(0)})/h$, right. Even though $u_0^{(0)}$ is a principal order term, as mentioned above, it is much smaller than the leading cosine term but still larger than the harmonic frequencies in the subprincipal term. The red curve on the left is an approximation of $h^{-1} u_0^{(0)}+ u_1^{(0)}$ plus the subprincipal oscillatory terms. One can see the second harmonic mostly but  $u_0^{(0)}$ dominates. With that low frequency wave subtracted (computed as a solution of \r{E03}), we see, on the right, the second harmonic mostly plus the subprincipal term of the zeroth wave $u_1^{(0)}$. Numerical calculations, not shown, using the formula \r{E04} to compute the theoretical amplitude of the second harmonic, agree very well with the plot in Figure~\ref{fig_zero2}, right. 

\begin{figure}[ht]
\begin{center}
	\includegraphics[trim = 0 0 0 0 , scale=0.25]{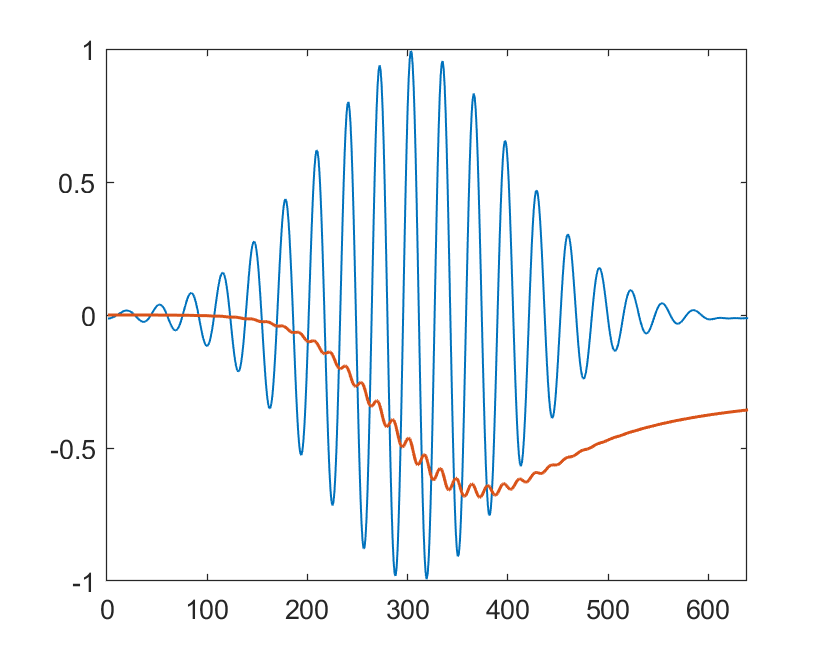}
	\includegraphics[ trim = 0 0 0 0 , scale=0.25]{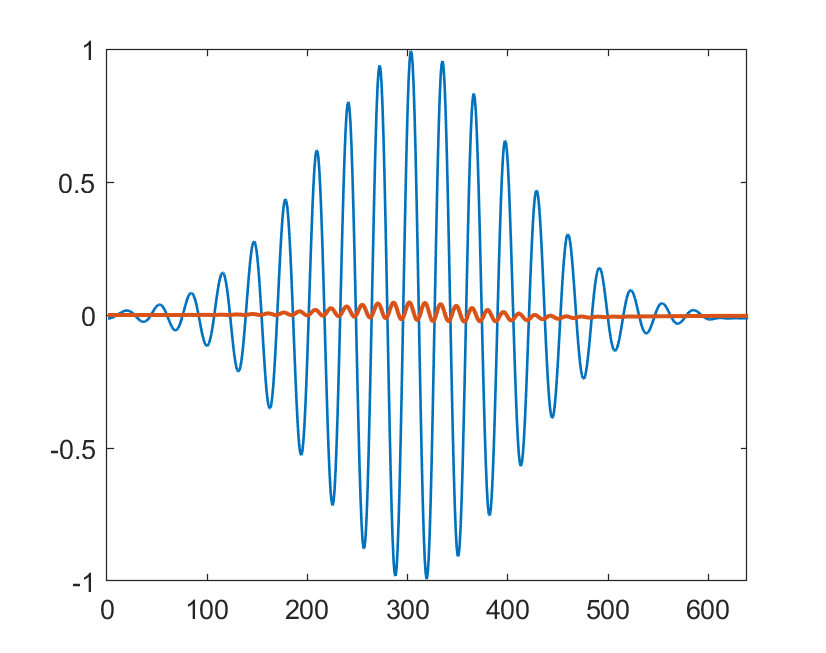}
\end{center}
\caption{\small As in Figure~\ref{fig_zero}. Vertical profiles of $(u-u_L)/h$, left, and $(u-u_L- u_{0,\textrm{theor}}^{(0)})/h$, right; both plotted against $u$ (in blue). 
}
\label{fig_zero2}
\end{figure}

\end{example}
 
\section{Remarks} \label{sec_remark} 
In our main results, we do not impose assumptions on $f$ guaranteeing existence and uniqueness of solutions of \r{1} for $t\in[0,T]$ with arbitrary, even smooth but not small data. We prove that \r{1} is solvable with the initial conditions \r{10a}, or its real part, for $h\ll1$; and that the solution is smooth. By Theorem~\ref{unique-1}, this solution is unique in the class of the bounded solutions at least (then the bound $\|u_j\|_{L^\infty}\le M$ in Theorem~\ref{unique-1} holds). We do not exclude a priori existence of other, unbounded solutions. If $f$ satisfies the assumptions of Theorem~\ref{thm_K} however, such solutions do not exist.

 Assuming that $f=f(t,x,u)$ depends on $t$ as well poses no problems in the asymptotic analysis. In the odd case, we recover the light-ray transform of $\g(t,x,p)$ instead of the X-ray one of $\g(x,p)$, and similarly for real probing fields and $f(t,x,u)=\alpha(t,x)u^{2m}$. 
 We would need to involve another one-dimensional ``delay'' parameter in the incident waves however.  The light ray transform, known even  over a bounded time interval recovers $\g$ (for any fixed $p$) uniquely in the domain covered by those rays \cite{S-support2014}. 
 The methods extend naturally to a wave equation of the type $u_{tt}-\Delta_gu=0$ related to a Riemannian metric or even to $\Delta_g$ involving a magnetic and an electric potential, as long as there are no caustics. 
  In fact, an electric potential term $Vu$ can be absorbed in the non-linearity. Even Lorentzian wave operators can be treated. On the other hand, the inversion of the light ray transform on non-Minkowski manifolds is a more delicate problem. 

One can pose the problem as a boundary value problem, in principle, on a cylinder $[0,T]\times \bo$, given a certain (non-linear) outgoing Dirichlet-to-Neumann  map; where $\Omega\subset\R^n$ is a fixed domain. Under the assumption that the $x$-support of the non-linearity is contained in $\Omega$,  a  reduction from our formulation to such one and back is trivial. If we do not have the support assumption, we would have to deal with the boundary value problem for \r{1} first, a task we would like to avoid for the purpose of this work. On the other hand, in authors' view, the  Dirichlet-to-Neumann  map formulation for the wave equation is a bit artificial, while probing the medium with waves coming from outside at $t=0$ and measuring them outside again, for $t=T$ is a more natural formulation. 

The compactness of $\supp_x f$ does not seem to be needed. It certainly simplifies the exposition though. 

One can use probing waves with  support localized near a point of the type $u_\textrm{in}  = e^{\i(-t+x\cdot\omega)}\psi(x)$ (or its real part), $\psi\in C_0^\infty$, at some initial moment, and an appropriate initial condition for $u_t$, compare with \r{10a} or \r{10real}. It would be simpler to  multiply \r{10a} or \r{10real} by a $C_0^\infty$ function of $x_\perp$. Note that this localization would be $h$-independent, this would add more terms to the asymptotic expansion but it will not change the main idea. 

We restrict the space dimension to $n=2$ or $n=3$ because the stability estimate \r{stability2} in Theorem~\ref{stability} has a short proof then. 

\section{Discussion} \label{sec_discussion}
There has been an increased interest in inverse problems for non-linear hyperbolic equations recently. The pioneering works \cite{KLU-18} and \cite{LassasUW_2016} suggested the higher order linearization idea mentioned in the introduction: send a wave $u_0=\eps_1 u_1+\eps_2 u_2 +\eps_3 u_3 +\eps_4 u_4$, where $u_j$ are chosen so that they carry conormal singularities colliding at a chosen in advance point in time-space. This collision creates a point source through non-linear interaction which emits a spherical wave in the $\eps_1\eps_2 \eps_3\eps_4$ term as all $\eps_j\to0$. This term must be separated from all other, and its (weak) singularity has to be measured to recover the non-linearity at that point in time-space. This method has been used in other papers, see, e.g., \cite{Hintz1,Hintz2,LassasUW_2016, LUW1, lassas2020uniqueness, Hintz-U-19}. In \cite{OSSU-principal}, three such waves are sent and each one is a (linear) Gaussian beam. 

In \cite{S-Antonio-nonlinear}, we proposed a different direction: send a wave which propagates in the so-called weakly non-linear regime. It has an amplitude $h^{-1/2}$ and a wavelength $\sim h$. The math theory of the weakly non-linear propagation has been developed in  \cite{Metivier-Notes,Metivier-Joly-Rauch, Joly-Rauch_just, Donnat-Rauch_dispersive, Dumas_Nonlinear-Geom-Optics, JMR-95, Rauch-geometric-optics} (and other works), and it is known in the physics literature as well. Most of those work are on fist order non-linear systems. 
 The waves there have the asymptotic behavior $u\sim h^p U(\phi/h,t,x)$, where $p$ is chosen so that the non-linearity does not affect the eikonal equation for $\phi$ but affects the leading transport equation. In the equation $\Box u+\alpha |u|^{J-1}u=0$, for example, one gets $p=-1/(J-1)$; in particular $p=-1/2$ for a cubic non-linearity. The first transport equation is a non-linear ODE, or a system of such; and we showed that its solution induces a phase shift proportional to the X-ray transform of $\alpha$, which allows us to recover $\alpha$. The essential difference of the approach in \cite{S-Antonio-nonlinear} with the higher order linearization method is that we propagate waves in an actual non-linear regime; and the signal carrying the useful information is the principal one instead of a signal or order $-4$. 

We want to emphasize that none of the approaches above can recover general non-linearities $f(x,u)$ away from the two extremes $u\sim0$ and $u\sim\infty$. The first approach relies on asymptotically small waves having the potential to recover the Taylor expansion of $f(x,u)$ about $u=0$. In fact, this is what is done in \cite{LassasUW_2016}, starting with terms of order $4$ and higher. The second one requires an asymptotic expansion of $f$  w.r.t.\ $u$ when $u\to\infty$ (at least as a leading term), requires strong assumptions on $f$ for solvability, and one would expect that it has the potential to reliably recover that asymptotic behavior only, see also \cite{S-Antonio-nonlinear}.

This brings us to the main idea of this paper: use solutions for which $p=0$ above, i.e., $u\sim 1$. Then the non-linearity affects the subprincipal term but not the principal one of high-frequency solutions. We would expect to need high-frequency solutions in order to get good resolution. 

The geometric optics analysis in the papers cited above, see, e.g., \cite{Metivier-Notes} for a survey, when applied to the wave equation, is not restricted to polynomial non-linearities only (they are all $x$-independent though) but in effect, they always rely in the Taylor expansion of $f$ about $u=0$. In many of them, the small parameter $h$ (called there $\eps$) is a part of the PDE as well, for example the derivatives could be replaced by their semi-classical ones $h\p_t$, $h\p_x$, or similar. This is equivalent to rescaling time and/or space. This would change the value of $p$ above for which the propagation is in the weakly non-linear regime, and it could make it $p=0$. This happens for the PDE $h\Box u+\alpha |u|^{J-1}u=0$, for example, which is obtained from its $h$-independent version after the scaling $t'=h^{1/2}t$, $x'=h^{1/2} x$. We are studying $h$-independent PDEs however over $h$-independent domains in time-space. 

We also want to mention the method of linearization near a non-zero solution initiated by Isakov. In \cite{Kian-semilinear}, a uniqueness result is proven when there are also internal measurements at $t=T$ as well.

\appendix 
\section{Properties of Solutions of Semilinear Wave Equations}\label{sec_app}

We prove some results for semilinear wave equations which are known to experts, see \cite{Kap1,Kap2,ShStruwe2},  but their proofs do not seem to be readily available and so we include them here for completeness and because they are elementary. We should also mention that part of this discussion, including a different version of Theorem \ref{unique-1}, is contained in \cite[Ch.~6]{Hormander-nonlinear-book},

Let $g=\sum_{j,k=1}^n g_{jk}(x) \d x_i \d x_k$ be a $C^\infty$ Riemannian metric in $\mathbb{R}^n$, $n\geq 2$, 
such that $g_{jk}(x)=\delta_{jk}$ if $|x|\geq R$,  for some $R>0$,  and  let $\Delta_g$ be the corresponding (negative) Laplacian:
\[
\Delta_g=\frac{1}{\sqrt{|g(x)|}} \partial_{x_j} \sqrt{|g(x)|} g^{jk}(x) \partial_{x_k}, \;\  |g(x)|=\det (g_{jk}(x)), \text{ and } 
[g^{jk}(x)]=[g_{jk}(x)]^{-1}.
\]

We start by recalling an energy estimate for solutions of the linear wave equation:
If $u(t,x)$ satisfies
\[
\begin{split}
 u_{tt}-\Delta_g u &= G(t,x), \\
u(0,x)= \varphi(x) &,  \;\ u_t(0,x)=\psi(x),
\end{split}
\]
we have
\[
u(t) = \cos(t\sqrt{-\Delta_g})\varphi + t\sinc (t\sqrt{-\Delta_g})  \psi + \int_0^t (t-s)\sinc\big((t-s)\sqrt{-\Delta_g})\big)G(s)\, \d s,
\]
where $\sinc(s)=\sin(s)/s$, and we suppressed the dependence on $x$. Then for any $T>0$,  and $s\geq 0$, 
\be{Eest}
\begin{split}
& \sup_{[0,T]} \| u(t,\cdot)\|_{H^{s+1}(\mathbb{R}^n)}+ \sup_{[0,T]} \|\p_tu(t,\cdot)\|_{H^s(\mathbb{R}^n)} \leq  CT E_{s+1}(0) ,\\
&  \text{ where } 
E_{s+1}(0)=  \|\varphi\|_{H^{s+1}(\mathbb{R}^n)}+  \|\psi\|_{H^s(\mathbb{R}^n)}+ \int_0^T \|G(t,\cdot)\|_{H^s(\mathbb{R}^n)} \d t,
\end{split}
\ee
provided the right hand side is finite.

We recall that a particular case of Gronwall's inequality states that if $f$ and $g$ are continuous functions on $[0,T]$,  and if
\be{GRW}
\begin{split}
 f(t) \leq g(t) +C  \int_0^t f(s) \d s,  \text{ then  for  } t\in [0,T]  \;\   f(t) \leq C_1g(t).
\end{split}
\ee

Next we prove an uniqueness theorem for solutions of a semilinear wave equation and we remark that a different version of this result can be found in  \cite[Theorem~6.4.10]{Hormander-nonlinear-book}.

\begin{theorem}\label{unique-1} Let $h(t,x,u)$ be a $C^\infty$ function which is compactly supported in $x$ and let 
$r(t,x)\in L^1(\mathbb{R}; L^2(\mathbb{R}^n) )$.    If 
\[
u_j \in C([0,T];  H^{1}(\mathbb{R}^n)) \cap C^1([0,T];  L^2(\mathbb{R}^n)), \;\ j=1,2,
\]
are such that there exists a constant $M>0$ such that $\|u_j\|_{L^\infty([0,T] \times \mathbb{R}^n)}\leq M$,  and $u_j$,  $j=1,2$,  
are weak solutions of the equation
 \be{Weq0}
\begin{split}
 u_{tt}-\Delta_g u + h(t,x,u) =r(t,x), \\
u(0,x)= \varphi(x) ,  \;\ u_t(0,x)=\psi(x),
\end{split}
\ee
then $u_1=u_2$.  
\end{theorem}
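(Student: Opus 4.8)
The plan is to subtract the two equations and apply an energy estimate to the difference $w = u_1 - u_2$, using that the nonlinearity $h(t,x,u)$ is Lipschitz in $u$ uniformly on the compact set where $|u_j|\le M$. Writing $w$ as a weak solution of $w_{tt} - \Delta_g w = h(t,x,u_2) - h(t,x,u_1)$ with zero Cauchy data, the right-hand side can be written as $h(t,x,u_2)-h(t,x,u_1) = -b(t,x)w$, where $b(t,x) = \int_0^1 \partial_u h\big(t,x, u_2 + \tau(u_1-u_2)\big)\,\d\tau$. Since $\partial_u h$ is continuous and $h$ is compactly supported in $x$, and since $|u_j|\le M$, we get $\|b\|_{L^\infty([0,T]\times\mathbb{R}^n)} \le L$ for some constant $L$ depending only on $h$ and $M$; moreover $b(t,\cdot)$ is supported in a fixed compact set. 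Hence the forcing term $G = -bw$ satisfies $\|G(t,\cdot)\|_{L^2} \le L\|w(t,\cdot)\|_{L^2} \le L\|w(t,\cdot)\|_{H^1}$.

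Next I would apply the linear energy estimate \r{Eest} with $s=0$ to $w$ on the interval $[0,t]$ for each $t\le T$, which gives
\[
\|w(t,\cdot)\|_{H^1(\mathbb{R}^n)} + \|\p_t w(t,\cdot)\|_{L^2(\mathbb{R}^n)} \le C t \int_0^t \|G(\sigma,\cdot)\|_{L^2(\mathbb{R}^n)}\,\d\sigma \le CTL \int_0^t \|w(\sigma,\cdot)\|_{H^1(\mathbb{R}^n)}\,\d\sigma,
\]
since the Cauchy data of $w$ vanish. Setting $F(t) = \|w(t,\cdot)\|_{H^1(\mathbb{R}^n)}$, which is continuous on $[0,T]$ by the regularity assumption on $u_j$, this reads $F(t) \le C' \int_0^t F(\sigma)\,\d\sigma$ with $C' = CTL$ and with $g\equiv 0$ in the notation of \r{GRW}. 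Gronwall's inequality \r{GRW} then forces $F(t)\le C_1\cdot 0 = 0$ for all $t\in[0,T]$, so $w(t,\cdot)=0$ in $H^1(\mathbb{R}^n)$ for every $t$, i.e. $u_1 = u_2$.

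The one point requiring a little care — and what I expect to be the main (though modest) obstacle — is the justification that $w$ is a genuine weak solution of the linear wave equation with the claimed $L^1_tL^2_x$ forcing, so that the Duhamel representation preceding \r{Eest} and hence the estimate \r{Eest} actually apply. This uses that $u_j$ are weak solutions of \r{Weq0} in the stated class, that $h(\cdot,\cdot,u_j(\cdot,\cdot))$ lies in $L^1(\mathbb{R};L^2(\mathbb{R}^n))$ (which follows from compact $x$-support, boundedness of $u_j$, and continuity of $h$), and that the mean-value representation $h(t,x,u_2)-h(t,x,u_1) = -b(t,x)w(t,x)$ is valid pointwise a.e. with $b\in L^\infty$. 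Once the difference equation is set up in this form, everything else is the routine Gronwall argument above. I would also note that the constant $C_1$ in \r{GRW} here is finite uniformly in $t\in[0,T]$, so no iteration beyond a single application is needed.
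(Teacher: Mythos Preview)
Your proof is correct and follows essentially the same approach as the paper: subtract the equations, write the difference of the nonlinearities as a bounded (compactly supported in $x$) coefficient times $u_1-u_2$, apply the linear energy estimate \r{Eest} with $s=0$ and zero Cauchy data, and conclude via Gronwall's inequality \r{GRW} with $g\equiv 0$. The paper phrases the factorization as $h(t,x,u)-h(t,x,v)=h_1(t,x,u,v)(u-v)$ for some smooth $h_1$ compactly supported in $x$, which is exactly your mean-value integral $b(t,x)$; otherwise the arguments are identical.
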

\begin{proof}

 We apply \r{Eest} to the difference $u_1-u_2$, and in this case,
\[
\begin{split}
 \left(\p_t^2 -\Delta_g\right) (u_1-u_2) = h(t,x,u_2)-h(t,x,u_1), \\
u_1(0,x)-u_2(0,x)= 0 ,  \;\ \p_tu_1(0,x) -\p_tu_2(0,x)=0.
\end{split}
\]
Since $h(t,x,u)$ is $C^\infty$,  and  compactly supported in $x$,  then there exists a $C^\infty$ function $h_1(t,x,u, v)$,  which is compactly supported in $x$,  such that
$h(t,x,u)- h(t,x,v)= h_1(t,x,u,v)(u-v)$.   We deduce from \r{Eest} that for any $t\in [0,T]$,
\[
\begin{split}
&\mathcal{M}(t):=  \|u_1(t,\cdot)-u_2(t,\cdot)\|_{H^1(\mathbb{R}^n)}+ \|\p_tu_1(t,\cdot)- \p_t u_2(t,\cdot)\|_{L^2(\mathbb{R}^n)} \leq  \\
&CT\int_0^t \| h_1(s,\cdot,u_1,u_2)(u_1-u_2)\|_{L^2(\mathbb{R}^n)} \d s.
 \end{split}
 \]
Since $u_1$ and $u_2$ are bounded, and $h_1$ is compactly supported in $x$,  then 
$\|h_1(t,\cdot,u_1,u_2)\|_{L^\infty([0,T] \times \mathbb{R}^n)}\leq M_1$  and so
\be{mtce}
\mathcal{M}(t) \leq  CT M_1     \int_0^t \|(u_1(s,\cdot)-u_2(s,\cdot))\|_{L^2(\mathbb{R}^n)} \, \d t  \leq  CT M_1   \int_0^t \mathcal{M}(s) \, \d s 
 \ee

We use Gronwall's inequality \r{GRW}, and in this  particular case of \r{mtce} $g(t)=0$ and so $\mathcal{M}(t)=0$ and therefore $u_1=u_2$. 
 \end{proof}

Next we study solutions of the semilinear equation
 \be{Weq2}
\begin{split}
 u_{tt}-\Delta_g u &+ \, \kappa(|u|^2)h(t,x,u) u= r(t,x), \\
u(0,x)= & \varphi(x),  \;\ u_t(0,x)=\psi(x),
\end{split}
\ee
which is relevant for the discussion above.  Here $r(t,x)\in L^1(\mathbb{R}; H^s(\mathbb{R}^n) )$, $h(t,x,u)\in C^\infty$ is compactly supported in $x$ and   $\kappa\in C_0^\infty(\mathbb{R})$, $\kappa(s)=0$  if $|s|>2\rho$ and $\kappa(s)=1$  if $|s|<\rho$.
 Our result for this case is the following:

\begin{theorem}\label{cut-off} Suppose $h(t,x,u)\in C^\infty$ is compactly supported in $x$.  Then for any non-negative integer $s$, and initial data $(\varphi,\psi)$ with $\varphi \in H^{s+1}(\mathbb{R}^n)$ and 
$\psi \in H^{s}(\mathbb{R}^n)$,  there exists a unique $u(t,x)$ such that $u\in C([0,T];  H^{s+1}(\mathbb{R}^n))$ and 
$\p_tu \in C([0,T];  H^{s}(\mathbb{R}^n))$ which satisfies \r{Weq2} (as a weak solution if $s$ is not large enough)
and moreover for any $T>0$, there exists a constant $K_s=K_s(T,s,\rho)>0$ such that
\be{estimate1}
\begin{split}
& \sup_{[0,T]} \| u(t,\cdot)\|_{H^{s+1}(\mathbb{R}^n)}+ \sup_{[0,T]} \|\p_tu(t,\cdot)\|_{H^{s}(\mathbb{R}^n)} \leq K_s E_{s+1}, \\
& \text{ where } E_{s+1}= \|\varphi\|_{H^{s+1}(\mathbb{R}^n)}+  \|\psi\|_{H^{s}(\mathbb{R}^n)} +  \int_0^T \|r(t,\cdot)\|_{H^s(\mathbb{R}^n)}  \, \d t.
\end{split}
\ee
\end{theorem}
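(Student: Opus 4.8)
The plan is to prove Theorem~\ref{cut-off} by a standard fixed-point argument in the space $X_T := C([0,T];H^{s+1}(\R^n))\cap C^1([0,T];H^s(\R^n))$, exploiting the fact that the cutoff $\kappa(|u|^2)$ renders the nonlinear term globally Lipschitz as a map from $X_T$ into $L^1([0,T];H^s(\R^n))$. First I would set up the Duhamel formulation: define the map $\Phi$ sending $v\in X_T$ to the solution $u$ of the \emph{linear} wave equation $u_{tt}-\Delta_g u = r(t,x) - \kappa(|v|^2)h(t,x,v)v$ with the prescribed Cauchy data, using the explicit representation and the energy estimate \r{Eest}. The key analytic input is that $N(v):=\kappa(|v|^2)h(t,x,v)v$ is supported where $|v|^2\le 2\rho$, so on that set $h(t,x,v)$ and all its $v$-derivatives are bounded (using compact support in $x$); hence $v\mapsto N(v)$ maps a ball in $L^\infty\cap H^{s+1}$ into $H^s$ with a bound depending only on $\rho$, $s$, $T$, and $\|h\|_{C^{s+1}}$, and is Lipschitz in the relevant norms on bounded sets. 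For the $H^s$ estimates when $s\ge1$ one uses the Moser-type (tame) product and composition estimates for Sobolev spaces, which are available since $n=2,3$ gives $H^{s+1}\hookrightarrow L^\infty$ for $s\ge1$, and for $s=0$ one works with weak solutions directly via the $L^2$ bound on $N(v)$ coming from the $L^\infty$ bound $|v|\le\sqrt{2\rho}$ on the support of $\kappa$.

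The second step is to run the contraction. Using \r{Eest}, $\|\Phi(v_1)-\Phi(v_2)\|_{X_t}\le CT\int_0^t\|N(v_1)-N(v_2)\|_{H^s}\,\d\tau \le CT L\int_0^t \|v_1-v_2\|_{X_\tau}\,\d\tau$ on the ball of radius $2K_0 E_{s+1}$, where $L$ is the Lipschitz constant of $N$ on that ball. A standard bootstrap—either choosing $T$ small and then iterating on successive intervals, or directly invoking Gronwall's inequality \r{GRW} applied to $\mathcal M(t):=\|u^{(k)}-u^{(k-1)}\|_{X_t}$ of the Picard iterates—gives a unique fixed point on all of $[0,T]$, because the Lipschitz constant $L$ does not grow (the cutoff caps $|u|$), so there is no blow-up and the local existence time does not shrink. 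This simultaneously yields uniqueness within $X_T$; uniqueness among merely bounded weak solutions with the stated $L^\infty$ bound then follows from Theorem~\ref{unique-1} applied with $h$ there replaced by $\kappa(|u|^2)h(t,x,u)u$ (note this replacement is legitimate because two such solutions, being bounded, stay where the difference of the nonlinearities is Lipschitz).

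The third step is the a priori estimate \r{estimate1}. Once existence of the fixed point $u$ is known, apply \r{Eest} to $u$ itself: $\|u\|_{X_T}\le CT\big(\|\varphi\|_{H^{s+1}}+\|\psi\|_{H^s}+\int_0^T\|r\|_{H^s}+\int_0^T\|N(u)\|_{H^s}\big)$. Bound $\|N(u(t))\|_{H^s}\le C(\rho,s,h)(1+\|u(t)\|_{H^{s+1}})$ using the tame estimates and the $L^\infty$ cap from the cutoff—crucially the constant is \emph{independent of $u$}—and then close the estimate with Gronwall \r{GRW}, absorbing the linear-in-$\|u\|_{H^{s+1}}$ term, to obtain $\|u\|_{X_T}\le K_s E_{s+1}$ with $K_s=K_s(T,s,\rho)$ as claimed. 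For $s\ge1$ one can also propagate regularity by differentiating the equation and treating the commutator terms with the same tame estimates.

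The main obstacle I anticipate is the careful handling of the composition $u\mapsto h(t,x,u)$ and the product with $\kappa(|u|^2)u$ in $H^s$ for $s\ge1$: one needs the Moser/Gagliardo–Nirenberg product and composition inequalities, and must verify that the constants depend only on $\rho$ (through the size of the region where $\kappa\ne0$), on $\sup_{|u|\le\sqrt{2\rho}}|\p_u^j h|$ over the compact $x$-support, and on $T$, \emph{not} on the solution's higher norms—otherwise the Gronwall step would not close with an a priori constant. The restriction $n=2,3$ is used here precisely so that $H^{s+1}\hookrightarrow L^\infty$ is available already at $s=1$, and the $s=0$ case is handled separately at the level of weak solutions where only the $L^2$ bound on $N(u)$ (immediate from $|u|\le\sqrt{2\rho}$ on $\supp\kappa$ and compact $x$-support) is needed.
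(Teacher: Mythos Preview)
Your overall architecture---Picard iteration/contraction for existence, Gronwall for the a priori bound, Moser-type composition estimates for the nonlinear term---is exactly the paper's strategy. Two points deserve comment.

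First, you invoke the restriction $n=2,3$ so that $H^{s+1}\hookrightarrow L^\infty$ for $s\ge1$, but Theorem~\ref{cut-off} is stated and proved in the paper for arbitrary $n\ge2$; the dimensional restriction enters only in Theorem~\ref{stability}. The paper never needs a Sobolev embedding to control $\|u\|_{L^\infty}$: the cutoff $\kappa$ already guarantees $|u|\le\sqrt{2\rho}$ on the support of the nonlinearity, and every coefficient $C_\beta(t,x,u)$ in the chain-rule expansion \r{diff-g} vanishes outside that set. With this $L^\infty$ cap coming from $\kappa$ (not from embedding), the Gagliardo--Nirenberg inequality \r{GN} plus H\"older gives $\|G(t,\cdot,u)\|_{H^s}\le C_s(\rho,T)\|u\|_{H^s}$ in every dimension, and Gronwall closes. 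So your reliance on $n\le3$ is unnecessary and proves less than the stated theorem.

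Second, a structural difference: the paper runs the contraction only at the $s=0$ level (where the $L^2$ Lipschitz constant of $N$ is manifestly independent of $u$), establishes the solution, and then bootstraps regularity via the a priori estimate at level $s$. Your plan is to contract directly in $C([0,T];H^{s+1})\cap C^1([0,T];H^s)$. This can be made to work, but note that the $H^s$ Lipschitz constant of $N$ for $s\ge1$ is not obviously independent of $\|v_j\|_{H^s}$ (the Moser product estimate has a term $\|\mathcal G(v_1,v_2)\|_{H^s}\|v_1-v_2\|_{L^\infty}$), so the claim that ``the Lipschitz constant $L$ does not grow'' needs more care than you indicate. The paper's two-step approach (existence at $s=0$, then a priori estimate at $s$) sidesteps this entirely.
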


\begin{proof}  Let us first consider the case $s=0$. Let  us denote $G(t,x,u)= \kappa(|u|^2) h(t,x,u) u$. Let $u_0$ be the solution of
\be{Weq3}
\begin{split}
 \p_t^2 u_{0}-\Delta_g u_0& =r(t,x), \\
u_0(0,x)= \varphi(x) &,  \;\ \p_t u_{0}(0,x)=\psi(x),
\end{split}
\ee
let $u_1$ satisfy
\be{Weq4A}
\begin{split}
 \p_t^2u_{1}-\Delta_g u_1 &+ G(t,x,u_{0})=0, \\
u_1(0,x)= 0 &,  \;\ \p_t u_1(0,x)=0,
\end{split}
\ee
and for $j>1$, let $u_j$ satisfy

\be{Weq4}
\begin{split}
\p_t^2 u_{j}-\Delta_g u_j &+ G(t,x, u_0+u_{j-1})=0, \\
u_j(0,x)= 0 &,  \;\ \p_t u_j (0,x)=0.
\end{split}
\ee

In view of \r{Eest}, and the fact that there exists $C>0$ such that $|G(t,x,u)|\leq C$, we find that
\[
\begin{split}
&\sup_{[0,T]} \| u_0(t,\cdot)  \|_{H^{1}(\mathbb{R}^n)}  + \sup_{[0,T]} \|\p_tu_0(t,\cdot)\|_{L^2(\mathbb{R}^n)}  \leq 
CT\big(  \|\varphi\|_{H^{1}(\mathbb{R}^n)}+  \|\psi\|_{L^2(\mathbb{R}^n)} + \int_0^T \|r(t,\cdot)\|_{L^2(\mathbb{R}^n)}  \, \d t\big), \\
 & \sup_{[0,T]} \| u_1(t,\cdot)\|_{H^{1}(\mathbb{R}^n)}+ \sup_{[0,T]} \|\p_tu_1(t,\cdot)\|_{L^2(\mathbb{R}^n)}  \leq 
CT \int_0^T \|G(t,\cdot,u_0(t,\cdot))\|_{L^2(\mathbb{R}^n)}\, \d t \leq CT  , \\
 & \sup_{[0,T]} \| u_j(t,\cdot)\|_{H^{1}(\mathbb{R}^n)}  + \sup_{[0,T]} \|\p_tu_j(t,\cdot)\|_{L^2(\mathbb{R}^n)}  \leq 
CT \int_0^T \|G(t,\cdot, u_0(t,\cdot)+ u_{j-1}(t,\cdot))\|_{L^2(\mathbb{R}^n)} \d t \le CT  .
  \end{split}
  \]

On the other hand, since $h(t,x,u)$ is $C^\infty$, and compactly supported in $x$, it follows that there exists a $C^\infty$ bounded function $\mathcal{G}$ such that
\[
G(t,x,u)- G(t,x,v)= \mathcal{G}(t,x,u,v) (u-v).
\]

Therefore, we have 
\[
\begin{split}
\p_t^2(u_j-u_k)- & \Delta_h( u_j-u_k) =  G(t,x, u_0+u_{j-1})- G(x, u_0+ u_{k-1})= \\
& \mathcal{G}(t,x, u_0+u_{j-1}, u_0+ u_{k-1})(u_{j-1}-u_{k-1}). 
\end{split}
\]
Therefore, again by standard energy estimates,  we have for $j,k\geq 1$, 
\[
\begin{split}
 &\sup_{[0,T]} \|u_j(t,\cdot)- u_k(t,\cdot)\|_{H^1(\mathbb{R}^n)}+ \sup_{[0,T]} \|\p_t u_{j}(t,\cdot)- \p_tu_{k}(t,\cdot)\|_{L^2(\mathbb{R}^n)} \leq  \\
 C T\int_0^T \|\mathcal{G}(t,x, &  u_0+  u_{j-1}, u_0+ u_{k-1})(u_j-u_k)\|_{L^2(\mathbb{R}^n)} \d t \leq 
 CT^2    \sup_{[0,T]} \|u_{j-1}(t,\cdot)- u_{k-1}(t,\cdot)\|_{H^1(\mathbb{R}^n)},
\end{split}
\]
with $C$ depends only on the $L^\infty$ norm of $\mathcal{G}(t,x,u,v)$ for $t\in [0,T]$,  which is of course independent of $u$ and $v$,  and so is independent of $j,k$. This shows that if $T_1$ is such that $CT_1^2\leq \frac12$, the sequence $\{u_j+u_0\}$ is Cauchy in the space $C([0,T_1]; H^1(\mathbb{R}^n) \cap C^1([0,T_1]; L^2(\mathbb{R}^n)$,  and therefore converges.   Notice that $u$ is bounded on the support of $\kappa(|u|^2)$,  so if $\eta\in C_0^\infty$, 
\[
\langle G(t,x,u),\eta\rangle= \int_{\{(t,x): |u|^2 \leq 3\rho, 0\leq t \leq T_1\}} G(t,x,u) \ \eta(t,x) \, \d t\, \d x.
\]
Also,
\[
\left|\int( G(t,x,u)-G(t,x,u_j+u_0)) \eta(t,x)\, \d t\, \d x\right| \leq C \int |u- u_j-u_0| |\eta| \, \d t\, \d x \rightarrow 0 \text{ as } j \rightarrow \infty.
\]
 Therefore, the  limit satisfies \eqref{Weq2} weakly, and this proves the existence of part of the result for $s=0$ in the interval $[0,T_1]$.  The uniqueness part follows from Theorem \ref{unique-1}.  This proves the existence and uniqueness of the solution in $C([0,T_1]; H^1(\mathbb{R}^n)) \cap C^1([0,T_1]; L^2(\mathbb{R}^n))$.  
 
 We can repeat the same argument on the intervals $[j T_1, (j+1)T_1]$ for $j=1,2, \ldots j_0$ with $T-T_1 < j_0 T_1 \leq T$ and on the interval $[(j_0+1)T_1, T],$ and so we have proved the existence and uniqueness of solutions in $C([0,T]; H^1(\mathbb{R}^n) )\cap C^1([0,T]; L^2(\mathbb{R}^n))$.    
 
 Next we prove \r{estimate1} for $s=0$.  Since $\kappa(|u|^2)h(t,x,u)$ is  bounded, it follows that
 \[
 \|G(t,\cdot,u)\|_{L^2(\mathbb{R}^n)}=\|\kappa(|u|^2)h(t,\cdot,u) u\|_{L^2(\mathbb{R}^n)}  \leq C_0(T,\rho) \|u\|_{L^2(\mathbb{R}^n)},
 \]
 and so it follows that
 \be{bdu-0}
 \begin{split}
 & \sup_{[0,T]} \| u(t,\cdot)\|_{H^{1}(\mathbb{R}^n)}+ \sup_{[0,T]} \|\p_tu(t,\cdot)\|_{L^2(\mathbb{R}^n)} \leq \\
 &  CT(\|\varphi\|_{H^{1}(\mathbb{R}^n)}+  \|\psi\|_{L^2(\mathbb{R}^n)} +  \int_0^T \|r(t,\cdot)\|_{L^2(\mathbb{R}^n)}  \, \d t+ C_0(T,\rho) \int_0^T \|u(t,\cdot)\|_{L^2(\mathbb{R}^n)}\, \d t).
  \end{split}
  \ee
  In particular this implies that if
  \[
  \begin{split}
  & \mathcal{E}_1(t)= \| u(t,\cdot)\|_{H^{1}(\mathbb{R}^n)}+\|\p_tu(t,\cdot)\|_{L^2(\mathbb{R}^n)}, \text{ and } \\
 & E_0=\|\varphi\|_{H^1(\mathbb{R}^n)}+  \|\psi\|_{L^2(\mathbb{R}^n)} +  \int_0^T \|r(t,\cdot)\|_{L^2(\mathbb{R}^n)}  \, \d t,
 \end{split}
  \]
   then for any $t\in [0,T]$, 
  \[
  \mathcal{E}_1(t)\leq  CTE_0+ C_0 CT \int_0^t \mathcal{E}_1(s) \, \d s,
  \]
  and so  in view of \r{GRW},
  \[
  \mathcal{E}_1(t) \leq C(T) E_0,  \;\ t\in [0,T],
  \]
  and this proves \r{estimate1} for $s=0$. 
 
 Next we show that if $ \varphi \in H^{s+1}$ and $\psi \in H^s(\mathbb{R}^n)$, the solution $u$  in fact satisfies $u  \in C([0,T_1]; H^{s+1}(\mathbb{R}^n) \cap C^1([0,T_1]; H^s(\mathbb{R}^n)$.  In the case $s=1,$  in view of \r{Eest},  we have
 \be{New-Est1}
 \begin{split}
&  \sup_{[0,T]} \| u(t,\cdot)\|_{H^{2}(\mathbb{R}^n)}+ \sup_{[0,T]} \|\p_tu(t,\cdot)\|_{H^1(\mathbb{R}^n)} \leq  \\
 & CT(\|\varphi\|_{H^{2}(\mathbb{R}^n)}+  \|\psi\|_{H^1(\mathbb{R}^n)}+ \int_0^T \|r(t,\cdot)\|_{H^1(\mathbb{R}^n)}  \, \d t+  \int_0^T \|G(t,\cdot,u)\|_{H^1(\mathbb{R}^n)} \d t),
 \end{split}
\ee
 provided the right hand side is finite.  Let us denote $\kappa(|u|^2) h(t,x,u)= G_1(t,x,u)$,
 \[
 \p_{x_j} G(t,x,u)=  \p_{x_j} (G_1(t,x,u) u)=  (\p_{x_j} G_1)(t,x,u)  u+ (\p_{u} G_1)(t,x,u) u \p_{x_j} u+ G_1(t,x,u) \p_{x_j} u
  \]
  Again using that $u (\p_uG_1)(t,x,u)$, $ (\p_{x_j} G_1)(t,x,u)$,  and $G_1(t,x,u)$ are bounded, it follows  that there exists $C_1=C_1(T,\rho)$ such that 
  \[
\int_0^T \|G(t,\cdot,u)\|_{H^1(\mathbb{R}^n)} \d t \leq C_1 \int_0^T \| u(t,\cdot)\|_{H^{1}(\mathbb{R}^n)}\,\d t.
  \]
    If we denote
   \[
 \mathcal{E}_2(t)= \| u(t,\cdot)\|_{H^{2}(\mathbb{R}^n)}+\|\p_tu(t,\cdot)\|_{H^1(\mathbb{R}^n)},
   \]
  and recall the definition of $E_2$ from \r{estimate1},   then for any $t\in [0,T]$, 
  \[
  \mathcal{E}_2(t)\leq  CT E_2  + C_1 T \int_0^t \mathcal{E}_2(s) \, \d s,
  \]
  and again from \r{GRW},
  \[
  \mathcal{E}_2(t) \leq C(T) E_2  .
  \]
  and this proves \r{estimate1} for $s=1$.

   The general case follows from the formula
  \be{diff-g}
  \partial_{x}^\alpha (G_1(t,x,u) u)= C_0(t,x,u) u+ \sum_{k=1}^{|\alpha|} \sum_{0<|\beta|\leq |\alpha|} C_\beta(t,x,u) (\partial_{x}^{\beta_1} u) (\partial_{x}^{\beta_2}u) \ldots (\partial_{x}^{\beta_k} u),
  \ee
  where $\beta$ is a collection of multi-indices, $\beta=(\beta_1, \ldots, \beta_k)$,  $\beta_j \in \mathbb{N}^{n+1}$,  $|\beta|=|\beta_1|+ \ldots |\beta_k|$,  and $C_\beta(t,x,u)$ is a function involving derivatives of $G_1(t,x,u)$.  We have shown this is true for $|\alpha|=1$, and the general case  can be proved by induction.  So we have
 \be{aux-1}
\| G_1(t,\cdot,u) u\|_{H^s(\mathbb{R}^n)} \leq C \|u\|_{L^2(\mathbb{R}^n)} + C\sum_{k=1}^{|\alpha|} \sum_{0<|\beta|\leq |\alpha|} \|(\partial_{x}^{\beta_1} u) (\partial_{x}^{\beta_2}u) \ldots (\partial_{x}^{\beta_k} u)\|_{L^2(\mathbb{R}^n)}.
\ee

   We also need the Gagliardo-Nirenberg inequality, see for example \cite{Fiorenza}:  For $|\alpha|\leq m$,
\be{GN}
\| \partial_x^\beta u\|_{L^{\frac{2m}{|\beta|}}} \leq C \|u\|_{L^\infty}^{1-\frac{|\alpha|}{m}} \left(\sum_{|\gamma|\leq m} \|\partial_x^\gamma u\|_{L^2}\right)^{\frac{|\alpha|}{m}},
\ee
and here we are using that the norms are taken over a compact subset of $\mathbb{R}^n$, determined by the support of $G_1$ in $x$. We then apply  H\"older's inequality to \r{diff-g} in the following way
\[
\begin{split}
 \|( \p_{x}^{\beta_1} u)( \p_{x}^{\beta_2} u) \ldots (\p_{x}^{\beta_k} u)\|_{L^2} & \leq \|\p_{x}^{\beta_1} u \|_{L^{p_1}}\|\p_{x}^{\beta_2} u \|_{L^{p_2}}\ldots \|\p_{x}^{\beta_k}u  \|_{L^{p_k}},  \\
 \text{ with }  p_j=\frac{2m}{|\beta_j|}, & \; m=\sum_{j=1}^k |\beta_j|, \text{ so}  \sum_{j=1}^k \frac{1}{p_j}=\frac12.
\end{split}
\]
 Then \r{GN} gives
\[
\|\p_{x}^{\beta_j} u\|_{L^{p_j}(\mathbb{R}^n)} \leq \|u\|_{L^{\infty}(\mathbb{R}^n)}^{1-\frac{|\beta_j|}{m}} 
\left(\sum_{|\gamma|\leq m} \|\partial_x^\gamma u\|_{L^{2}(\mathbb{R}^n)}\right)^{\frac{|\beta_j|}{m}},
\]
and so we conclude that if $m=\sum_{j=1}^k |\beta_j|$, then
\[
\| (\p_{x}^{\beta_1} u )(\p_{x}^{\beta_2} u )\ldots (\p_{x}^{\beta_k} u)\|_{L^2} \leq \|u\|_{L^{\infty}(\mathbb{R}^n)}
\left(\sum_{|\gamma|\leq m} \|\partial^\gamma u\|_{L^{2}(\mathbb{R}^n)}\right),
\]
and we deduce from  \r{aux-1} that for non-negative integers $s$ there exists $C_s=C_s(T,\rho,s)$ such that 
\be{HSNL}
\|G_1(t,\cdot,u) u\|_{H^s(\mathbb{R}^n)} \leq C_s \|u\|_{H^s(\mathbb{R}^n)},
\ee
and the energy estimate \r{Eest} gives

 \[
 \begin{split}
 &  \sup_{[0,T]} \| u(t,\cdot)\|_{H^{s+1}(\mathbb{R}^n)}+ \sup_{[0,T]} \|\p_tu(t,\cdot)\|_{H^s(\mathbb{R}^n)} \leq  \\
& CT\bigg(\|\varphi\|_{H^{s+1}(\mathbb{R}^n)}+  \|\psi\|_{H^s(\mathbb{R}^n)}+   \int_0^T \|r(t,\cdot)\|_{H^s(\mathbb{R}^n)}  \, \d t+ \int_0^T \|G(t,\cdot,u)\|_{H^s(\mathbb{R}^n)} \d t\bigg),
\end{split}
\]
and if we denote   
   \[
  \mathcal{E}_{s+1}(t)= \| u(t,\cdot)\|_{H^{s+1}(\mathbb{R}^n)}+\|\p_tu(t,\cdot)\|_{H^s(\mathbb{R}^n)}, \\
    \] 
  and use \r{estimate1}, it follows from \r{HSNL} for any $t\in [0,T]$, 
  \[
  \mathcal{E}_{s+1}(t)\leq  CTE_s+ C_s  T\int_0^t \mathcal{E}_{s+1}(\mu) \, \d\mu,
  \]
  and so
  \[
  \mathcal{E}_{s+1}(t) \leq C(T,C_s) E_s  .
  \]
This completes the proof of Theorem~\ref{cut-off}.  
\end{proof}

We  also need a stability estimate for solutions of \eqref{Weq2}.
\begin{theorem}\label{stability} Let $u_j(t,x)$ satisfy \eqref{Weq2} with initial data $(\varphi_j,\psi_j)$,  $j=1,2$ and right hand side $r_j(t,x)$,   
such that $\p_x \varphi\in H^s(\mathbb{R}^n)$ and $\psi  \in H^s(\mathbb{R}^n)$,  and $r_j\in L^1(\mathbb{R}; L^2(\mathbb{R}^n))$,
$s=0$ or $s=1$.  Let
\[
\mathcal{A}_s=  \|\varphi_1-\varphi_2\|_{H^{s+1}(\mathbb{R}^n)} + \|\psi_1-\psi_2\|_{H^s(\mathbb{R}^n)} +
 \int_0^T \|r_1(t,\cdot)-r_2(t,\cdot)\|_{H^s(\mathbb{R}^n)}  \, \d t.
\]
 Then for any dimension $n$, there exists  $C=C(T,\rho,h)$ such that
\be{stability1}
 \sup_{[0,T]} \|u_1(t,\cdot)-u_2(t,\cdot)\|_{H^{1}(\mathbb{R}^n)}+  \sup_{[0,T]} \|\p_t u_1(t,\cdot) -\p_t u_2(t,\cdot)\|_{L^2(\mathbb{R}^n)} \leq  C \mathcal{A}_0.
 \ee
 If $n=2,3$, and $s=1$, as in Theorem ~\ref{cut-off}, we denote
 \[
 E_{j,0}=  \|\varphi_j\|_{H^1(\mathbb{R}^n)} + \|\psi_j\|_{L^2(\mathbb{R}^n)}.
 \]
 In this case, there exists  $C=C(T,\rho, E_{1,0}, E_{2,0})$ such that
 \be{stability2}
 \sup_{[0,T]} \|u_1(t,\cdot)-u_2(t,\cdot)\|_{H^{2}(\mathbb{R}^n)}+  \sup_{[0,T]} \|\p_t u_1(t,\cdot) -\p_t u_2(t,\cdot)\|_{H^1(\mathbb{R}^n)}  \leq C \mathcal{A}_1.
\ee
\end{theorem}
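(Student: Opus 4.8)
The plan is to reduce the whole statement to the linear energy inequality \r{Eest}, applied to the difference $w:=u_1-u_2$, in the same spirit as the uniqueness argument of Theorem~\ref{unique-1} but now tracking the dependence on the data. Writing $G(t,x,u)=\kappa(|u|^2)h(t,x,u)u$, I would first record that $G$ together with all of its derivatives is bounded, because $h$ is compactly supported in $x$ and $\kappa$ cuts off in $u$; hence $\mathcal{G}(t,x,u_1,u_2):=\int_0^1(\partial_uG)(t,x,u_2+\tau(u_1-u_2))\,\d\tau$ is a bounded $C^\infty$ function, compactly supported in $x$, satisfying $G(t,x,u_1)-G(t,x,u_2)=\mathcal{G}(t,x,u_1,u_2)\,w$. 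Then $w$ solves the linear wave equation
\[
w_{tt}-\Delta_g w=(r_1-r_2)-\mathcal{G}(t,x,u_1,u_2)\,w,\qquad w|_{t=0}=\varphi_1-\varphi_2,\quad \partial_tw|_{t=0}=\psi_1-\psi_2,
\]
and Theorem~\ref{cut-off} applied to $u_1$ and $u_2$ places $w$ in the regularity class in which \r{Eest} can be used, so that the right-hand side of the energy estimate is finite.

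For the case $s=0$, which is valid in every dimension, this is essentially the computation in the proof of Theorem~\ref{unique-1} carried out with nonzero data: applying \r{Eest} with $s=0$ and using the uniform bound $\|\mathcal{G}\|_{L^\infty}\le M_1=M_1(h,\kappa,\rho)$ gives, for $\mathcal{M}(t):=\|w(t,\cdot)\|_{H^1(\mathbb{R}^n)}+\|\partial_tw(t,\cdot)\|_{L^2(\mathbb{R}^n)}$,
\[
\mathcal{M}(t)\le CT\,\mathcal{A}_0+CTM_1\int_0^t\mathcal{M}(s)\,\d s,
\]
and Gronwall's inequality \r{GRW} yields \r{stability1}. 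No Sobolev embedding is needed, so $n$ is arbitrary here.

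For $s=1$ with $n=2,3$ I would again apply \r{Eest}, now with $s=1$, so the only new task is to bound $\|\mathcal{G}(t,\cdot,u_1,u_2)\,w(t,\cdot)\|_{H^1}$ by $\mathcal{E}_2(t):=\|w(t,\cdot)\|_{H^2(\mathbb{R}^n)}+\|\partial_tw(t,\cdot)\|_{H^1(\mathbb{R}^n)}$. Writing $\partial_x(\mathcal{G}w)=(\partial_x\mathcal{G})w+\mathcal{G}\,\partial_xw$, the second term is $\le M_1\|w\|_{H^1}$; for the first, the chain rule applied to $\mathcal{G}$ together with boundedness of the second derivatives of $G$ gives the pointwise bound $|\partial_x\mathcal{G}|\le C\bigl(1+|\partial_xu_1|+|\partial_xu_2|\bigr)$, hence
\[
\|(\partial_x\mathcal{G})w\|_{L^2}\le C\|w\|_{L^2}+C\bigl(\|\partial_xu_1\|_{L^2}+\|\partial_xu_2\|_{L^2}\bigr)\|w\|_{L^\infty}.
\]
Here is where $n\le3$ enters: $H^2(\mathbb{R}^n)\hookrightarrow L^\infty(\mathbb{R}^n)$, so $\|w\|_{L^\infty}\le C\|w\|_{H^2}$, while $\|\partial_xu_j\|_{L^2}$ is controlled by a constant depending on $T$, $\rho$ and $E_{j,0}$ (and on $\int_0^T\|r_j\|_{L^2}\,\d t$, which I would absorb into the constant) via the a priori bound \r{estimate1} with $s=0$. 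This produces $\|\mathcal{G}(t,\cdot,u_1,u_2)\,w(t,\cdot)\|_{H^1}\le C(T,\rho,h,E_{1,0},E_{2,0})\,\mathcal{E}_2(t)$, and substituting into \r{Eest} gives $\mathcal{E}_2(t)\le CT\,\mathcal{A}_1+CT\int_0^t\mathcal{E}_2(s)\,\d s$, so \r{GRW} finishes \r{stability2}.

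The one step I expect to be genuinely delicate is exactly the product $(\partial_x\mathcal{G})\,w$: since $\partial_xu_j$ is only controlled in $L^2$ and is not bounded, one cannot estimate this term by pulling an $L^\infty$ norm off $\partial_x\mathcal{G}$; the $L^\infty$ norm has to sit on $w$, which is affordable only because of the embedding $H^2\hookrightarrow L^\infty$ available for $n=2,3$. In higher dimensions one would instead have to run a Moser/Gagliardo--Nirenberg estimate of the type \r{GN} together with higher a priori control of the $u_j$, which is precisely why the theorem is stated for $n=2,3$ only.
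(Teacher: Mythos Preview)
Your proposal is correct and follows essentially the same route as the paper: apply the linear energy estimate \r{Eest} to $w=u_1-u_2$, factor $G(t,x,u_1)-G(t,x,u_2)=\mathcal{G}(t,x,u_1,u_2)\,w$, and close by Gronwall --- using only the boundedness of $\mathcal{G}$ for $s=0$, and for $s=1$ placing the $L^\infty$ norm on $w$ via the Sobolev embedding $H^2\hookrightarrow L^\infty$ (valid for $n\le3$) while controlling $\|\partial_x u_j\|_{L^2}$ through the a priori estimate of Theorem~\ref{cut-off}. Your identification of the delicate term $(\partial_x\mathcal{G})\,w$ and the reason for the dimensional restriction matches the paper's exactly.
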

\begin{proof}  In the case $s=0$,  let
\[
\mathcal{B}_0(t)= \|u_1(t,\cdot)- u_2(t,\cdot)\|_{H^{1}(\mathbb{R}^n)} + \|\p_tu_1(t,\cdot)-\p_t u_2(t,\cdot)\|_{L^2(\mathbb{R}^n)}.
\]
We deduce from \r{Eest} that
\[
\begin{split}
\mathcal{B}_0(t)  &\leq 
 CT\|\varphi_1-\varphi_2\|_{H^1(\mathbb{R}^n)} + CT\|\psi_1-\psi_2\|_{L^{2}(\mathbb{R}^n)}+ CT\int_0^T \|r_1(t,\cdot)-r_2(t,\cdot)\|_{L^2(\mathbb{R}^n)}  \, \d t  \\
& \quad+CT\int_0^T \|G(t,\cdot,u_1)- G(t,\cdot,u_2)\|_{L^2(\mathbb{R}^n)}\, \d t.
\end{split}
\]
But since $G(t,x,u_1)-G(t,x,u_2)= \mathcal{G}(t,x,u_1,u_2) (u_1-u_2)$ and $\mathcal{G}$ is bounded, it  follows that
\[
\mathcal{B}_0(t)\leq  CT\mathcal{A}_0 + C T \int_0^t \mathcal{B}_0(r)\, \d r,
\]
and therefore follows from Gronwall's inequality  \r{GRW} that
\[
\mathcal{B}_0(t)\leq C(T) \mathcal{A}_0 ,
\]
which proves the first  inequality  in \r{stability1}.

In the case $s=1$,  again energy estimates \r{Eest} give
\[
\begin{split}
 \mathcal{B}_1(T)&=  \|u_1(T,\cdot)-u_2(T,\cdot)\|_{H^{2}(\mathbb{R}^n)}+ \|\p_tu_1(T,\cdot)-\p_t u_2(T,\cdot)\|_{H^{1}(\mathbb{R}^n)}  \\
& \leq CT\mathcal{A}_1+ CT \int_0^T \|G(t,\cdot,u_1)- G(t,\cdot,u_2)\|_{H^1(\mathbb{R}^n)}\, \d t.
\end{split}
\]
But since $G(t,x,u_1)-G(t,x,u_2)= \mathcal{G}(t,x,u_1,u_2)(u_1-u_2)$,  if follows that
\[
\begin{split}
& \p_{x_j} (G(t,x,u_1)-G(t,x,u_2))= (\p_{x_j} \mathcal{G})(t,x,u_1,u_2)(u_1-u_2)+ \mathcal{G}(t,x,u_1,u_2)\p_{x_j}(u_1-u_2) \\
&\quad+(\p_{u_1} \mathcal{G})(t,x,u_1,u_2)(\p_{x_j} u_1) (u_1-u_2)+ (\p_{u_2} \mathcal{G})(t,x,u_1,u_2)(\p_{x_j} u_2) (u_1-u_2).
\end{split}
\]
Therefore
\[
\begin{split}
 & \|G(t,\cdot,u_1)- G(t,\cdot,u_2)\|_{H^1(\mathbb{R}^n)}\leq C \|u_1-u_2\|_{H^1(\mathbb{R}^n)} \\
 &\quad+ C \|u_1\|_{H^1(\mathbb{R}^n)}\|u_1-u_2\|_{L^\infty(\mathbb{R}^n)}+ C \|u_2\|_{H^1(\mathbb{R}^n)}\|u_1-u_2\|_{L^\infty(\mathbb{R}^n)}.
 \end{split}
 \]    
 We know from Theorem \ref{cut-off} that 
 \[
 \|u_j\|_{H^1(\mathbb{R}^n)}\leq C_j E_{0,j}, \;\ j=1,2.
 \]
 Since $n\leq 3$, the Sobolev Embedding Theorem gives that  for $t$ fixed, 
 \[
 \|u_1(t,\cdot )-u_2(t,\cdot)\|_{L^\infty(\mathbb{R}^n)}\leq  C(n) \|u_1(t,\cdot)-u_2(t,\cdot)\|_{H^2(\mathbb{R}^n)},
 \]
 and therefore we find that there exists $C=C(T, \rho, E_{1,0}, E_{2,0})$, such that
\[
  \|G(t,\cdot,u_1)- G(t,\cdot,u_2)\|_{H^1(\mathbb{R}^n)}\leq  C  \|u_1-u_2\|_{H^2(\mathbb{R}^n)}.
 \]  
and so for any $t\in [0,T],$
\[
\mathcal{B}_1(T)\leq CT  \mathcal{A}_1+ CT\int_0^T \mathcal{B}_1(t)\, \d t,
\]
and it follows from \r{GRW} that 
\[
\mathcal{B}_1(t)\leq C(T)\mathcal{A}_1 .
\]
This proves the theorem.
\end{proof}

 Now we discuss properties of solutions of the more general case, 
\be{Weq1}
\begin{split}
 u_{tt}-\Delta_g u &+ f(t,x,u)=0, \\
u(0,x)= \varphi(x) &,  \;\ u_t(0,x)=\psi(x)
\end{split}
\ee
 established in several degrees of generality in \cite{Grillakis,Kap1,Kap2,Ginibre-Velo,ShStruwe1,ShStruwe2}.  We assume that $f: [0,T] \times \mathbb{R}^n \times \mathbb{C} \longmapsto \mathbb{C}$,  is continuous and $f(t,x,0)=0$.   In general, as shown in \cite{Keller},  such equations have solutions that blow-up at a finite time, so to obtain existence, uniqueness and regularity of solutions of \r{Weq1}, we need to make additional assumptions about the behavior of $f(t,x,u)$ for $u\sim 0$ and for $u\sim \infty$, uniformly on $x$ and $t$.  We follow the work of Kapitanskii \cite{Kap2}, and we pick $\kappa \in C^\infty(\mathbb{R})$ such that  $\kappa(s)=1$ if $|s|>2$ and $\kappa(s)=0$ if $|s|<1$ and define
\[
F_0(t,x,u)= (1-\kappa(|u|^2)) f(t,x,u) \text{ and }  F_1(t,x,u)= \kappa(|u|^2) f(t,x,u).
\]

We assume that for $T>0$, 
\begin{enumerate}[(H1)]
\item $\int_{[0,T]\times \mathbb{R}^n} |F_0(t,x,u)|\, \d x \,\d t= I(u)<\infty$, 
\item  $|F_0(t,x,u)- F_0(t,x,v)| \leq C |u-v|$, for all   $t\in [0,T]$, and for all $x\in \mathbb{R}^n$,  \\
\item For any function $u \in L^{\infty}([0,T]; H^{s+1}(\mathbb{R}^n))$, $s>0$,  $F_0(t,s,u)\in L^1([0,T]; H^s(\mathbb{R}^n))$, and for
$u_1, u_2\in L^{\infty}([0,T]; H^{s+1}(\mathbb{R}^n))$, we have
\[
\int_0^T \|F_0(t,\cdot,u_1(t,\cdot))- F_0(t,\cdot,u_2(t,\cdot))\|_{H^s(\mathbb{R}^n)}\, \d t \leq C(T)\Big( 1+ \sup_{[0,T]} \|u_1-u_2\|_{H^{s+1}(\mathbb{R}^n)}\Big)
\]
with $C(T) \rightarrow 0$ when $T\rightarrow 0$.  Since $H^s(\mathbb{R}^n)\cap  L^\infty(\mathbb{R}^n)$ is closed under the composition with $C^\infty$ functions (see for example \cite{KaPo}), this holds if for example, $F_0(t,x,u)$ is $C^\infty$.
\end{enumerate}

We also need to make some assumptions on the growth of $f(t,x,u)$ for $u\sim \infty$.  We assume there exists
 $p\in [1, \frac{n+2}{n-2}]$ 
 such that for all $t\in [0,T]$ and  $x\in \mathbb{R}^n$, we have
\begin{enumerate}[(F1)]
\item $|F_1(t,x,z)|\leq C |z|^p$, \\
\item $|F_1(t,x,z_1)-F_1(t,x,z_2)| \leq C ( |z_1|^{p-1}+ |z_2|^{p-1}) |z_1-z_2|$, \\
\item we have
\[
\begin{split}
|(\partial_z F_1)(t,x,z_1)-(\partial_z F_1)(t,x,z_2)| & \leq C ( |z_1|^{p-1}+ |z_2|^{p-1}) |z_1-z_2|, \text{ and }\\
|(\partial_{\overline{z}}F_1)(t,x,z_1)-(\partial_{\overline{z}}F_1)(t,x,z_2)| & \leq C ( |z_1|^{p-1}+ |z_2|^{p-1}) |z_1-z_2|,
\end{split}
\] 
\item $|(\partial_{x_j}F_1)(t,x,z_1)-(\partial_{x_j}F_1)(t,x,z_2)| \leq C ( |z_1|^{p-1}+ |z_2|^{p-1}) |z_1-z_2|$, \\
\item there exists  $\delta>0$ such that,  provided $|x-y|<\delta$,
\[ 
\begin{split}|F_1(t,x,z)-F_1(t,y,z)| & \leq C(\delta) |z|^p |x-y| \text{ and } \\
 |(\partial_xF_1)(t,x,z)-(\partial_xF_1)(t,y,z)| & \leq C(\delta) |z|^p |x-y|,
 \end{split}
\]
\item there exists $\mathcal{H}(t,x,z): [0,T] \times \mathbb{R}^n \times \mathbb{C}$,  such that $\mathcal{H}(t,x,z)\geq 0$, and  such that
\[ 
F_1(t,x,z)= \frac{\partial}{\partial \overline{z}} \mathcal{H}(t,x,z);
\]
and there exists a constant $C$ such that for all $t\in [0,T]$ and  $x\in \mathbb{R}^n$, we have
\[
\partial_t \mathcal{H}(t,x,y) \leq C \mathcal{H}(t,x,y).
\]
\end{enumerate}

We can think of $\mathcal{H}$ as an energy which a priori can grow no faster than exponentially. 
For example, in the case discussed above, $f(t,x,u)=  \g(x, |u|^2) u$, with $\g$ and compactly supported in $x$.  It is clear that $F_0(t,x,y)$ satisfies the assumptions above, and that if  $\partial_r G(x,r)= \kappa(r^2) r\g(x,r^2)$, $G(x,0)=0$, then
\[ 
\mathcal{H}(t,x,z)= G(x,|z|) \geq 0 \text{ and }  \partial_t \mathcal{H}(t,x,z)=0 \leq \mathcal{H}(t,x,z).
\]
So we need to assume that $G\geq 0$.

We recall the definition of Besov spaces $\mathcal{B}_{\rho,q}^r$, $p,q>0$ and $r\in \mathbb{R}$, and we adopt the convention 
$\mathcal{B}_\rho^r:= \mathcal{B}_{\rho,2}^r$.  Let $\psi \in C_0^\infty(\mathbb{R}^n)$, $\psi(\xi)=1$ if $|\xi|<\frac12$ and $\psi(\xi)=0$ if $|\xi|>1$.  Let $\mathcal{F}$ denote the Fourier transform and for $f\in \mathcal{S}'(\mathbb{R}^n)$, define the operators $S_k(f)$ and $\Delta_k(f)$  as
\[ 
\mathcal{F}(S_k(f))(\xi)= \psi(2^{-k}\xi) \mathcal{F}(f)(\xi), \;\ \Delta_k(f)= S_{k+1} f- S_k (f).
\]
Then
\[
f= S_0(f) + \sum_{k=0}^\infty \Delta_k(f),
\]
and we say that $f\in \mathcal{B}_{\rho,q}^r$, if
\[
\|S_0(f)\|_{L^\rho(\mathbb{R}^n)} + \bigg[\sum_{k=0}^\infty (2^{rk} \|\Delta_k (f)\|_{L^\rho(\mathbb{R}^n)})^q\bigg]^{\frac{1}{q}}<\infty.
\]

\begin{theorem}[\cite{Kap1,Kap2}] \label{thm_K} 
Suppose that  $F_0(t,x,y)$ and $F_1(t,x,y)$ satisfy the hypotheses above and that the initial data $(\varphi,\psi)$ satisfies $\nabla_x \varphi\in L^2(\mathbb{R}^n)$ and $\psi \in L^2(\mathbb{R}^n)$. Then  the nonlinear equation  \r{Weq1} has unique weak solution $u(t,x)$ which satisfies $\nabla_x u, u_t \in C([0,T]; L^2(\mathbb{R}^n))$ and  
$u, u_t \in L^q([0,T], \mathcal{B}_\rho^{r}(\mathbb{R}^n))$ with $r\in (\frac{n-3}{2(n-1)}, 1]$ and 
\[
\frac{1}{\rho}=\frac12 -\frac{2(1-r)}{n+1}, \;\  \frac{1}{q}=(1-r)\frac{n-1}{n+1}.
\]
   Moreover, if  $s<p$ for $p<2$, or if $s\in (0,2]$, for $p\geq 2$,  and $\varphi\in H^{s+1}(\mathbb{R}^n)$ and $\psi \in H^s(\mathbb{R}^n)$, then in fact 
$u, u_t \in C([0,T]; H^s(\mathbb{R}^n))$ and  
$u, u_t \in L^q([0,T], \mathcal{B}_\rho^{r+s}(\mathbb{R}^n))$.
\end{theorem}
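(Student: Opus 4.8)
The plan is to decompose $f=F_0+F_1$ exactly as in the hypotheses and treat the bounded, Lipschitz part $F_0$ by the energy/Gronwall machinery already used in Theorems~\ref{unique-1}--\ref{stability}, and the power-growth part $F_1$ (with $1\le p\le\frac{n+2}{n-2}$) by Strichartz estimates. First I would record the relevant Strichartz inequalities for the variable-coefficient, asymptotically Euclidean operator $\partial_t^2-\Delta_g$: for the admissible triple $(q,\rho,r)$ with $\frac1\rho=\frac12-\frac{2(1-r)}{n+1}$, $\frac1q=(1-r)\frac{n-1}{n+1}$ one has, on $[0,T]$,
\[
\|u\|_{L^q([0,T];\mathcal{B}_\rho^{r})}\le C_T\Big(\|\nabla_x\varphi\|_{L^2}+\|\psi\|_{L^2}+\int_0^T\|(\partial_t^2-\Delta_g)u(t,\cdot)\|_{L^2}\,\d t\Big),
\]
alongside the energy estimate \r{Eest}. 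Since $g$ is smooth and flat outside a compact set, this follows from the flat-space Strichartz estimates by a parametrix/perturbation argument, treating the metric perturbation and its derivatives as a compactly supported lower-order error.

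Next I would set up a contraction in the space $X_T=\{u:\ \nabla_x u,\,u_t\in C([0,T];L^2),\ u,\,u_t\in L^q([0,T];\mathcal{B}_\rho^{r})\}$, with the norm dictated by the two displayed estimates, for the map $u\mapsto\Phi u$ that solves $(\partial_t^2-\Delta_g)\Phi u=-F_0(t,x,u)-F_1(t,x,u)$ with the prescribed Cauchy data. The $F_0$-contribution is controlled via (H1)--(H2): by (H2) it is Lipschitz in $u$ with constant independent of $u$, and the energy estimate supplies a factor $\to0$ as $T\to0$ (exactly as in Theorem~\ref{cut-off}). The $F_1$-contribution is handled by the nonlinear Strichartz estimate together with (F1)--(F2) and the fractional Leibniz/chain rule in Besov spaces, producing a factor that is small for small $T$ in the subcritical range $p<\frac{n+2}{n-2}$, and small for data concentrated at a suitably chosen spatial scale in the critical case. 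This yields a unique local solution in $X_{T_1}$; uniqueness on the whole interval then comes from the same Strichartz$+$Gronwall estimate applied to the difference of two solutions, using (F2)--(F3).

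To extend the solution to all of $[0,T]$ I would use the a priori energy bound coming from assumption (F6). Multiplying the equation by $\bar u_t$, integrating, and using $F_1=\partial_{\bar z}\mathcal H$ with $\mathcal H\ge0$ and $\partial_t\mathcal H\le C\mathcal H$, together with (H1) for the $F_0$ piece, gives
\[
\frac{\d}{\d t}\Big(\tfrac12\!\int\!(|u_t|^2+|\nabla_x u|^2)\,\d x+\!\int\!\mathcal H(t,x,u)\,\d x\Big)\le C\,E(t)+\|F_0(t,\cdot,u)\|_{L^2}\|u_t\|_{L^2},
\]
so by \r{GRW} the quantity $\|\nabla_x u(t)\|_{L^2}+\|u_t(t)\|_{L^2}$ stays bounded on $[0,T]$, and the local solution continues by iterating over subintervals $[jT_1,(j+1)T_1]$ as in Theorem~\ref{cut-off}. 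For the higher-regularity statement I would differentiate the equation, use that $H^s\cap L^\infty$ is closed under composition with the smooth functions built from $F_0$ (hypothesis (H3)) together with Moser/Gagliardo--Nirenberg product estimates (as in the proof of Theorem~\ref{cut-off}), estimate the $F_1$-term in $\mathcal{B}_\rho^{r+s}$ using (F3)--(F5), and close one more Gronwall inequality.

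The main obstacle is the energy-critical endpoint $p=\frac{n+2}{n-2}$: there the nonlinear term in the Strichartz estimate scales precisely like the solution norm, so local solvability cannot be obtained by shrinking $T$ alone. One must instead combine finite speed of propagation with a non-concentration / small-energy-on-small-balls argument (in the spirit of Struwe, Grillakis, Shatah--Struwe, and Kapitanskii) to get a local solution, and only then use the a priori energy bound for global continuation; this is the technical core of \cite{Kap1,Kap2}. The variable coefficients add a secondary, milder difficulty, absorbed into the perturbative construction of the Strichartz parametrix.
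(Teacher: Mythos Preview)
The paper does not prove this theorem; it is stated as a citation of Kapitanski\u{\i}'s work (Theorem~0.10 of \cite{Kap2}), with the remark that the Euclidean, pure-power case is due to Shatah--Struwe and Grillakis. So there is no ``paper's own proof'' to compare your proposal against.

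That said, your outline is a reasonable high-level summary of the strategy in \cite{Kap1,Kap2}: the splitting $f=F_0+F_1$, contraction in a Strichartz-type space for the power part, energy/Gronwall for the bounded Lipschitz part, the a~priori bound from the Lyapunov functional built out of $\mathcal H$ in (F6), and---crucially---the non-concentration/small-energy argument at the critical exponent $p=\frac{n+2}{n-2}$. One point to be careful about: your Strichartz inequality as written places the forcing in $L^1_tL^2_x$, but to close the contraction for the nonlinear term $F_1(u)$ at or near the critical exponent one typically needs the dual Strichartz norm $L^{q'}_t\mathcal{B}^{r'}_{\rho'}$ (or a suitable endpoint variant), not merely $L^1_tL^2_x$; the $L^1_tL^2_x$ estimate alone will not absorb $|u|^{p-1}u$ in the critical regime. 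You correctly flag the critical case as the main difficulty, but the resolution there is substantially more delicate than a sketch suggests, and in the variable-coefficient setting the Strichartz estimates themselves are nontrivial (this is part of what \cite{Kap1,Kap2} establish). For the purposes of this paper, the theorem is simply quoted and used as a black box.
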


In this generality, this result is due to Kapitanskii (Theorem 0.10 of \cite{Kap2}), but the case where $g$ is the Euclidean metric and $f(t,x,u)= |u|^{p-1} u$ is due to Shatah and Struwe \cite{ShStruwe1,ShStruwe2} and Grillakis \cite{Grillakis}.



\end{document}